\long\def\@makecaption#1#2{%
  \vskip\abovecaptionskip\footnotesize
  \sbox\@tempboxa{#1. #2}%
  \ifdim \wd\@tempboxa >\hsize
    #1. #2\par
  \else
    \global \@minipagefalse
    \hb@xt@\hsize{\hfil\box\@tempboxa\hfil}%
  \fi
  \vskip\belowcaptionskip}
\newcommand{\todo}[1][\null]{\ensuremath{\clubsuit}}
\newcommand{\noprint}[1]{}
\newcommand{\checked}[1][\null]{\ensuremath{\boldsymbol{\surd}}}
\newcommand{\p}{\partial}
\newcommand{\const}{\mathop{\rm const}\nolimits}
\newcommand{\rk}{\mathop{\rm rk}}
\newcommand{\ri}{\mathfrak r}
\newtheorem{theorem}{Theorem}
\newtheorem{lemma}[theorem]{Lemma}
\newtheorem{corollary}[theorem]{Corollary}
\newtheorem{proposition}[theorem]{Proposition}
\newtheorem*{problem*}{Problem}
{\theoremstyle{definition}

\newtheorem{remark}[theorem]{Remark}
\newtheorem*{remark*}{Remark}
}
\begin{document}

\par\noindent {\LARGE\bf
Extended symmetry analysis\\ of an isothermal no-slip drift flux model
\par}

\vspace{3mm}\par\noindent {\large Stanislav Opanasenko$^{\dag\ddag}$, Alexander Bihlo$^\dag$, Roman O.\ Popovych$^{\ddag\S\natural}$ and Artur Sergyeyev$^\natural$
}

\vspace{3mm}\par\noindent {\it
$^{\dag}$~Department of Mathematics and Statistics, Memorial University of Newfoundland,\\
$\phantom{^{\dag}}$~St.\ John's (NL) A1C 5S7, Canada\par
}
\vspace{2mm}\par\noindent {\it
$^\ddag$~Institute of Mathematics of NAS of Ukraine, 3 Tereshchenkivska Str., 01024 Kyiv, Ukraine\par
}
\vspace{2mm}\par\noindent {\it
$^{\S}$~Fakult\"at f\"ur Mathematik, Universit\"at Wien, Oskar-Morgenstern-Platz 1, 1090 Wien, Austria%
}
\vspace{2mm}\par\noindent {\it
$^{\natural}$~Mathematical Institute, Silesian University in Opava, Na Rybn\'\i{}\v{c}ku 1, 746 01 Opava,\\
$\phantom{^{\natural}}$~Czech Republic
}

\vspace{2mm}\par\noindent
\textup{E-mail:} sopanasenko@mun.ca, abihlo@mun.ca, rop@imath.kiev.ua, artur.sergyeyev@math.slu.cz\!
\par

\vspace{7mm}\par\noindent\hspace*{10mm}\parbox{140mm}{\small
We perform extended group analysis for a system of differential equations modeling an isothermal no-slip drift flux.
The maximal Lie invariance algebra of this system is proved to be infinite-dimensional.
We also find the complete point symmetry group of this system, including discrete symmetries,
using the megaideal-based version of the algebraic method.
Optimal lists of one- and two-dimensional subalgebras of the maximal Lie invariance algebra in question
are constructed and employed for obtaining reductions of the system under study.
Since this system contains a subsystem of two equations that involves only two of three dependent variables,
we also perform group analysis of this subsystem.
The latter can be linearized by a composition of a fiber-preserving point transformation with
a two-dimensional hodograph transformation to the Klein--Gordon equation.
We also employ both the linearization and the generalized hodograph method
for constructing the general solution of the entire system under study.
We find {\em inter alia} genuinely generalized symmetries for this system and present the connection between them
and the Lie symmetries of the subsystem we mentioned earlier.
Hydrodynamic conservation laws and their generalizations are also constructed.
\looseness=-1
}\par\vspace{3mm}

\noprint{

Keywords:
hydrodynamic-type system,
isothermal no-slip drift flux,
point symmetry,
exact solutions,
generalized symmetry
conservation law,

MSC: 76M60 (Primary) 37K05, 35B06, 35C05 (Secondary)

Highlights

We perform extended group analysis for a model of an isothermal no-slip drift flux.

Its maximal Lie invariance algebra is proved to be infinite-dimensional.

Its complete point symmetry group is found using an original algebraic method.

The set of its local solutions is completely described via linearizing its subsystem.

First-order generalized symmetries and hydrodynamic conservation laws are computed.

}

\section{Introduction}\label{sec:IDFMIntroduction}

In the course of solving problems in physics one often faces systems of first-order quasilinear
differential equations, that is, first-order systems which are linear in the first-order
derivatives of the dependent variables but whose coefficients at these derivatives may in general depend on
the dependent and independent variables. Such systems frequently occur in acoustics, fluid mechanics,
gas and shock dynamics~\cite{Whitham1999}, and for the case of two independent variables have the general form
\begin{gather}\label{qls}
\sum_{j=1}^n A^{ij}\frac{\p u^j}{\p t}+\sum_{j=1}^n B^{ij}\frac{\p u^j}{\p x}+C^i=0,\quad i=1,\dots,n,
\end{gather}
where the $n\times n$ matrices $A=(A^{ij})$, $B=(B^{ij})$ and the $n$-component vector $C=(C^i)$
are functions of independent variables $(t,x)$ and dependent variables $(u^1,\dots,u^n)$ but not of the derivatives of the latter.
Such systems and their natural generalizations to the case of more than two independent variables are known as
(translation-noninvariant nonhomogeneous) hydrodynamic-type systems and are a subject of intense research,
see for example~\cite{BlaszakSergyeyev2009,BurdeSergyeyev2013,GrundlandHariton2008,GrundlandHuard2007,GSW, OS2013, Pavlov2003, AS17, AS18}
and references therein.

An important class of such systems is given by evolutionary translation-invariant systems of hydrodynamic type in two independent variables,
for which $A$ is the $n\times n$ unit matrix, the vector~$C$ vanishes,
and the matrix~$B$ depends only on dependent variables.
For the sake of brevity in what follows we shall refer to such systems just as to the hydrodynamic-type systems.\looseness=-1

\looseness=-2
If a hydrodynamic-type system can be diagonalized by a change~$\ri=\ri(u)$ of dependent variables alone,
that is, $\tilde B^{ij}=0$ for the new matrix~$\tilde B$ if~$i\ne j$
and $V^i:=\tilde B^{ii}$ are eigenvalues of the matrix $B$,
then the new dependent variables are called \emph{Riemann invariants} of this system,
and the eigenvalues~$V^1$, \dots, $V^n$ are commonly referred to as the characteristic velocities of the system, cf.\ e.g.\ \cite{RozhdestvenskiiJanenko1983}.
Note that in general hydrodynamic-type systems in more than two dependent variables are not diagonalizable.
A diagonalizable hydrodynamic-type system is called genuinely nonlinear if~$V^i_i\neq 0$ for all $i=1,\dots,n$,
and linearly degenerate if all of these inequalities fail.
Here and below, unless otherwise explicitly stated, the indices~$i$, $j$ and~$k$ run from~1 to~$n$,
and a function subscript like~$i$ denotes the derivative with respect to the $i$th Riemann invariant.

\looseness=-1
In the theory of hydrodynamic-type systems there exists an integrability criterion
involving the generalized hodograph transformation~\cite{Tsarev1985,Tsarev1991}.
This criterion states that a diagonalizable strictly hyperbolic
(strict hyperbolicity means that all characteristic velocities~$V^i$ are real and distinct)
hydrodynamic-type system is integrable in the sense presented below
if and only if the condition
\[
\p_i\frac{V^k_j}{V^j-V^k}=\p_j\frac{V^k_i}{V^i-V^k}
\]
holds for all $i\ne k\ne j$ (no sum over repeated indices here). Such hydrodynamic-type systems are called
\textit{semi-Hamiltonian}~\cite{Tsarev1985,Tsarev1991}
(see e.g.\ \cite[p.~60]{DubrovinNovikov1989} and~\cite{El2014} for further details).
Given a diagonal hydrodynamic-type system \[\ri^i_t+V^i\ri^i_x=0,\]
where $\ri=(\ri^1,\dots,\ri^n)$ is the tuple of its Riemann invariants
and $V=(V^1, \dots, V^n)$ is the \mbox{associated} tuple of characteristic velocities,
the generalized hodograph transformation allows one to locally represent the
general solution of this system (except for the solutions for which~$\ri^i_x=0$ for some~$i$)
in the form
\begin{gather}\label{eq:GeneralizedHodographAnsatz}
x-V^i(\ri)t=W^i(\ri),
\end{gather}
where $W=(W^1,\dots,W^n)$ is the general solution
of the system
\[
\frac{W^i_j}{W^j-W^i}=\frac{V^i_j}{V^j-V^i},\quad i\ne j,\qquad
\]
with the nondegeneracy condition $\det(V^i_jt+W^i_j)\ne0$ (again no sum over repeated indices here),
which guarantees that the ansatz~\eqref{eq:GeneralizedHodographAnsatz} is locally solvable with respect to~$\ri$.

In the rest of the present paper we shall deal with a three-component ($n=3$) hydrodynamic-type system arising in the study of the two-phase flow phenomena.
The problem in question is of great importance
in physics thanks to its applications in several rapidly
developing branches such as nuclear power and chemical
industry~\cite{EvjeFjelde2002,YunParkChung1993}.
In particular, the accurate prediction
of void fraction in the sub-channel under two-phase flow is of fundamental significance.

\looseness=-1
Unfortunately, this problem is quite challenging and therefore various simplified models of two-phase flow phenomena were
developed. One of these is the drift flux model introduced in~\cite{ZuberFindlay1965},
which allows to describe the mixing motion rather than the individual phases.
It was thoroughly studied in~\cite{EvjeFlatten2005,EvjeFlatten2007,EvjeKarlsen2008},
where several sub-models of the drift flux model were found, and, in particular,
the concept of the slip function was introduced. The model with no-slip condition
\begin{subequations}\label{eq:IDFModel}
\begin{gather}
\rho^1_t+u\rho^1_x+u_x\rho^1=0,\label{eq:IDFMInitialEquation1}\\
\rho^2_t+ u\rho^2_x+u_x\rho^2=0,\label{eq:IDFMInitialEquation2}\\
(\rho^1+\rho^2)(u_t+uu_x)+a^2(\rho^1_x+\rho^2_x)=0,\label{eq:IDFMInitialEquation3}
\end{gather}
\end{subequations}
where $u=u(t,x)$ is the common velocity in both phases,
$\rho^1=\rho^1(t,x)$ and $\rho^2=\rho^2(t,x)$ are densities of liquids (or of liquid and gas),
and $a$ is a constant depending on both phases,
was considered in~\cite{Banda2010}.
In~\cite{SekharSatapathy2016} an attempt at performing the group analysis for this system was made.
Unfortunately, the said work contains a number of inaccuracies, including an incorrect computation of the maximal
Lie invariance algebra as well as some mistakes in the classification of one-dimensional and two-dimensional subalgebras of the latter.
Also, the system~\eqref{eq:IDFModel} has many nice additional properties as it is of hydrodynamic type,
and the goal of the present paper is to revisit and to significantly extend the group analysis of \eqref{eq:IDFModel}.

To this end it is convenient to simplify the initial model~\eqref{eq:IDFModel} by an appropriate change of variables.
First of all, using a simultaneous rescaling of $x$ and $u$ we can set $a=1$ provided that $a$ is positive,
which is justified from the physical point of view.
Introducing the new dependent variables $v=\ln(\rho^1+\rho^2)$ and
$w=\rho^1/\rho^2$ instead of $\rho^1$ and $\rho^2$,
we rewrite the system~\eqref{eq:IDFModel} as the system~$\mathcal S$ which reads
\begin{subequations}\label{eq:IDFMModelForGroupAnalysis}
\begin{gather}
u_t+uu_x+v_x=0,\label{eq:IDFMEquation1}\\
v_t+uv_x+u_x=0,\label{eq:IDFMEquation2}\\
w_t+uw_x=0.\label{eq:IDFMEquation3}
\end{gather}
\end{subequations}
The system~\eqref{eq:IDFMModelForGroupAnalysis} is obviously of hydrodynamic type.
Furthermore, diagonalizing the matrix~$B$ of the representation (\ref{qls}) for the
system~\eqref{eq:IDFMModelForGroupAnalysis} we map the system~\eqref{eq:IDFMModelForGroupAnalysis}
to the system
\begin{subequations}\label{eq:IDFMDiagonalizedSystem}
\begin{gather}
\ri^1_t+(\ri^1+\ri^2+1)\ri^1_x=0,\label{eq:IDFMDEquation1}\\
\ri^2_t+(\ri^1+\ri^2-1)\ri^2_x=0,\label{eq:IDFMDEquation2}\\
\ri^3_t+(\ri^1+\ri^2)\ri^3_x=0\label{eq:IDFMDEquation3}
\end{gather}
\end{subequations}
using the change of dependent variables~$\ri^1=\frac12(u+v)$, $\ri^2=\frac12(u-v)$, $\ri^3=w$.
Thus, $\ri^1$, $\ri^2$ and~$\ri^3$ are the Riemann invariants of the system~\eqref{eq:IDFMModelForGroupAnalysis},
and the Riemann invariants of the initial system~\eqref{eq:IDFModel} are
\[
\ri^1=\frac{u+\ln(\rho^1+\rho^2)}2,\quad \ri^2=\frac{u-\ln(\rho^1+\rho^2)}2,\quad \ri^3=\frac{\rho^1}{\rho^2},
\]
and hence the expressions for the initial dependent variables $(u,\rho^1,\rho^2)$ in terms of the Riemann invariants read
\[
u=\ri^1+\ri^2,\quad
\rho^1=\frac{\ri^3e^{\ri^1-\ri^2}}{\ri^3+1},\quad
\rho^2=\frac{e^{\ri^1-\ri^2}}{\ri^3+1}.
\]
We also readily see that the characteristic velocities of the system~\eqref{eq:IDFMDiagonalizedSystem} are
\begin{gather}\label{eq:IDFMCharacteristicSpeeds}
V^1=\ri^1+\ri^2+1,\quad V^2=\ri^1+\ri^2-1,\quad V^3=\ri^1+\ri^2.
\end{gather}

The system~\eqref{eq:IDFMDiagonalizedSystem} is not genuinely nonlinear as~$V^3_3=0$.
It is strictly hyperbolic, diagonalizable and semi-Hamiltonian, so the generalized hodograph transformation
can be applied here. Nevertheless, it also makes sense to extend the scope and to
study this system within the framework of group analysis.
Note that the subsystem of the first two equations of~\eqref{eq:IDFMDiagonalizedSystem}
coincides with the diagonalized form of the system describing one-dimensional isentropic gas flows with constant sound speed
\cite[Section~2.2.7, Eq.~(16)]{RozhdestvenskiiJanenko1983}.

Throughout the text we switch between the forms~\eqref{eq:IDFMModelForGroupAnalysis}
and~\eqref{eq:IDFMDiagonalizedSystem} of the system~$\mathcal S$.
It is often more convenient to use the diagonalized form for computation,
although many results are more concisely expressed in terms of the variables~$u$, $v$ and~$w$.

The rest of the paper is organized as follows.
In Section~\ref{sec:IDFMLieSymmetries} we compute the maximal Lie invariance algebra of the system~\eqref{eq:IDFMModelForGroupAnalysis}.
Pushing forward the Lie symmetry vector fields by relevant point transformations,
we also present the corresponding algebras for both the initial system~\eqref{eq:IDFModel}
and its diagonalized form~\eqref{eq:IDFMDiagonalizedSystem}.
Section~\ref{sec:IDFMMethodsForCompleteGroupSymmetries} is devoted to some background material
on how to compute the complete point symmetry group,
which contains both Lie symmetries and discrete point symmetries, for a system of differential equations.
The actual computation of the complete point symmetry group of the system~\eqref{eq:IDFMModelForGroupAnalysis}
is presented in Section~\ref{sec:IDFMCompletePointSymmetryGroup}.
The classification of one- and two-dimensional subalgebras
of the maximal Lie invariance algebra of the system~\eqref{eq:IDFMModelForGroupAnalysis}
is presented in Section~\ref{sec:IDFMClassificationSubalgebras}.
Section~\ref{sec:IDFMReductions} contains  the results on Lie reductions based on the optimal lists of inequivalent subalgebras from the previous section.
In Section~\ref{sec:IDFMEssentialSubsystem} we perform group analysis
of the essential subsystem~\eqref{eq:IDFMEquation1}--\eqref{eq:IDFMEquation2} of the system~\eqref{eq:IDFMModelForGroupAnalysis}.
Employing the fact that this subsystem
can be linearized by a two-dimensional hodograph transformation to the telegraph equation,
in Section~\ref{sec:IDFMSolutionsViaEssentialSubsystem} we describe \emph{all} (local) solutions of the system~\eqref{eq:IDFMModelForGroupAnalysis},
where regular solutions are expressed in terms of solutions of the telegraph equation.
In Section~\ref{sec:IDFMGeneralizedHodographMethod} we apply the generalized hodograph transformation
for describing the general solution of the diagonalized version~\eqref{eq:IDFMDiagonalizedSystem} of the system~$\mathcal S$.
First-order generalized symmetries of the system~\eqref{eq:IDFMDiagonalizedSystem} and their generalizations
are computed in Section~\ref{sec:IDFM1stOrderGeneralizedSymmetries}.
Hydrodynamic conservation laws for the model under study and their generalizations are given in Section~\ref{sec:IDFMHydrodynamicCLs}.
In the final Section~\ref{sec:IDFMConclusions} we summarize the results of the paper and touch upon some avenues of future research.

\section{Lie symmetries}\label{sec:IDFMLieSymmetries}

In order to compute the maximal Lie invariance algebra of a given system of differential equations
we employ the infinitesimal method~\cite{Olver1993,Ovsiannikov1982}.

The infinitesimal generators of one-parameter Lie symmetry groups for the system~$\mathcal S$
are defined as $Q=\tau\p_t+\xi\p_x+\eta\p_u+\theta\p_v+\zeta\p_w$,
where the components~$\tau$, $\xi$, $\eta$, $\theta$ and $\zeta$ depend on~$t$, $x$, $u$, $v$, and~$w$.
The infinitesimal invariance criterion requires that
\begin{gather}\label{eq:IDFMInfinitesimalInvCriterion}
Q^{(1)}\left(u_t+uu_x+v_x\right)|_{\mathcal S}=0,\quad
Q^{(1)}\left(v_t+uv_x+u_x\right)|_{\mathcal S}=0,\quad
Q^{(1)}\left(w_t+uw_x\right)|_{\mathcal S}=0.
\end{gather}
The first prolongation $Q^{(1)}$ of the vector field~$Q$ is given by
\[
 Q^{(1)}=Q+\eta^{(1,0)}\p_{u_t}+\eta^{(0,1)}\p_{u_x}+\theta^{(1,0)}\p_{v_t}+\theta^{(0,1)}\p_{v_x}+
 \zeta^{(1,0)}\p_{w_t}+\zeta^{(0,1)}\p_{w_x},
\]
where the components~$\eta^{(1,0)}$, $\eta^{(0,1)}$, $\theta^{(1,0)}$, $\theta^{(0,1)}$, $\zeta^{(1,0)}$ and~$\zeta^{(0,1)}$
of the prolonged vector field~$Q^{(1)}$ are readily derived from the general prolongation formula~\cite{Olver1993},
\begin{gather*}
\eta^\alpha  =\mathrm D^\alpha\left(\eta  -\tau u_t-\xi u_x\right)+\tau u_{\alpha+\delta_1}+\xi u_{\alpha+\delta_2},\\
\theta^\alpha=\mathrm D^\alpha\left(\theta-\tau v_t-\xi v_x\right)+\tau v_{\alpha+\delta_1}+\xi v_{\alpha+\delta_2},\\
\zeta^\alpha =\mathrm D^\alpha\left(\zeta -\tau w_t-\xi w_x\right)+\tau w_{\alpha+\delta_1}+\xi w_{\alpha+\delta_2}.
\end{gather*}
Here $\alpha=(\alpha_1,\alpha_2)$ is a multi-index, $\delta_1=(1,0)$, $\delta_2=(0,1)$,
$\mathrm D^\alpha=\mathrm D_t^{\alpha_1}\mathrm D_x^{\alpha_2}$,
$\mathrm D_t$ and $\mathrm D_x$ are the total derivative operators with respect to~$t$ and~$x$,
respectively,
\begin{gather*}
\mathrm D_t=\partial_t+\sum_\alpha(u_{\alpha+\delta_1}\partial_{u_\alpha}+v_{\alpha+\delta_1}\partial_{v_\alpha}+w_{\alpha+\delta_1}\partial_{w_\alpha}),\\
\mathrm D_x=\partial_x+\sum_\alpha(u_{\alpha+\delta_2}\partial_{u_\alpha}+v_{\alpha+\delta_2}\partial_{v_\alpha}+w_{\alpha+\delta_2}\partial_{w_\alpha})
\end{gather*}
with $u_\alpha=\p^{\alpha_1+\alpha_2}u/\p t^{\alpha_1}\p x^{\alpha_2}$, etc.
Thus, the infinitesimal invariance criterion implies that
\begin{gather*}
\eta^{(1,0)}  +u\eta^{(0,1)}   +\eta u_x + \theta^{(0,1)}=0,\\
\theta^{(1,0)}+u\theta^{(0,1)} +\eta v_x + \eta^{(0,1)}=0,\\
\zeta^{(1,0)}+u\zeta^{(0,1)} +\eta w_x  =0
\end{gather*}
when substituting $u_t=-uu_x-v_x$, $v_t=-uv_x-u_x$ and $w_t=-uw_x$.
Splitting with respect to the parametric derivatives $u_x$, $v_x$ and~$w_x$
results in the system of determining equations for the components of Lie symmetry vector fields,
\begin{gather*}
\zeta_t=\zeta_x=\zeta_u=\zeta_v=\tau_x=\tau_u=\tau_v=\tau_w=\eta_u=\eta_v=\eta_w=\theta_u=\theta_v=\theta_w=0,\\
\xi_x=\tau_t,\quad\xi_t=\eta,\quad\eta_t+u\eta_x+\theta_x=0,\quad\theta_t+u\theta_x+\eta_x=0.
\end{gather*}
The general solution of this system is
\begin{gather*}
\tau=\xi^1t+\tau^0,\quad\xi=\eta^0t+\xi^1x+\xi^0,\quad\eta=\eta^0,\quad\theta=\theta^0,\quad\zeta=\Omega(w),
\end{gather*}
where $\tau^0$, $\xi^0$, $\xi^1$, $\eta^0$, $\eta^1$ and $\theta^0$ are arbitrary constants
and $\Omega$ is an arbitrary smooth function of its argument.
This proves the following theorem.

\begin{theorem}
The maximal Lie invariance algebra~$\mathfrak g$ of the system~$\mathcal S$ is infinite-dimensional
and spanned by the vector fields
\begin{gather}\label{eq:IDFMSymmetryOperatorsSystem3}
\mathcal D=t\p_t+x\p_x,\quad
\mathcal G=t\p_x+\p_u,\quad
\mathcal P^t=\p_t,\quad
\mathcal P^x=\p_x,\quad
\mathcal P^v=\p_v,\quad
\mathcal W(\Omega)=\Omega(w)\p_w,
\end{gather}
where $\Omega$ runs through the set of smooth functions of $w$.
\end{theorem}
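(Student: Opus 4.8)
The plan is to apply the infinitesimal invariance method directly, following the setup already in place. First I would take the general vector field $Q=\tau\p_t+\xi\p_x+\eta\p_u+\theta\p_v+\zeta\p_w$ with coefficients depending only on $(t,x,u,v,w)$ and compute its first prolongation $Q^{(1)}$ from the standard formula; since $\mathcal S$ is a first-order system, no higher prolongation is needed. Applying $Q^{(1)}$ to each of the three equations and restricting to the solution manifold, i.e.\ substituting $u_t=-uu_x-v_x$, $v_t=-uv_x-u_x$ and $w_t=-uw_x$, yields three expressions that, once the prolonged coefficients are expanded, are polynomial in the parametric derivatives $u_x$, $v_x$ and $w_x$.

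Next I would split these identities with respect to the monomials in $u_x$, $v_x$ and $w_x$. This is the decisive step: because the coefficients $\tau,\dots,\zeta$ are independent of the derivatives, the coefficient of each distinct monomial must vanish separately, producing the overdetermined linear system of determining equations displayed above. The only point demanding care here is bookkeeping, namely making sure no monomial is overlooked; no genuine difficulty arises, since the system is first order and the prolongation is linear in the derivatives.

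Solving the determining equations then proceeds hierarchically. The equations $\zeta_t=\zeta_x=\zeta_u=\zeta_v=0$ immediately force $\zeta=\Omega(w)$ with $\Omega$ an arbitrary smooth function, which is precisely the feature responsible for the infinite-dimensionality of $\mathfrak g$; I would flag this as the conceptually significant output rather than a technical hurdle. For the remaining components, the equations $\eta_t+u\eta_x+\theta_x=0$ and $\theta_t+u\theta_x+\eta_x=0$ must hold identically in $u$, so splitting off the coefficient of $u$ gives $\eta_x=\theta_x=0$, and the constant terms then give $\eta_t=\theta_t=0$, whence $\eta=\eta^0$ and $\theta=\theta^0$ are constants. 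Feeding $\eta=\xi_t=\eta^0$ and $\xi_x=\tau_t$ back in separates into an $x$-dependence of $\xi$ and a $t$-dependence of $\tau$ sharing a common constant $\xi^1$, yielding $\tau=\xi^1t+\tau^0$ and $\xi=\eta^0t+\xi^1x+\xi^0$.

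Finally, reading the general solution as a linear combination over the independent constants $\tau^0,\xi^0,\xi^1,\eta^0,\theta^0$ together with the functional parameter $\Omega$ reproduces the six generators $\mathcal P^t,\mathcal P^x,\mathcal D,\mathcal G,\mathcal P^v$ and $\mathcal W(\Omega)$ in the statement, with the family $\mathcal W(\Omega)$ supplying an infinite-dimensional piece and thereby establishing the infinite-dimensionality claim. I expect the main (and rather modest) obstacle to be purely organizational, namely carrying out the splitting without dropping a determining equation; the substantive content is simply the observation that the third equation decouples enough to admit the arbitrary function $\Omega(w)$.
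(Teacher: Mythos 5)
Your proposal is correct and follows essentially the same route as the paper: the standard Lie infinitesimal method with first prolongation, restriction to the solution manifold via $u_t=-uu_x-v_x$, $v_t=-uv_x-u_x$, $w_t=-uw_x$, splitting with respect to the parametric derivatives to obtain the determining system, and hierarchical integration yielding $\tau=\xi^1t+\tau^0$, $\xi=\eta^0t+\xi^1x+\xi^0$, $\eta=\eta^0$, $\theta=\theta^0$, $\zeta=\Omega(w)$, hence the six generators. Your intermediate steps (splitting in $u$ to get $\eta_x=\theta_x=0$ and then $\eta_t=\theta_t=0$, and the cross-compatibility of $\xi_x=\tau_t$ with $\xi_t=\eta^0$ forcing $\tau_{tt}=0$) are exactly the computations the paper compresses into its stated general solution of the determining equations.
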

\begin{remark}\label{IDFMTrueInvarianceAlgebra}
The systems~\eqref{eq:IDFModel} and~\eqref{eq:IDFMModelForGroupAnalysis} are related
by the point transformation with $t$, $x$ and $u$ unchanged, and
$\rho^1=we^v/(w+1)$, $\rho^2=e^v/(w+1)$.
This transformation pushes forward the algebra~$\mathfrak g$ to
the maximal Lie invariance algebra~$\tilde{\mathfrak g}$ of the initial system~\eqref{eq:IDFModel},
where~$\tilde{\mathfrak g}=\langle\tilde{\mathcal D},\tilde{\mathcal G},\tilde{\mathcal P}^t,\tilde{\mathcal P}^x,
\tilde{\mathcal P}^v,\tilde{\mathcal W}(\tilde\Omega)\rangle$,
where $\tilde{\mathcal D}$, $\tilde{\mathcal G}$, $\tilde{\mathcal P}^t$, $\tilde{\mathcal P}^x$
are formally of the same form as their counterparts in the algebra~$\mathfrak g$,
\[
\tilde{\mathcal P}^v=\rho^1\p_{\rho^1}+\rho^2\p_{\rho^2},\quad
\tilde{\mathcal W}(\tilde\Omega)=\rho^2\tilde\Omega\left(\frac{\rho^1}{\rho^2}\right)(\p_{\rho^1}-\p_{\rho^2}),
\]
and $\tilde\Omega$ runs through the set of the smooth functions of its argument.
Note that the infinite-dimensional part~$\langle\tilde{\mathcal W}(\tilde\Omega)\rangle$
of the algebra~$\tilde{\mathfrak g}$ was missed in~\cite{SekharSatapathy2016}.
\end{remark}

\begin{remark}\label{rem:IDFMDiagonalization}
Likewise, the maximal Lie invariance algebra of the system~\eqref{eq:IDFMDiagonalizedSystem} is spanned by the vector fields
\begin{gather*}
\hat{\mathcal D}=t\p_t+x\p_x,\quad
\hat{\mathcal G}=2t\p_x+\p_{\ri^1}+\p_{\ri^2},\quad 
\hat{\mathcal P}^t=\p_t,\quad
\hat{\mathcal P}^x=\p_x,\quad
\hat{\mathcal P}^v=\p_{\ri^1}-\p_{\ri^2},\\ 
\hat{\mathcal W}(\Omega)=\Omega(\ri^3)\p_{\ri^3},
\end{gather*}
where $\Omega$ runs through the set of smooth functions of $\ri^3$,
and we also rescaled some of the spanning elements.
\end{remark}

\section{Methods of finding the complete point symmetry group}\label{sec:IDFMMethodsForCompleteGroupSymmetries}

Using the Lie infinitesimal method for a system of differential equations~$\mathcal L$
with subsequent generation of finite transformations
allows one to construct the Lie symmetry group of~$\mathcal L$,
which consists of continuous symmetry transformations of the system~$\mathcal L$
and is the identity component of the complete point symmetry group of this system.
At the same time, discrete symmetry transformations are also of interest for applications.
If the Lie symmetry group of~$\mathcal L$ is known,
then finding discrete symmetry transformations of~$\mathcal L$
is equivalent to the construction of the complete point symmetry group of~$\mathcal L$.
There are two main methods for computing complete point symmetry groups of systems of differential equations in the literature,
the direct method~\cite{PopovychBihlo2010,VaneevaJohnpillaiPopovychSophocleous2007,VaneevaPopovychSophocleous2012}
and the algebraic method~\cite{BihloCardosoPopovych2015,BihloPopovych2011,Hydon2000,Hydon2000Book,PopovychCardoso2015}.
Although the latter in general gives only a part of restrictions on the form of point symmetry transformations
and thus also involves computations within the framework of the direct method at the final step of the respective procedure,
it is usually more convenient since, in comparison with using the direct method alone,
the computations are less cumbersome.

The direct method is based on the definition of a (finite) point symmetry transformation and is the most universal one.
The application of this method results in a system of PDEs known as the determining equations which are, in general,
nonlinear and strongly coupled and thus quite difficult to solve. Several methods were
developed to improve the direct method, and one of those improved methods is the algebraic one.


The algebraic method for finding the complete point symmetry
group of a system of differential equations was suggested by Hydon~\cite{Hydon2000,Hydon2000Book}.
The underlying idea is that each point symmetry transformation~$\mathcal T$
of a system of differential equations~$\mathcal L$ induces an automorphism
of the maximal Lie invariance algebra~$\mathfrak g$ of~$\mathcal L$.
If the algebra~$\mathfrak g$ is finite-dimensional with $\dim\mathfrak g=n>0$, then the above means that
\[
\mathcal T_* e_j=\sum\limits_{i=1}^na_{ij}e_i, \quad j=1,\dots, n,
\]
where $\mathcal T_*$ is the pushforward of vector fields induced by~$\mathcal T$,
and $(a_{ij})_{i,j=1}^n$ is the matrix of an automorphism of~$\mathfrak g$
in the chosen basis~$(e_1,\dots,e_n)$.
Computing the automorphism group $\text{Aut}(\mathfrak g)$ of~$\mathfrak g$,
one obtains a set of restrictions on the form of the matrix $(a_{ij})_{i,j=1}^n$.
Expanding the above condition for $\mathcal T_*$ gives, under the assumption that $\mathfrak g\ne\{0\}$,
gives constraints for the transformation~$\mathcal T$.
These constraints are to be subsequently employed within the framework of the direct method.
\looseness=-1

Be that as it may, the computation of the entire automorphism group $\text{Aut}(\mathfrak g)$
can still be challenging, especially if the maximal Lie invariance algebra~$\mathfrak g$ is infinite-dimensional.
This is why another version of the algebraic method,
which involves the notion of megaideals~\cite{PopovychBoykoNesterenkoLutfullin2003},
also known as fully characteristics ideals~\cite{HilgertNeeb2011},
was developed in~\cite{BihloCardosoPopovych2015,BihloPopovych2011,PopovychCardoso2015}.
A megaideal~$\mathfrak i$ of a Lie algebra~$\mathfrak g$ is a subspace of~$\mathfrak g$
that is invariant under any automorphism of~$\mathfrak g$.
Provided~$\mathfrak g$ is finite-dimensional with $\dim\mathfrak g=n$ and possesses a megaideal~$\mathfrak i$
spanned by the first~$k$, $k<n$, basis elements, i.e., $\mathfrak i=\langle e_1,\cdots,e_k\rangle$,
the matrix~$(a_{ij})_{i,j=1}^n$ of any automorphism of~$\mathfrak g$
is of block structure with $a_{ij}=0$ for $i>k$ and $j\leqslant k$.
In other words, the megaideal hierarchy of a finite-dimensional Lie algebra
is directly related to the block structure of matrices of the automorphisms of this Lie algebra.
This observation can also be applied to infinite-dimensional Lie algebras with finite-dimensional megaideals.

Simple tools for constructing megaideals were presented in~\cite{BihloPopovych2011,PopovychBoykoNesterenkoLutfullin2003,PopovychCardoso2015}.
First of all, both the improper subalgebras of a Lie algebra~$\mathfrak g$
(the zero subalgebra and~$\mathfrak g$ itself) are (improper) megaideals of~$\mathfrak g$.
Moreover, sums, intersections and Lie products of megaideals are megaideals,
megaideals of megaideals of~$\mathfrak g$ are megaideals of~$\mathfrak g$,
all elements of the derived series, the ascending and the descending central series of~$\mathfrak g$,
in particular, the center and the derivatives of~$\mathfrak g$,
as well as the radical and the nilradical are megaideals of~$\mathfrak g$.
Recall that the $n$th derivative~$\mathfrak g^{(n)}$
of the Lie algebra~$\mathfrak g$ is defined by
$\mathfrak g^{(n)}=[\mathfrak g^{(n-1)},\mathfrak g^{(n-1)}]$ for $n\geqslant1$ with $\mathfrak g^{(0)}=\mathfrak g$
and the $n$th power~$\mathfrak g^n$ of the algebra~$\mathfrak g$ is
$\mathfrak g^n=[\mathfrak g^{n-1},\mathfrak g]$ for $n\geqslant2$ with
$\mathfrak g^1=\mathfrak g$.
One more, less obvious, way of obtaining new megaideals from known ones is given by the following assertion.

\begin{proposition}\label{prop:IDFMMegaideal}
If  $\mathfrak i_0$, $\mathfrak i_1$ and $\mathfrak i_2$ are megaideals of~$\mathfrak g$,
then the set
$\mathfrak s=\{z\in\mathfrak i_0\colon[\langle z \rangle,\mathfrak i_1]\subseteq\mathfrak i_2\}$
is also a megaideal of~$\mathfrak g$.
\end{proposition}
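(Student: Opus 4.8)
The plan is to verify directly the two defining properties of a megaideal for the set $\mathfrak s$: that it is a linear subspace of~$\mathfrak g$, and that it is invariant under every automorphism of~$\mathfrak g$. First I would reformulate the defining condition: since $[\langle z\rangle,\mathfrak i_1]$ is by definition the span of the brackets $[cz,y]=c[z,y]$ with $c$ scalar and $y\in\mathfrak i_1$, the requirement $[\langle z\rangle,\mathfrak i_1]\subseteq\mathfrak i_2$ is equivalent to asking that $[z,y]\in\mathfrak i_2$ for all $y\in\mathfrak i_1$. The subspace property then follows at once from the bilinearity of the Lie bracket: if $z_1,z_2\in\mathfrak s$ and $c_1,c_2$ are scalars, then $c_1z_1+c_2z_2\in\mathfrak i_0$ because $\mathfrak i_0$ is a subspace, while $[c_1z_1+c_2z_2,y]=c_1[z_1,y]+c_2[z_2,y]\in\mathfrak i_2$ for every $y\in\mathfrak i_1$ because $\mathfrak i_2$ is a subspace; hence $c_1z_1+c_2z_2\in\mathfrak s$.

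The substance of the argument is the invariance step. I would fix an arbitrary automorphism $\phi\in\mathrm{Aut}(\mathfrak g)$ and an element $z\in\mathfrak s$, and show that $\phi(z)\in\mathfrak s$. Since $\mathfrak i_0$ is a megaideal, $\phi(\mathfrak i_0)=\mathfrak i_0$, so $\phi(z)\in\mathfrak i_0$, which settles the ambient-membership part. For the bracket condition I would exploit that $\phi$ is a Lie-algebra homomorphism and fixes each megaideal setwise, computing $[\langle\phi(z)\rangle,\mathfrak i_1]=[\phi(\langle z\rangle),\phi(\mathfrak i_1)]=\phi([\langle z\rangle,\mathfrak i_1])\subseteq\phi(\mathfrak i_2)=\mathfrak i_2$. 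Here the first equality uses the linearity of $\phi$ together with $\phi(\mathfrak i_1)=\mathfrak i_1$, the second uses that $\phi$ respects brackets, the inclusion uses $z\in\mathfrak s$, and the last equality uses that $\mathfrak i_2$ is a megaideal. This yields $\phi(z)\in\mathfrak s$, that is, $\phi(\mathfrak s)\subseteq\mathfrak s$; applying the same inclusion to $\phi^{-1}$ gives the reverse inclusion and hence $\phi(\mathfrak s)=\mathfrak s$ for every $\phi$, completing the proof.

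The step I expect to be the crux, rather than a genuine difficulty, is the bracket manipulation in the invariance part, and specifically the need to invoke the megaideal property of $\mathfrak i_1$ in the form $\phi(\mathfrak i_1)=\mathfrak i_1$. It is precisely this equality that lets $\mathfrak i_1$ appear on \emph{both} sides of $\phi$ so that the hypothesis $z\in\mathfrak s$ can be applied after factoring $\phi$ out of the bracket; were $\mathfrak i_1$ merely an ideal or an arbitrary subspace, one would be forced to write $[\phi(z),\mathfrak i_1]=\phi([\langle z\rangle,\phi^{-1}(\mathfrak i_1)])$, and the defining condition for $\mathfrak s$ would no longer apply to the pulled-back subspace. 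Thus the three megaideals play clearly separated roles: $\mathfrak i_0$ guarantees membership in the correct ambient space, $\mathfrak i_1$ must be fixed so that it survives the passage through $\phi$ unchanged, and $\mathfrak i_2$ must be fixed so that the image of the bracket remains in the prescribed target.
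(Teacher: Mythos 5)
Your proof is correct. The paper itself states this proposition without proof (it is imported from the cited literature on megaideals), and your argument---the subspace property via bilinearity, and automorphism-invariance via $[\langle\phi(z)\rangle,\mathfrak i_1]=\phi([\langle z\rangle,\mathfrak i_1])\subseteq\phi(\mathfrak i_2)=\mathfrak i_2$ using $\phi(\mathfrak i_0)=\mathfrak i_0$, $\phi(\mathfrak i_1)=\mathfrak i_1$, $\phi(\mathfrak i_2)=\mathfrak i_2$, followed by applying the inclusion to $\phi^{-1}$ to get equality---is precisely the standard direct verification intended here, with the roles of the three megaideals correctly identified.
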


\looseness=-1
Within the megaideal-based version of the algebraic method,
one uses the condition $\mathcal T_*\mathfrak i=\mathfrak i$
for each megaideal~$\mathfrak i$ from a certain set of known megaideals of
the maximal Lie invariance algebra~$\mathfrak g$ of the system~$\mathcal L$.
Megaideals that are sums of other megaideals give constraints that are consequences
of constraints that are derived from the consideration of the summands.
This is why such decomposed megaideals should be neglected in the course of the computation.
Upon having derived the constraints for a point symmetry transformation~$\mathcal T$
implied by the megaideal invariance,
one completes the computation of the complete point symmetry group with the direct method.

\section{Complete point symmetry group}\label{sec:IDFMCompletePointSymmetryGroup}

We compute the complete point symmetry group of the system~\eqref{eq:IDFMModelForGroupAnalysis}
using  the megaideal-based version of the algebraic method.

The nonzero commutation relations among generating elements~\eqref{eq:IDFMSymmetryOperatorsSystem3}
of the maximal Lie invariance algebra~$\mathfrak g$ of the system~$\mathcal S$ are exhausted by
\begin{gather*}
[\mathcal P^t,\mathcal D]=\mathcal P^t,\quad
[\mathcal P^x,\mathcal D]=\mathcal P^x,\quad
[\mathcal P^t,\mathcal G]=\mathcal P^x,\quad
[\mathcal W(\Omega^1),\mathcal W(\Omega^2)]=\mathcal W(\Omega^1\Omega^2_w-\Omega^2\Omega^1_w).
\end{gather*}
Therefore, the algebra~$\mathfrak g$ is the direct sum
of its finite-dimensional and infinite-dimensional parts,
$\mathfrak g=\langle\mathcal D,\mathcal G,\mathcal P^t, \mathcal P^x, \mathcal P^v\rangle\oplus\langle\mathcal W(\Omega)\rangle$.
Moreover, the finite-dimensional part can be split into a direct sum as well,
so $\mathfrak g=\langle\mathcal D,\mathcal G,\mathcal P^t, \mathcal P^x
\rangle\oplus\langle\mathcal P^v\rangle\oplus\langle\mathcal W(\Omega)\rangle$.

We now construct a list of megaideals for the algebra~$\mathfrak g$.
First, the derivatives of~$\mathfrak g$ are megaideals of~$\mathfrak g$, so
$\mathfrak g'=\langle\mathcal P^t, \mathcal P^x, \mathcal W(\Omega)\rangle$ and
$\mathfrak g''=\langle\mathcal W(\Omega)\rangle$ are megaideals,
with $\mathfrak g^{(i)}=\mathfrak g''$ for $i\geqslant2$.
The center $\mathcal Z(\mathfrak g)=\langle\mathcal P^v\rangle$ of the algebra~$\mathfrak g$ is also its megaideal.

\begin{lemma}\label{lem:IDFMRadical}
The radical~$\mathfrak r$ of~$\mathfrak g$ coincides with the finite-dimensional part of~$\mathfrak g$,
\[\mathfrak r=\langle\mathcal D,\mathcal G,\mathcal P^t, \mathcal P^x, \mathcal P^v\rangle.\]
\end{lemma}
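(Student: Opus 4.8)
The plan is to show that the finite-dimensional part $\mathfrak r=\langle\mathcal D,\mathcal G,\mathcal P^t,\mathcal P^x,\mathcal P^v\rangle$ is a solvable ideal of $\mathfrak g$ and that the quotient $\mathfrak g/\mathfrak r$ contains no nonzero solvable ideal, so that $\mathfrak r$ is in fact the maximal solvable ideal, i.e.\ the radical. Recall that the radical is by definition the largest solvable ideal of $\mathfrak g$, and that $\mathfrak r$ contains every solvable ideal.

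**First I would** verify that $\mathfrak r$ is an ideal. From the direct-sum decomposition $\mathfrak g=\mathfrak r\oplus\langle\mathcal W(\Omega)\rangle$ already established in the preceding text, together with the listed commutation relations, one sees that $[\mathfrak g,\mathfrak r]\subseteq\mathfrak r$: the bracket of any generator with $\mathcal D,\mathcal G,\mathcal P^t,\mathcal P^x,\mathcal P^v$ stays inside $\mathfrak r$, since the only brackets producing $\mathcal W$-terms are those among the $\mathcal W(\Omega)$ themselves. **Next I would** confirm solvability of $\mathfrak r$: using $[\mathcal P^t,\mathcal D]=\mathcal P^t$, $[\mathcal P^x,\mathcal D]=\mathcal P^x$, $[\mathcal P^t,\mathcal G]=\mathcal P^x$ and the vanishing of all other brackets among these five generators, one computes $\mathfrak r'=\langle\mathcal P^t,\mathcal P^x\rangle$ (abelian) and hence $\mathfrak r''=\{0\}$, so $\mathfrak r$ is solvable of derived length two.

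**The remaining step** is to show maximality: any solvable ideal of $\mathfrak g$ is contained in $\mathfrak r$. The natural route is to examine the quotient $\mathfrak g/\mathfrak r\cong\langle\mathcal W(\Omega)\rangle$ as a Lie algebra. From the bracket $[\mathcal W(\Omega^1),\mathcal W(\Omega^2)]=\mathcal W(\Omega^1\Omega^2_w-\Omega^2\Omega^1_w)$, this is the Lie algebra of smooth vector fields $\Omega(w)\p_w$ on the line in the single variable~$w$. I would argue this algebra is not solvable --- in fact it is perfect, $[\langle\mathcal W(\Omega)\rangle,\langle\mathcal W(\Omega)\rangle]=\langle\mathcal W(\Omega)\rangle$, since for any target $\Psi(w)$ one can realize $\mathcal W(\Psi)$ as such a bracket (e.g.\ writing $\Psi$ as a Wronskian-type combination; concretely $[\mathcal W(1),\mathcal W(\Psi)]=\mathcal W(\Psi_w)$ and similar choices generate everything). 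Consequently $\mathfrak g/\mathfrak r$ has no nonzero solvable ideal, which forces the radical of $\mathfrak g$ to lie in $\mathfrak r$; combined with the fact that $\mathfrak r$ is itself a solvable ideal, this gives equality.

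**The main obstacle** I anticipate is the perfectness claim for the infinite-dimensional part $\langle\mathcal W(\Omega)\rangle$, since one must verify that \emph{every} smooth $\Omega$ is attainable as a bracket and not merely those of a special form; this is where the argument must be made carefully rather than waved through, although it is a standard property of the algebra of vector fields on a one-dimensional manifold. Everything else reduces to the routine bracket computations recorded just above the statement.
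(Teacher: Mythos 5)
Your first two steps are exactly the paper's: the span $\mathfrak s=\langle\mathcal D,\mathcal G,\mathcal P^t,\mathcal P^x,\mathcal P^v\rangle$ is an ideal by the commutation relations, and it is solvable since $\mathfrak s''=\{0\}$, hence it is contained in the radical. The gap is in your maximality step. From perfectness of $\mathfrak g/\mathfrak s\cong\langle\mathcal W(\Omega)\rangle$ you conclude that this quotient ``has no nonzero solvable ideal''. That implication is false: a perfect Lie algebra may well contain nonzero solvable ideals. For example, $\mathfrak{sl}_2(\mathbb R)\ltimes\mathbb R^2$ with the standard action is perfect, yet $\mathbb R^2$ is a nonzero abelian ideal (indeed its radical). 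Perfectness of the quotient only shows that the quotient itself is not solvable; it says nothing about its ideals, so your argument does not force the radical of~$\mathfrak g$ into~$\mathfrak s$. Your quotient framework as such is sound --- a solvable ideal such that the quotient by it has zero radical is the radical --- but the proof that the quotient has zero radical is precisely what is missing.

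What is actually needed, and what the paper invokes, is the stronger property that the ideal generated by any nonzero element $\mathcal W(\Omega^0)$ is the whole of $\langle\mathcal W(\Omega)\rangle$ (simplicity), or at least that every nonzero ideal of $\langle\mathcal W(\Omega)\rangle$ is non-solvable; the paper uses this to get $\mathfrak r\cap\langle\mathcal W(\Omega)\rangle=\{0\}$ directly, without passing to the quotient. Note that your identity $[\mathcal W(1),\mathcal W(\Psi)]=\mathcal W(\Psi_w)$ only lets you bracket with the particular element $\mathcal W(1)$, which you have no right to assume lies in a given ideal --- this is exactly the difference between perfectness and simplicity. The repair is a Wronskian computation of the kind you allude to, but anchored at $\Omega^0$ instead of at $1$: for a prescribed $g$ supported where $\Omega^0\ne0$, the function $h=\Omega^0\int g/(\Omega^0)^2\,{\rm d}w$ satisfies $\Omega^0h_w-h\Omega^0_w=g$, so $\mathcal W(g)=[\mathcal W(\Omega^0),\mathcal W(h)]$ lies in the ideal; the fields so obtained already form a non-solvable algebra, which is all that maximality requires. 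With this replacement your proof goes through and becomes a legitimate variant of the paper's argument; as written, it rests on an invalid inference at its key step.
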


\begin{proof}
We temporarily denote by~$\mathfrak s$ the finite-dimensional part of~$\mathfrak g$,
$\mathfrak s=\langle\mathcal D,\mathcal G,\mathcal P^t, \mathcal P^x, \mathcal P^v\rangle$.
The subspace~$\mathfrak s$ is an ideal of~$\mathfrak g$,
which is solvable since $\mathfrak s''=\{0\}$.
Therefore, it is contained in the radical $\mathfrak r$ of~$\mathfrak g$, $\mathfrak s\subseteq\mathfrak r$.
If an ideal of~$\mathfrak g$ contains a vector field~$\mathcal W(\Omega^0)$
for a particular nonvanishing $\Omega^0=\Omega^0(w)$ substituted for $\Omega$,
then it contains the entire infinite-dimensional part $\langle\mathcal W(\Omega)\rangle$
and hence it is not solvable.
This means that $\mathfrak r\cap\langle\mathcal W(\Omega)\rangle=\{0\}$.
Therefore, $\mathfrak r=\mathfrak s$.
\end{proof}

Thus, we find a Levi decomposition of the infinite-dimensional algebra~$\mathfrak g$,
$\mathfrak g=\mathfrak r\oplus\mathfrak g''$, where
$\mathfrak r$ is the (finite-dimensional) radical and
$\mathfrak g''$ is an (infinite-dimensional) simple subalgebra,
which is also an (mega)ideal of~$\mathfrak g$ but contains no proper subideals.

\begin{lemma}
The nilradical~$\mathfrak n$ of~$\mathfrak g$ is spanned by the vector fields
$\mathcal G$, $\mathcal P^t$, $\mathcal P^x$ and~$\mathcal P^v$,
\[
\mathfrak n=\langle\mathcal G,\mathcal P^t,\mathcal P^x,\mathcal P^v\rangle.
\]
\end{lemma}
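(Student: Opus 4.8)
The nilradical $\mathfrak n$ is by definition the maximal nilpotent ideal of $\mathfrak g$, so the plan is to exhibit $\mathfrak n_0:=\langle\mathcal G,\mathcal P^t,\mathcal P^x,\mathcal P^v\rangle$ as a nilpotent ideal and then to show that it cannot be enlarged. These two facts give the inclusions $\mathfrak n_0\subseteq\mathfrak n$ and $\mathfrak n\subseteq\mathfrak n_0$ respectively, hence equality.

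First I would check that $\mathfrak n_0$ is an ideal. Besides the commutation relations listed above, I need only $[\mathcal D,\mathcal G]=0$, $[\mathcal G,\mathcal P^x]=0$ and the centrality of $\mathcal P^v$, all of which are immediate; the fields $\mathcal W(\Omega)$ act solely on $w$ and hence commute with every generator of $\mathfrak n_0$. Bracketing each generator of $\mathfrak g$ with each generator of $\mathfrak n_0$ then always returns an element of $\mathfrak n_0$. For nilpotency I would compute the descending central series: the sole nonzero bracket among the generators of $\mathfrak n_0$ is $[\mathcal G,\mathcal P^t]=-\mathcal P^x$, whence $\mathfrak n_0^2=\langle\mathcal P^x\rangle$ and $\mathfrak n_0^3=\{0\}$, so $\mathfrak n_0$ is nilpotent of class two and therefore $\mathfrak n_0\subseteq\mathfrak n$.

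For the reverse inclusion I would first place $\mathfrak n$ inside the radical. Since $\mathfrak g=\mathfrak r\oplus\mathfrak g''$ is a direct sum of ideals, the projection onto $\mathfrak g''$ along $\mathfrak r$ is a Lie-algebra homomorphism; as $\mathfrak g''=\langle\mathcal W(\Omega)\rangle$ is simple it has no nonzero nilpotent ideal, so the image of any nilpotent ideal of $\mathfrak g$ is trivial and that ideal must lie in $\mathfrak r=\langle\mathcal D,\mathcal G,\mathcal P^t,\mathcal P^x,\mathcal P^v\rangle$ by Lemma~\ref{lem:IDFMRadical}. In particular $\mathfrak n\subseteq\mathfrak r$ is finite-dimensional, and it remains only to rule out $\mathcal D$. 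If some element of $\mathfrak n$ had a nonzero $\mathcal D$-component, then subtracting its $\mathfrak n_0$-part (already in $\mathfrak n$) would force $\mathcal D\in\mathfrak n$; but then $\mathcal P^t\in\mathfrak n$ together with $\operatorname{ad}_{\mathcal D}^{\,k}\mathcal P^t=(-1)^k\mathcal P^t\neq0$ shows that $\mathcal P^t$ survives in every term of the descending central series of $\mathfrak n$, contradicting nilpotency (equivalently, $\operatorname{ad}_{\mathcal D}$ fails to be nilpotent, against Engel's theorem applied to the finite-dimensional $\mathfrak n$). Hence $\mathcal D\notin\mathfrak n$, so $\mathfrak n\subseteq\mathfrak n_0$ and $\mathfrak n=\mathfrak n_0$.

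The routine part is the commutator bookkeeping; the one genuine point to get right is that the nilradical cannot absorb any piece of the infinite-dimensional part $\mathfrak g''$, which is precisely what the splitting $\mathfrak g=\mathfrak r\oplus\mathfrak g''$ and the simplicity of $\mathfrak g''$ guarantee. Once the problem is thereby reduced to the five-dimensional radical, the remaining content is the short eigenvalue argument excluding $\mathcal D$.
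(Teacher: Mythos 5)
Your proof is correct and follows essentially the same route as the paper's: exhibit $\langle\mathcal G,\mathcal P^t,\mathcal P^x,\mathcal P^v\rangle$ as a nilpotent ideal, place the nilradical inside the radical, and rule out any enlargement by $\mathcal D$ (the paper does this by a dimension count in $\mathfrak r$ plus non-nilpotency of $\mathfrak r$, you by the equivalent $\operatorname{ad}_{\mathcal D}$-eigenvalue argument). The differences are matters of detail only --- you justify $\mathfrak n\subseteq\mathfrak r$ via the Levi decomposition and simplicity of $\mathfrak g''$ rather than citing it as standard, and your lower-central-series computation ($\mathfrak n_0^2=\langle\mathcal P^x\rangle\ne\{0\}$, $\mathfrak n_0^3=\{0\}$) is in fact more accurate than the paper's claim that $\mathfrak s^2=\{0\}$, which under the paper's own indexing convention should read $\mathfrak s^3=\{0\}$ since $[\mathcal P^t,\mathcal G]=\mathcal P^x$.
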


\begin{proof}\looseness=-1
The nilradical of~$\mathfrak g$ is contained in the radical~$\mathfrak r$ of~$\mathfrak g$.
We temporarily denote by~$\mathfrak s$ the span of the vector fields
$\mathcal G$, $\mathcal P^t$, $\mathcal P^x$ and~$\mathcal P^v$,
$\mathfrak s=\langle\mathcal G,\mathcal P^t,\mathcal P^x,\mathcal P^v\rangle$.
The subspace~$\mathfrak s$ is an ideal of~$\mathfrak g$, and it is nilpotent since $\mathfrak s^2=\{0\}$.
Moreover, $\mathfrak s$ is the maximal nilpotent ideal of~$\mathfrak g$
since the only subspace of~$\mathfrak r$ properly containing $\mathfrak s$
is the radical~$\mathfrak r$ itself, which is not nilpotent. Thus, $\mathfrak n=\mathfrak s$.
\end{proof}

\begin{corollary}
The derivatives $\mathfrak r'=\langle\mathcal P^t,\mathcal P^x\rangle$
and $\mathfrak n'=\langle\mathcal P^x\rangle$ of the
radical~$\mathfrak r$ and the nilradical~$\mathfrak n$ of~$\mathfrak g$, respectively, are megaideals of~$\mathfrak g$.
\end{corollary}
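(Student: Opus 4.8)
The plan is to invoke two general facts about megaideals already recorded in the excerpt: first, that the derived algebra of any Lie algebra is a megaideal of that algebra (it belongs to the derived series); and second, that megaideals of megaideals of~$\mathfrak g$ are again megaideals of~$\mathfrak g$. Combining these with the two preceding lemmas, which establish that~$\mathfrak r$ and~$\mathfrak n$ are themselves megaideals of~$\mathfrak g$, reduces the assertion to a routine bracket computation identifying~$\mathfrak r'$ and~$\mathfrak n'$ as the claimed subspaces.

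First I would compute the derived algebra~$\mathfrak r'=[\mathfrak r,\mathfrak r]$ directly from the commutation relations listed at the start of the section. Within $\mathfrak r=\langle\mathcal D,\mathcal G,\mathcal P^t,\mathcal P^x,\mathcal P^v\rangle$ the only nonzero brackets are $[\mathcal P^t,\mathcal D]=\mathcal P^t$, $[\mathcal P^x,\mathcal D]=\mathcal P^x$ and $[\mathcal P^t,\mathcal G]=\mathcal P^x$ (the field~$\mathcal P^v$ is central and $[\mathcal D,\mathcal G]=0$), so the span of all these brackets is exactly $\langle\mathcal P^t,\mathcal P^x\rangle$, confirming $\mathfrak r'=\langle\mathcal P^t,\mathcal P^x\rangle$. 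Since~$\mathfrak r'$ is the derivative of~$\mathfrak r$, it is a megaideal of~$\mathfrak r$; and since~$\mathfrak r$ is a megaideal of~$\mathfrak g$ by Lemma~\ref{lem:IDFMRadical}, transitivity yields that~$\mathfrak r'$ is a megaideal of~$\mathfrak g$.

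Next I would repeat the computation for $\mathfrak n=\langle\mathcal G,\mathcal P^t,\mathcal P^x,\mathcal P^v\rangle$. Here the only surviving bracket is $[\mathcal P^t,\mathcal G]=\mathcal P^x$, while $[\mathcal G,\mathcal P^x]=0$ and the remaining pairs commute, so $\mathfrak n'=\langle\mathcal P^x\rangle$. As before, $\mathfrak n'$ is a megaideal of the megaideal~$\mathfrak n$, hence a megaideal of~$\mathfrak g$.

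There is no genuine obstacle in this argument: the entire conceptual content is the transitivity of the megaideal property together with the derivative-is-a-megaideal fact, both quoted earlier in the excerpt. The only point requiring care is the bookkeeping of which brackets vanish, in particular verifying $[\mathcal D,\mathcal G]=0$ and $[\mathcal G,\mathcal P^x]=0$ so that~$\mathfrak r'$ and~$\mathfrak n'$ come out with the claimed spans rather than with larger subspaces.
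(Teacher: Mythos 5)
Your proposal is correct and follows essentially the same route as the paper, which states this corollary without further proof precisely because it is immediate from the general facts quoted in Section~\ref{sec:IDFMMethodsForCompleteGroupSymmetries} (derivatives are megaideals, and megaideals of megaideals of~$\mathfrak g$ are megaideals of~$\mathfrak g$) together with Lemmas identifying~$\mathfrak r$ and~$\mathfrak n$. Your explicit bracket bookkeeping, including the checks $[\mathcal D,\mathcal G]=0$ and $[\mathcal G,\mathcal P^x]=0$, correctly yields the claimed spans $\mathfrak r'=\langle\mathcal P^t,\mathcal P^x\rangle$ and $\mathfrak n'=\langle\mathcal P^x\rangle$.
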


\begin{corollary}
The ideal~$\mathfrak m_1=\langle\mathcal G,\mathcal P^x,\mathcal P^v\rangle$ of the algebra~$\mathfrak g$
is its megaideal.
\end{corollary}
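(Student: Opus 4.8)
The plan is to realize $\mathfrak{m}_1$ as the megaideal produced by Proposition~\ref{prop:IDFMMegaideal} from a suitable triple of already-established megaideals, thereby avoiding any direct appeal to the automorphism group. Concretely, I would apply the proposition with $\mathfrak{i}_0=\mathfrak{n}=\langle\mathcal{G},\mathcal{P}^t,\mathcal{P}^x,\mathcal{P}^v\rangle$ (the nilradical), $\mathfrak{i}_1=\mathfrak{r}=\langle\mathcal{D},\mathcal{G},\mathcal{P}^t,\mathcal{P}^x,\mathcal{P}^v\rangle$ (the radical), and $\mathfrak{i}_2=\mathfrak{n}'=\langle\mathcal{P}^x\rangle$; all three are megaideals of $\mathfrak{g}$ by the preceding lemmas and corollaries. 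The claim then reduces to showing that the set $\mathfrak{s}=\{z\in\mathfrak{n}\colon[\langle z\rangle,\mathfrak{r}]\subseteq\langle\mathcal{P}^x\rangle\}$ coincides with $\mathfrak{m}_1$.

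To verify this, I would write a general $z\in\mathfrak{n}$ as $z=c_1\mathcal{G}+c_2\mathcal{P}^t+c_3\mathcal{P}^x+c_4\mathcal{P}^v$ and compute its brackets against the basis of $\mathfrak{r}$ using the commutation table. One finds $[z,\mathcal{D}]=c_2\mathcal{P}^t+c_3\mathcal{P}^x$, $[z,\mathcal{G}]=c_2\mathcal{P}^x$, $[z,\mathcal{P}^t]=-c_1\mathcal{P}^x$, and $[z,\mathcal{P}^x]=[z,\mathcal{P}^v]=0$. Every one of these already lies in $\langle\mathcal{P}^x\rangle$ except for the $\mathcal{P}^t$-term appearing in $[z,\mathcal{D}]$; hence the containment $[\langle z\rangle,\mathfrak{r}]\subseteq\langle\mathcal{P}^x\rangle$ holds if and only if $c_2=0$. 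Therefore $\mathfrak{s}=\langle\mathcal{G},\mathcal{P}^x,\mathcal{P}^v\rangle=\mathfrak{m}_1$, and the proposition immediately yields that $\mathfrak{m}_1$ is a megaideal of~$\mathfrak{g}$.

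The step requiring the actual idea is the choice of the triple, everything after it being a one-line bracket computation. The guiding observation is that within the nilradical the generator $\mathcal{P}^t$ is singled out precisely because its bracket with the scaling field $\mathcal{D}\in\mathfrak{r}$ produces a $\mathcal{P}^t$-term that escapes $\mathfrak{n}'=\langle\mathcal{P}^x\rangle$, whereas $\mathcal{G}$, $\mathcal{P}^x$ and $\mathcal{P}^v$ have all of their brackets with $\mathfrak{r}$ trapped inside $\langle\mathcal{P}^x\rangle$. I would note in passing that enlarging $\mathfrak{i}_1$ from $\mathfrak{r}$ to all of $\mathfrak{g}$ changes nothing, since the infinite-dimensional part $\langle\mathcal{W}(\Omega)\rangle$ commutes with every element of $\mathfrak{n}$; thus the construction is robust, and the only genuine obstacle is recognizing which known megaideal should play the role of $\mathfrak{i}_2$ so as to separate $\mathcal{P}^t$ from the remaining nilpotent generators.
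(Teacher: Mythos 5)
Your proof is correct and takes essentially the same approach as the paper, which invokes Proposition~\ref{prop:IDFMMegaideal} with $\mathfrak i_0=\mathfrak i_1=\mathfrak r$ and $\mathfrak i_2=\mathfrak n'$, while you take $\mathfrak i_0=\mathfrak n$. The two choices yield the same set, since for $z\in\mathfrak r$ the condition $[\langle z\rangle,\mathcal P^t]\subseteq\langle\mathcal P^x\rangle$ already forces the $\mathcal D$-component of $z$ to vanish, so your bracket computation is exactly the verification that the paper leaves implicit.
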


\begin{proof}
This is a simple consequence of Proposition~\ref{prop:IDFMMegaideal} for
$\mathfrak i_0=\mathfrak r$, $\mathfrak i_1=\mathfrak r$ and
$\mathfrak i_2=\mathfrak n'$.
\end{proof}

The nilradical~$\mathfrak n$ is not essential for the megaideal-based version
of the algebraic method since it is the sum of other megaideals of~$\mathfrak g$,
$\mathfrak n=\mathfrak m_1+\mathfrak r'$.
As a result, for finding the complete point symmetry group of the system~\eqref{eq:IDFMModelForGroupAnalysis}
with the megaideal-based version of the algebraic method
we use the following list of megaideals of the algebra~$\mathfrak g$:
\begin{equation}\label{eq:IDFMModelMegaideaList}
\langle\mathcal D,\mathcal G,\mathcal P^t, \mathcal P^x, \mathcal P^v\rangle,\quad
\langle\mathcal G,\mathcal P^x,\mathcal P^v\rangle,\quad
\langle\mathcal P^t, \mathcal P^x\rangle,\quad \langle\mathcal P^x\rangle,\quad \langle\mathcal P^v\rangle,\quad
\langle\mathcal W(\Omega)\rangle.
\end{equation}

\begin{theorem}\label{thm:IDFMSymmetryGroup}
The complete point symmetry group~$G$ of the modified no-slip isothermal drift flux model~\eqref{eq:IDFMModelForGroupAnalysis}
consists of the transformations
\begin{gather}\label{eq:IDFMSymmetryGroup}
\begin{split}
&\tilde t=T^1t+T^0,\quad \tilde x=T^1x+T^1U^0t+X^0,\\
&\tilde u=u+U^0, \quad \tilde v=v+V^0,\quad \tilde w=W(w),
\end{split}
\end{gather}
where~$T^0$, $T^1$, $X^0$, $U^0$ and~$V^0$ are arbitrary constants with $T^1\ne0$
and~$W$ runs through the set of smooth functions of~$w$ with $W_w\ne0$.
\end{theorem}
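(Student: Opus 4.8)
The plan is to follow the megaideal-based algebraic method described in Section~\ref{sec:IDFMMethodsForCompleteGroupSymmetries}, using the list of megaideals~\eqref{eq:IDFMModelMegaideaList}, and then to finish off with the direct method. Write a putative point symmetry as $\mathcal T\colon(t,x,u,v,w)\mapsto(\tilde t,\tilde x,\tilde u,\tilde v,\tilde w)$, with smooth components depending a priori on all of $(t,x,u,v,w)$ and with nonsingular Jacobian. Since $\mathcal T$ is a point symmetry, its pushforward $\mathcal T_*$ is an automorphism of~$\mathfrak g$ preserving every megaideal, i.e.\ $\mathcal T_*\mathfrak i=\mathfrak i$ for each $\mathfrak i$ in~\eqref{eq:IDFMModelMegaideaList}, where on the right-hand side $\mathfrak i$ is read off in the variables $(\tilde t,\dots,\tilde w)$. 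For $Q=\tau\p_t+\xi\p_x+\eta\p_u+\theta\p_v+\zeta\p_w$ the pushforward is $\mathcal T_*Q=(Q\tilde t)\p_{\tilde t}+(Q\tilde x)\p_{\tilde x}+(Q\tilde u)\p_{\tilde u}+(Q\tilde v)\p_{\tilde v}+(Q\tilde w)\p_{\tilde w}$, re-expressed in the new variables.

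I would first exploit the low-dimensional megaideals. Because $\langle\mathcal P^x\rangle$ is one-dimensional and $\mathcal T_*\mathcal P^x$ must be a \emph{constant} multiple of $\p_{\tilde x}$, every component except $\tilde x$ is independent of $x$ while $\p_x\tilde x$ is a nonzero constant; analogously $\langle\mathcal P^v\rangle$ makes every component except $\tilde v$ independent of $v$ with $\p_v\tilde v$ a nonzero constant. The infinite-dimensional megaideal $\langle\mathcal W(\Omega)\rangle$ forces $\tilde w$ to depend on $w$ alone with $\p_w\tilde w\ne0$, and the other components to be $w$-independent. The two-dimensional megaideal $\langle\mathcal P^t,\mathcal P^x\rangle$ then makes $\tilde u,\tilde v,\tilde w$ independent of $t$ and forces $\p_t\tilde t$, $\p_t\tilde x$ to be constants. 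Imposing $\mathcal T_*\mathcal G\in\mathfrak m_1$ and $\mathcal T_*\mathcal D\in\mathfrak r$ removes the residual $u$-dependence of $\tilde t$ and the residual $u$-term of $\tilde x$ (beyond an admissible $t$-term), turns $\tilde u$ and $\tilde v$ into affine functions, and produces the relation linking the $t$- and $x$-scalings to the slope of $\tilde u$. The outcome is a finite-parameter family
\[\tilde t=\mu t+T^0,\quad \tilde x=T^1x+\nu t+X^0,\quad \tilde u=au+b,\quad \tilde v=\alpha v+\kappa u+\delta,\quad \tilde w=W(w),\]
with constants subject to $\mu\ne0$, $T^1\ne0$, $a\ne0$, $\alpha\ne0$, $T^1=a\mu$, and $W_w\ne0$.

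To complete the proof I would substitute this reduced ansatz into the three equations of~\eqref{eq:IDFMModelForGroupAnalysis}, use the chain rule to express the transformed derivatives, and split the resulting identities with respect to the derivatives of $u$, $v$, $w$. This residual direct-method step is where the structure of the system enters, and it is the main obstacle: the scaling generator $\mathcal D$ in~\eqref{eq:IDFMSymmetryOperatorsSystem3} acts trivially on $u$ and $v$, so the absence of independent dilations of $t$ and $x$ forces $\mu=T^1$, whence $a=1$ and $\tilde u=u+U^0$; matching the Galilean-type shear then gives $\nu=T^1U^0$, while the second equation forces $\alpha=1$ and $\kappa=0$, so that $\tilde v=v+V^0$. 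Here it is the normalization of the sound speed to~$1$ that rigidifies the admissible scalings of $u$ and $v$ (only the sign freedom in $T^1$ survives, yielding the reflection $(t,x)\mapsto(-t,-x)$). This reproduces exactly~\eqref{eq:IDFMSymmetryGroup}. Finally I would verify the converse, namely that every transformation of the form~\eqref{eq:IDFMSymmetryGroup} with $T^1\ne0$ and $W_w\ne0$ does map solutions of~\eqref{eq:IDFMModelForGroupAnalysis} to solutions, which is a routine substitution.
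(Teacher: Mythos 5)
Your two-stage strategy --- megaideal invariance to cut $\mathcal T$ down to an affine ansatz, then the direct method --- is exactly the paper's, and your intermediate result (the affine form with the coupling $T^1=a\mu$ between the $x$-, $u$- and $t$-scalings and a possible term $\kappa u$ in $\tilde v$) coincides with the paper's~\eqref{eq:IDFMMegaidealsAftermath}. Up to that point the argument is sound.

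The gap is in the concluding direct-method step, which is where the theorem is actually decided and which you assert rather than carry out; moreover, the inferences you assert do not hold. The claim that ``the absence of independent dilations of $t$ and $x$ forces $\mu=T^1$, whence $a=1$'' is a non sequitur: after the algebraic step the only relation in force is $T^1=a\mu$, and nothing yet constrains~$a$. Performing the substitution with $\p_{\tilde t}=\mu^{-1}\big(\p_t-(\nu/T^1)\p_x\big)$, $\p_{\tilde x}=(T^1)^{-1}\p_x$ and splitting with respect to the parametric derivatives gives: from~\eqref{eq:IDFMEquation3}, $\nu=\mu b$; from the $v_x$- and $u_x$-coefficients in~\eqref{eq:IDFMEquation1}, $\alpha=a^2$ and $\kappa=0$; from the $u_x$-coefficient in~\eqref{eq:IDFMEquation2}, $\alpha=1$. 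Hence the computation yields $a^2=1$, i.e.\ $a=\pm1$, not $a=1$: the constraint $\alpha=a^2$, which your sketch never produces, is the one tying the $u$-scaling to the $v$-scaling, and it leaves a sign free. That sign cannot be eliminated by any correct completion of this step, because $a=-1$ corresponds to the transformation $(t,x,u,v,w)\mapsto(t,-x,-u,v,w)$, which genuinely maps solutions of~\eqref{eq:IDFMModelForGroupAnalysis} to solutions (it is spatial reflection with velocity reversal; in Riemann invariants it reads $(\ri^1,\ri^2,x)\mapsto(-\ri^2,-\ri^1,-x)$ and interchanges~\eqref{eq:IDFMDEquation1} and~\eqref{eq:IDFMDEquation2}). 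So your parenthetical remark that ``only the sign freedom in $T^1$ survives'' is precisely the assertion that needs proof, and it fails: the complete point symmetry group is generated by the transformations~\eqref{eq:IDFMSymmetryGroup} together with this reflection. (For comparison, the paper's proof does carry out the substitution and reports $a_{22}=a_{55}=1$, $a_{52}=0$; redoing that computation with signs tracked gives $a_{22}^2=a_{55}=1$, so the case $a_{22}=-1$ must be adjoined there as well --- which only underlines that the step you glossed over is the crux of the statement, not routine bookkeeping.)
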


\begin{proof}
The general form of a point symmetry transformation for the system~\eqref{eq:IDFMModelForGroupAnalysis} is
\begin{gather*}
\mathcal T\colon\ (\tilde t,\tilde x,\tilde u,\tilde v,\tilde w)=(T,X,U,V,W),
\end{gather*}
where~$T$, $X$, $U$, $V$ and~$W$ are functions of~$t$, $x$, $u$, $v$ and~$w$ with nonvanishing Jacobian.
To obtain constraints for a point symmetry transformation~$\mathcal T$,
we push forward each of the Lie symmetry generators~\eqref{eq:IDFMSymmetryOperatorsSystem3}, $Q$,
by this transformation and use the invariance, with respect to the pushforward~$\mathcal T_*$,
of the minimal megaideal from the list~\eqref{eq:IDFMModelMegaideaList} that contains~$Q$.
This leads to the following conditions:
\begin{gather}\label{eq:IDFMPushforwardAction}
\begin{split}
&\mathcal T_*\mathcal D=a_{11}(\tilde t\p_{\tilde t}+\tilde x\p_{\tilde x})+a_{21}(\tilde t\p_{\tilde x}+
\p_{\tilde u})+a_{31}\p_{\tilde t}+a_{41}\p_{\tilde x}+a_{51}\p_{\tilde v},\\
&\mathcal T_*\mathcal G=a_{22}(\tilde t\p_{\tilde x}+\p_{\tilde u})+
a_{42}\p_{\tilde x}+a_{52}\p_{\tilde v},\\
&\mathcal T_*\mathcal P^t=T_t\p_{\tilde t}+X_t\p_{\tilde x}+U_t\p_{\tilde u}
+V_t\p_{\tilde v}+W_t\p_{\tilde w}=a_{33}\p_{\tilde t}+a_{43}\p_{\tilde x},\\
&\mathcal T_*\mathcal P^x=T_x\p_{\tilde t}+X_x\p_{\tilde x}+U_x\p_{\tilde u}
+V_x\p_{\tilde v}+W_x\p_{\tilde w}=a_{44}\p_{\tilde x},\\
&\mathcal T_*\mathcal P^v=T_v\p_{\tilde t}+X_v\p_{\tilde x}+U_v\p_{\tilde u}
+V_v\p_{\tilde v}+W_v\p_{\tilde w}=a_{55}\p_{\tilde v},\\
&\mathcal T_*\mathcal W(\Omega)=\Omega(T_w\p_{\tilde t}+X_w\p_{\tilde x}+U_w\p_{\tilde u}
+V_w\p_{\tilde v}+W_w\p_{\tilde w})=\tilde\Omega^\Omega\p_{\tilde w},
\end{split}
\end{gather}
where all~$a$'s are constants
and $\tilde\Omega^\Omega$ is a smooth function of~$\tilde w$ depending on the parameter function $\Omega=\Omega(w)$.

We collect components of vector fields in the last four conditions of~\eqref{eq:IDFMPushforwardAction}
and then take into account the resulting equations when expanding and componentwise splitting the first two conditions.
As a result, we arrive at the following system:
\begin{gather*}
\begin{split}
&T_t=a_{33},\quad X_t=a_{43},\quad U_t=V_t=W_t=0;\quad
 T_x=U_x=V_x=W_x=0,\quad X_x=a_{44};\\
&T_v=X_v=U_v=W_v=0,\quad V_v=a_{55};\quad
 T_w=X_w=U_w=V_w=0,\quad \Omega W_w=\tilde\Omega^\Omega(W);\\
&tT_t=a_{11}T+a_{31},\quad tX_t+xX_x=a_{11}X+a_{41},\quad a_{21}=a_{51}=0;\\
&T_u=W_u=0,\quad tX_x+X_u=a_{22}T+a_{42},\quad U_u=a_{22},\quad V_u=a_{52}.
\end{split}
\end{gather*}
Equations of this system are partitioned into groups according to their source conditions in~\eqref{eq:IDFMPushforwardAction}.
The general solution of this system is given by
\begin{gather}\label{eq:IDFMMegaidealsAftermath}
\begin{split}
&T=a_{33}t-a_{31},\quad X=a_{22}a_{33}x+a_{43}t-a_{41},\\
& U=a_{22}u+U^0,\quad V=a_{55}v+a_{52}u+V^0,\quad W=W(w),
\end{split}
\end{gather}
where $U^0$ and~$V^0$ are arbitrary constants,
and additionally $a_{22}a_{33}a_{55}\ne0$, $a_{11}=1$, $a_{44}=a_{22}a_{33}$ and $a_{42}=a_{31}a_{22}$.

Now the direct method of computing complete point symmetry groups should be applied.
To this end, we apply a transformation of the form~\eqref{eq:IDFMMegaidealsAftermath}
to the system~$\mathcal S$.
To do this, the derivative operators with respect to the new independent
variables, $\p_{\tilde t}$ and~$\p_{\tilde x}$, have to be determined,
\begin{gather*}
\p_{\tilde t}=\frac{1}{a_{33}}\left(\p_t-\frac{a_{42}}{a_{22}a_{33}}\p_x \right),\quad
\p_{\tilde x}=\frac{1}{a_{22}a_{33}}\p_x.
\end{gather*}
Employing these derivative operators to express the derivatives of the new variables,
and enforcing the symmetry condition requires that $a_{42}=a_{33}U^0$, $a_{22}=a_{55}=1$, $a_{52}=0$.

Re-denoting parameter constants completes the proof of the theorem.
\end{proof}

\begin{corollary}
The modified no-slip isothermal drift flux model~\eqref{eq:IDFMModelForGroupAnalysis}
possesses two independent (up to combining with each other and with continuous symmetries)
discrete point symmetries given by the reflections
\begin{gather*}
(t,x,u,v,w)\rightarrow(-t,-x,u,v,w)
\quad\mbox{and}\quad
(t,x,u,v,w)\rightarrow(t,x,u,v,-w).
\end{gather*}
\end{corollary}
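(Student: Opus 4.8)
The plan is to read the corollary directly off the description of the complete point symmetry group~$G$ furnished by Theorem~\ref{thm:IDFMSymmetryGroup}. As recalled at the beginning of Section~\ref{sec:IDFMMethodsForCompleteGroupSymmetries}, the Lie symmetry group of~\eqref{eq:IDFMModelForGroupAnalysis}, generated by the vector fields~\eqref{eq:IDFMSymmetryOperatorsSystem3}, is precisely the identity component~$G^0$ of~$G$. Since we are classifying discrete symmetries up to composition with continuous ones and with each other, the object to analyze is the group of connected components $\pi_0(G)=G/G^0$: I would show that it is generated by the two displayed reflections. A transformation from~\eqref{eq:IDFMSymmetryGroup} is encoded by the parameter tuple $(T^1,T^0,X^0,U^0,V^0,W)$ subject only to $T^1\ne0$ and $W_w\ne0$, so the underlying space of~$G$ is homeomorphic to $\mathbb R^*\times\mathbb R^4\times\mathcal W$, where $\mathcal W$ is the set of smooth fiber maps~$W$ with $W_w\ne0$; thus it suffices to count the connected components of each factor and exhibit a representative of each nontrivial one.

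First I would dispose of the additive parameters: the constants $T^0$, $X^0$, $U^0$, $V^0$ each range over the connected set~$\mathbb R$ and may be driven to zero along a straight-line path inside~$G$, so they contribute nothing to $\pi_0(G)$. The remaining freedom sits in $T^1\in\mathbb R^*$ and in the map~$W$. Because $T^1$ never vanishes, its sign is a homotopy invariant, so $\{T^1>0\}$ and $\{T^1<0\}$ lie in different components; taking $T^1=-1$ with all other parameters trivial gives exactly the reflection $(t,x,u,v,w)\mapsto(-t,-x,u,v,w)$ (note that the term $T^1U^0t$ in~\eqref{eq:IDFMSymmetryGroup} drops out). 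Likewise, since $W_w$ is continuous and nowhere zero it keeps a constant sign, so $\mathcal W$ splits into an orientation-preserving class ($W_w>0$) and an orientation-reversing class ($W_w<0$); the representative $W(w)=-w$ of the latter yields the second reflection $(t,x,u,v,w)\mapsto(t,x,u,v,-w)$.

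It then remains to confirm that the two ``positive'' pieces are genuinely connected to the identity inside~$G$, so that no additional discrete symmetries are concealed. For $T^1>0$ this is clear, as the segment from $T^1$ to~$1$ avoids~$0$. For the fiber part the decisive observation is that the straight-line homotopy $W_s(w)=(1-s)W(w)+sw$, $s\in[0,1]$, satisfies $(W_s)_w=(1-s)W_w+s>0$ whenever $W_w>0$, hence is a path in~$G$ joining any orientation-preserving~$W$ to the identity map $w\mapsto w$; the companion homotopy $(1-s)W(w)-sw$ connects any orientation-reversing~$W$ to $w\mapsto-w$. Consequently the underlying space of~$G$ has exactly four connected components, indexed by the pair of signs $(T^1,W_w)\in\{+,-\}\times\{+,-\}$, so $\pi_0(G)\cong\mathbb Z_2\times\mathbb Z_2$ is generated by the two reflections above, which commute (they act on disjoint coordinates) and are therefore independent. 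The one point requiring genuine care is this last connectedness argument for the infinite-dimensional factor~$\mathcal W$; once the above straight-line homotopies are seen never to leave the region $W_w\ne0$, both the discreteness of the two reflections and their independence follow immediately.
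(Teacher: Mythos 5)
Your proof is correct and follows essentially the same route as the paper, which states this corollary as an immediate consequence of Theorem~\ref{thm:IDFMSymmetryGroup}: the discrete symmetries are representatives of the non-identity connected components of~$G$, which are indexed precisely by the signs of~$T^1$ and of~$W_w$. Your explicit straight-line homotopies merely fill in the connectedness details that the paper leaves implicit.
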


\begin{remark}
As we have already mentioned in Section~\ref{sec:IDFMMethodsForCompleteGroupSymmetries},
the automorphism-based version of the algebraic method for finding the complete point symmetry group
requires knowing the automorphism group of the corresponding maximal invariance algebra.
For the system~$\mathcal S$, this algebra is infinite-dimensional,
and therefore the computation of its automorphism group is complicated.
However, it could be useful to look for
the automorphism group $\mathrm{Aut}(\mathfrak r)$ of the radical~$\mathfrak r$
which coincides, in view of Lemma~\ref{lem:IDFMRadical}, with the finite-dimensional part of the algebra~$\mathfrak g$.
Note that the algebra~$\mathfrak r$ is isomorphic to the algebra $A^0_{4,8}\oplus A_1$
from the classification list of five-dimensional real Lie algebras
presented in~\cite{Mubarakzyanov1963b,Mubarakzyanov1963a}.
In the basis $(\mathcal D,\mathcal G,\mathcal P^t, \mathcal P^x, \mathcal P^v)$,
the automorphism group $\mathrm{Aut}(\mathfrak r)$
group can be identified with the matrix group constituted by the matrices of the form
\[
\left(
\begin{matrix}
1      &0            &0      &0            &0\\
0      &b_{22}       &0      &0            &0\\
b_{31} &0            &b_{33} &0            &0\\
b_{41} &b_{31}b_{22} &b_{43} &b_{22}b_{33} &0\\
b_{51} &b_{52}       &0      &0            &b_{55}
\end{matrix}\right) \qquad \mbox{with}\qquad b_{22}b_{33}b_{55}\ne0.
\]
The knowledge of~$\mathrm{Aut}(\mathfrak r)$ allows us to set constraints on the constants~$a$'s
in the conditions~\eqref{eq:IDFMPushforwardAction} before analyzing these conditions,
\[
a_{11}=1,\quad a_{22}a_{33}a_{55}\ne0,\quad a_{21}=0,\quad a_{44}=a_{22}a_{33},\quad a_{42}=a_{31}a_{22}.
\]
The structure of the automorphism matrices obviously implies
that there is one more megaideal of~$\mathfrak r$ and, therefore, of~$\mathfrak g$,
$\mathfrak m_2=\langle \mathcal D, \mathcal P^t,\mathcal P^x,\mathcal P^v\rangle$.
This completes the description of megaideals of the algebra~$\mathfrak g$.
More specifically, the megaideals of~$\mathfrak g$ are exhausted by
the essential megaideals~$\mathfrak n'$, $\mathcal Z(\mathfrak g)$, $\mathfrak r'$, $\mathfrak m_1$, $\mathfrak m_2$ and $\mathfrak g''$
and their sums.
The megaideal~$\mathfrak m_2$ cannot be found
using the means presented in Section~\ref{sec:IDFMMethodsForCompleteGroupSymmetries}.
The presence of this megaideal explains
the first of the above constraints on the constants~$a$'s
whilst the rest of these constraints cannot be obtained using the megaideal-based version of algebraic method,
being, at the same time, a direct consequence of its automorphism-based counterpart applied to~$\mathfrak r$.
In fact, this discussion shows a possibility for combining
the automorphism- and megaideal-based versions of algebraic method
for finding the complete point symmetry groups of systems of differential equations.
\end{remark}

\begin{remark}
There is one constraint for the constants~$a$'s, $a_{51}=0$, among those derived from the conditions~\eqref{eq:IDFMPushforwardAction}
that cannot be obtained from the structure of automorphism matrices of the radical~$\mathfrak r$.
This means that there exist automorphisms of the entire algebra~$\mathfrak g$
that are not induced by point transformations of~$(t,x,u,v,w)$.
\end{remark}

\section{Classification of subalgebras}\label{sec:IDFMClassificationSubalgebras}

In order to efficiently perform group-invariant reductions
for finding exact solutions of the system~\eqref{eq:IDFMModelForGroupAnalysis},
it is necessary to construct an optimal list of $G$-inequivalent one- and two-dimensional subalgebras
of the maximal Lie invariance algebra~$\mathfrak g$ admitted by this system~\cite{Olver1993}.
Since the algebra~$\mathfrak g$ is infinite-dimensional,
for constructing the adjoint representation of~$G$ on~$\mathfrak g$
we use the method based on computing pushforwards of vector fields in~$\mathfrak g$
by transformations in~$G$ \cite{CardosoBihloPopovych2011,SzatmariBihlo2012}.

\pagebreak

Any transformation~$\mathcal T$ from~$G$ can be represented as a composition of elementary symmetry transformations,
$\mathcal T=\mathscr D(T^1)\mathscr P(T^0)\mathscr P(X^0)\mathscr P(V^0)\mathscr G(U^0)\mathscr W(W)$~with
\begin{gather*}
\mathscr P^t(T^0)  \colon (\tilde t,\tilde x,\tilde u,\tilde v,\tilde w)=(t+T^0,x,u,v,w),\quad
\mathscr P^x(X^0)  \colon (\tilde t,\tilde x,\tilde u,\tilde v,\tilde w)=(t,x+X^0,u,v,w), \\
\mathscr P^v(V^0)  \colon (\tilde t,\tilde x,\tilde u,\tilde v,\tilde w)=(t,x,u,v+V^0,w),\quad
\mathscr D(T^1)    \colon (\tilde t,\tilde x,\tilde u,\tilde v,\tilde w)=(T^1t,T^1x,u,v,w),\\
\mathscr G(U^0)    \colon (\tilde t,\tilde x,\tilde u,\tilde v,\tilde w)=(t,x+U^0t,u+U^0,v,w),\,
\mathscr W(W) \colon (\tilde t,\tilde x,\tilde u,\tilde v,\tilde w)=(t,x,u,v,W(w)),
\end{gather*}
where, as in Theorem~\ref{thm:IDFMSymmetryGroup},
$T^0$, $T^1$, $X^0$, $U^0$ and~$V^0$ are arbitrary constants with $T^1\ne0$
and~$W(w)$ runs through the set of smooth functions of~$w$ with $W_w\ne0$.
The nonidentity actions of pushforwards of elementary symmetry transformations
on generating elements of~$\mathfrak g$ are exhausted by the following:
\begin{gather}\label{eq:IDFMPushforwardActiononGeneratingElements}
\begin{split}
&\mathscr P^t_*(T^0)\mathcal D=\mathcal D - T^0\mathcal P^t,\quad
\mathscr P^t_*(T^0)\mathcal G=\mathcal G - T^0\mathcal P^x,\quad
\mathscr P^x_*(X^0)\mathcal D=\mathcal D - X^0\mathcal P^x,\\
&\mathscr D_*(T^1)\mathcal P^t=T^1\mathcal P^t,\quad
\mathscr D_*(T^1)\mathcal P^x=T^1\mathcal P^x,\quad
\mathscr G_*(U^0)\mathcal P^t=\mathcal P^t + U^0\mathcal P^x,\\
&\mathscr W_*(W)\mathcal W(\Omega)=\mathcal W(\tilde\Omega),
\end{split}
\end{gather}
where the function $\tilde\Omega=\tilde\Omega(\tilde w)$ is related to~$\Omega=\Omega(w)$ by $\tilde\Omega(W(w))=W_w(w)\Omega(w)$.

\begin{theorem}\label{thm:IDFMOneDimensionalSubalgebras}
An optimal list of one-dimensional subalgebras of the maximal Lie invariance al\-geb\-ra~$\mathfrak g$
of the system~\eqref{eq:IDFMModelForGroupAnalysis} is exhausted by the subalgebras
\begin{gather}\label{thm:IDFMSubalgebras}
\begin{split}
&\langle\mathcal D+a\mathcal G+b\mathcal P^v+\mathcal W(\delta_1)\rangle,\quad
\langle\mathcal G+\delta_2\mathcal P^t+b\mathcal P^v+\mathcal W(\delta_1)\rangle,\quad
\langle\mathcal P^t+\delta_2\mathcal P^v+\mathcal W(\delta_1)\rangle,\\
&\langle\mathcal P^x+\delta_2\mathcal P^v+\mathcal W(\delta_1)\rangle,\quad
\langle \mathcal P^v+\mathcal W(\delta_1)\rangle,\quad
\langle \mathcal W(1)\rangle,
\end{split}
\end{gather}
where $\delta_1,\delta_2\in\{0,1\}$, and $a$ and~$b$ are arbitrary constants.
\end{theorem}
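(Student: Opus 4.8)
The plan is to classify the $G$-orbits on the set of lines $\langle Q\rangle$ in $\mathfrak g$, where a general element has the form $Q=a_0\mathcal D+a_1\mathcal G+a_2\mathcal P^t+a_3\mathcal P^x+a_4\mathcal P^v+\mathcal W(\Omega)$ with constant $a_0,\dots,a_4$ and $\Omega=\Omega(w)$. Since $[Q,Q]=0$, every nonzero $Q$ already spans a one-dimensional subalgebra, so the whole problem is the orbit structure of the pushforwards~\eqref{eq:IDFMPushforwardActiononGeneratingElements} on such lines. The structural fact I would lean on throughout is that $\mathfrak g=\mathfrak r\oplus\mathfrak g''$ with $[\mathfrak r,\mathfrak g'']=0$, so that the infinite-dimensional summand $\mathcal W(\Omega)$ can be normalized entirely independently of the finite-dimensional part.

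First I would stratify by means of the flag of megaideals
\[
\mathfrak g\supset\mathfrak n+\mathfrak g''\supset\mathfrak r'+\mathcal Z(\mathfrak g)+\mathfrak g''\supset\mathfrak n'+\mathcal Z(\mathfrak g)+\mathfrak g''\supset\mathcal Z(\mathfrak g)+\mathfrak g''\supset\mathfrak g''\supset\{0\},
\]
each term of which is a megaideal, being a sum of the megaideals listed in~\eqref{eq:IDFMModelMegaideaList} and the nilradical~$\mathfrak n$. The successive quotients are one-dimensional and are spanned by the images of $\mathcal D$, $\mathcal G$, $\mathcal P^t$, $\mathcal P^x$, $\mathcal P^v$ in turn, the final quotient being $\mathfrak g''$ itself. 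Because every element of~$G$ preserves each megaideal, it preserves this flag and hence acts on every one-dimensional quotient by a mere rescaling; consequently the highest generator in the ordered list $(\mathcal D,\mathcal G,\mathcal P^t,\mathcal P^x,\mathcal P^v,\mathcal W)$ that occurs in~$Q$ is a $G$-invariant. This yields precisely the six mutually exclusive and exhaustive cases of~\eqref{thm:IDFMSubalgebras}.

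Within each case I would normalize the remaining coefficients using the explicit elementary transformations. The overall scalar sets the leading coefficient to~$1$; the translations $\mathscr P^t(T^0)$ and $\mathscr P^x(X^0)$, which act by $\mathcal D\mapsto\mathcal D-T^0\mathcal P^t$, $\mathcal G\mapsto\mathcal G-T^0\mathcal P^x$ and $\mathcal D\mapsto\mathcal D-X^0\mathcal P^x$, successively annihilate the $\mathcal P^t$- and $\mathcal P^x$-terms; the scaling $\mathscr D(T^1)$, which rescales $\mathcal P^t,\mathcal P^x$ but fixes $\mathcal G,\mathcal P^v$ and $\mathcal W(\Omega)$, reduces (together with the overall scalar) any surviving $\mathcal P^t$- or $\mathcal P^v$-coefficient to $\delta_2\in\{0,1\}$; and $\mathscr W(W)$ brings $\mathcal W(\Omega)$ to $\mathcal W(\delta_1)$. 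For this last reduction I would invoke the rectification theorem for the vector field $\Omega(w)\p_w$: since $\mathscr W_*(W)$ acts on $\Omega$ exactly by $\tilde\Omega(W(w))=W_w\Omega$, every nonvanishing $\Omega$ is locally equivalent to $\Omega\equiv1$ while $\Omega\equiv0$ is a separate orbit, so that $\delta_1\in\{0,1\}$. Finally, since $\mathcal P^v$ is central and never appears on the right-hand side of~\eqref{eq:IDFMPushforwardActiononGeneratingElements}, its coefficient is an absolute invariant; this is why the parameter~$b$, and in the first case also the ratio~$a$ of the $\mathcal G$- to the $\mathcal D$-coefficient (both of which the flag shows to be $G$-invariant up to the common scaling), persist as genuine moduli rather than being removable.

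It then remains to check that the listed representatives are pairwise $G$-inequivalent. The continuous parameters $a$ and $b$ are the invariant coefficient ratios just identified, while the discrete labels are detected by megaideal membership: $\delta_1=0$ exactly when $\langle Q\rangle\subseteq\mathfrak r$, and $\delta_2=0$ exactly when $\langle Q\rangle$ lies in $\mathfrak m_1+\mathfrak g''$, $\mathfrak r'+\mathfrak g''$ or $\mathfrak n'+\mathfrak g''$ in the respective cases. The step I expect to be the main obstacle is the systematic treatment of the infinite-dimensional summand $\langle\mathcal W(\Omega)\rangle$: one has to argue carefully, using $[\mathfrak r,\mathfrak g'']=0$, that the rectification of $\Omega$ decouples from all the other normalizations, and that the only $\mathscr W$-invariant of a single vector field on the line is the vanishing locus of $\Omega$, so that the entire functional freedom collapses to the single Boolean label $\delta_1$.
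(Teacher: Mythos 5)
Your strategy coincides with the paper's: the ordering of the generators by the megaideal hierarchy (your flag is precisely the paper's partial order $\mathcal D\succ\mathcal G\succ\mathcal P^t\succ\mathcal P^x\succ\mathcal P^v\succ\mathcal W(\Omega)$), the resulting six-case split according to the leading term, and the normalization of the remaining coefficients by the elementary pushforwards~\eqref{eq:IDFMPushforwardActiononGeneratingElements}; your explicit verification of pairwise inequivalence via megaideal membership and invariant coefficients is a useful supplement that the paper leaves largely implicit. However, there is a genuine gap in the third case, the one with leading term $\mathcal P^t$. Your normalization toolkit consists of the overall scalar, $\mathscr P^t_*$, $\mathscr P^x_*$, $\mathscr D_*$ and $\mathscr W_*$, and never invokes the Galilean pushforward $\mathscr G_*$. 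For $Q=\mathcal P^t+a_4\mathcal P^x+a_5\mathcal P^v+\mathcal W(\Omega)$ the vector field contains neither $\mathcal D$ nor $\mathcal G$, so by~\eqref{eq:IDFMPushforwardActiononGeneratingElements} both translations act trivially on $Q$ (their pushforwards modify only $\mathcal D$ and $\mathcal G$), while $\mathscr D_*(T^1)$ multiplies the $\mathcal P^t$- and $\mathcal P^x$-coefficients by the same factor $T^1$, and $\mathscr W_*$ touches only the $\mathcal W$-part. Consequently, under the transformations you list, the ratio $a_4$ of the $\mathcal P^x$- to the $\mathcal P^t$-coefficient is an invariant of the line, and your procedure terminates at the spurious one-parameter family $\langle\mathcal P^t+c\,\mathcal P^x+\delta_2\mathcal P^v+\mathcal W(\delta_1)\rangle$ rather than at the listed representative $\langle\mathcal P^t+\delta_2\mathcal P^v+\mathcal W(\delta_1)\rangle$. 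The missing ingredient is exactly the one pushforward in~\eqref{eq:IDFMPushforwardActiononGeneratingElements} you omit, $\mathscr G_*(U^0)\mathcal P^t=\mathcal P^t+U^0\mathcal P^x$: acting with $\mathscr G_*(-a_4)$ deletes the $\mathcal P^x$-term, which is precisely what the paper does at this step.

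A second, minor inaccuracy: you assert that the flag shows the ratio $a$ of the $\mathcal G$- to the $\mathcal D$-coefficient to be $G$-invariant. Flag preservation alone only forces a triangular action on the two-dimensional quotient of $\mathfrak g$ by $\langle\mathcal P^t,\mathcal P^x,\mathcal P^v\rangle+\mathfrak g''$, i.e.\ it would still allow $\mathcal D\mapsto\alpha\mathcal D+\gamma\mathcal G$ modulo lower terms, which would shift $a$. The invariance of $a$ (and of $b$) really comes from the explicit formulas~\eqref{eq:IDFMPushforwardActiononGeneratingElements}, which show that no elementary pushforward alters the $\mathcal D$-, $\mathcal G$- or $\mathcal P^v$-coefficients of a vector field; since you already rely on those formulas elsewhere, this is easily repaired, unlike the missing $\mathscr G_*$ step, which must be added for the classification to close.
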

\begin{proof}
We start with the most general form of a basis vector field
of a one-dimensional subalgebra of the algebra~$\mathfrak g$,
\begin{gather*}
Q=a_1\mathcal D+a_2\mathcal G+a_3\mathcal P^t+a_4\mathcal P^x+a_5\mathcal P^v+\mathcal W(\Omega),
\end{gather*}
where $a_i$, $i=1,\dots,5$, are constants, $\Omega$ is a smooth function of $w$,
and at least one of these quantities does not vanish.
We simplify~$Q$ using the adjoint action of~$G$ and scaling of the entire~$Q$.
In all cases when $\Omega\ne0$, the parameter function $\Omega$ can be set to $1$
by applying the pushforward~$\mathscr W_*\big(\int 1/\Omega\,{\rm d}w\big)$.
In other words, we can always suppose $\Omega(w)=\delta_1\in\{0,1\}$.
Moreover, if $a_5\ne0$, then the summand with~$\mathcal P^v$ cannot be deleted using the adjoint action of~$G$.
\looseness=-1

We order the elements spanning~$\mathfrak g$ in the following way:
$\mathcal D\succ\mathcal G\succ\mathcal P^t\succ\mathcal P^x\succ\mathcal P^v\succ\mathcal W(\Omega)$.
This partial order is justified by the hierarchy of megaideals of the algebra~$\mathfrak g$.
Subsequent analysis is done recursively according to this order:
regarding the leading coefficient of~$Q$ as nonzero,
we scale this coefficient, if it is a constant, to~1 by scaling the entire vector field~$Q$,
further simplify the form of~$Q$ by the adjoint action of~$G$
and re-denote the parameters preserved in the final form of~$Q$.
As a result, we reduce the classification to the following inequivalent cases.
\looseness=-1

1.\ $a_1=1$.
Using successively the pushforwards $\mathscr P^t_*(a_3)$ and $\mathscr P^x_*(a_4-a_2a_3)$
we set $a_3=a_4=0$.
This yields the first subalgebra (more precisely, the first family of subalgebras) of the list~\eqref{thm:IDFMSubalgebras}.

2.\ $a_1=0$, $a_2=1$.
We act by $\mathcal P^t_*(a_4)$ and, if $a_3\ne0$, by $\mathscr D_*(a_3^{-1})$
to set $a_4=0$ and $a_3=1$, respectively,
which leads to the second subalgebra.

3.\ $a_1=a_2=0$, $a_3=1$.
Then we act by $\mathscr G_*(-a_4)$ to set $a_4=0$.
If $a_5\ne0$, then we can also set $a_5=1$ by acting $\mathscr D_*(a_5)$ and rescaling~$Q$.
This gives the third subalgebra.

4.\ $a_1=a_2=a_3=0$, $a_4=1$.
Upon acting by~$\mathscr D_*(a_5)$ and rescaling~$Q$, we derive the fourth subalgebra.

5.\ $a_i=0$, $i=1,\dots,4$, $a_5=1$. This yields the fifth subalgebra.

6.\ $a_i=0$, $i=1,\dots,5$.
Then $\Omega$ is nonzero, which corresponds to the last subalgebra.
\end{proof}

\begin{theorem}\label{thm:IDFMTwoDimensionalSubalgebras}
An optimal list of two-dimensional subalgebras of the maximal al\-geb\-ra
of invariance~$\mathfrak g$ of the system~\eqref{eq:IDFMModelForGroupAnalysis} is given by
\begin{gather}
\begin{split}\label{IDFMTwoDimensionalSubalgebras} 
&
 \langle\mathcal D+a\mathcal P^v+\mathcal W(\delta_1),\mathcal G+b\mathcal P^v+\mathcal W(\delta_2)\rangle,\quad
 \langle\mathcal D+a\mathcal P^v+\mathcal W(\delta_5),\mathcal P^t\rangle,
\\&
\langle\mathcal D+a\mathcal P^v+\mathcal W(w),\mathcal P^t+\mathcal W(1)\rangle,\quad
 \langle\mathcal D+a\mathcal G+b\mathcal P^v+\mathcal W(\delta_5),\mathcal P^x\rangle,
\\&
\langle\mathcal D+a\mathcal G+b\mathcal P^v+\mathcal W(w),\mathcal P^x+\mathcal W(1)\rangle,\quad
 \langle\mathcal D+a\mathcal G+\mathcal W(\delta_1),      \mathcal P^v+\mathcal W(\delta_2)\rangle,
\\&
 \langle\mathcal G+\delta_3\mathcal P^t+a\mathcal P^v+\mathcal W(\delta_1), \mathcal P^x+\delta_4\mathcal P^v+\mathcal W(\delta_2)\rangle, \quad \langle\mathcal G+\delta_5\mathcal P^t+\mathcal W(\delta_1),\mathcal P^v+\mathcal W(\delta_2)\rangle,
\\&
\langle\mathcal P^t+\delta_3\mathcal P^v+\mathcal W(\delta_1),\mathcal P^x+\delta_4\mathcal P^v+\mathcal W(\delta_2)\rangle,
\quad \langle\mathcal P^t+\mathcal W(\delta_1),                   \mathcal P^v+\mathcal W(\delta_2)\rangle,\quad
\\&
 \langle\mathcal P^x+\mathcal W(\delta_1),                   \mathcal P^v+\mathcal W(\delta_2)\rangle,\quad
 \langle\mathcal D+a\mathcal G+b\mathcal P^v+c\mathcal W(w),    \mathcal W(1)\rangle,
\\&
 \langle\mathcal G+\delta_5\mathcal P^t+b\mathcal P^v+c\mathcal W(w),\mathcal W(1)\rangle,\quad
 \langle\mathcal P^t+\delta_1\mathcal P^v+\delta_2\mathcal W(w),            \mathcal W(1)\rangle,
\\&
 \langle\mathcal P^x+\delta_1\mathcal P^v+\delta_2\mathcal W(w),            \mathcal W(1)\rangle,\quad
 \langle\mathcal P^v+c\mathcal W(w),                                 \mathcal W(1)\rangle,\quad
 \langle\mathcal W(w),                                               \mathcal W(1)\rangle,\quad
\end{split}
\end{gather}
where $\delta_1$, \dots, $\delta_4$, $a$, $b$ and~$c$ are arbitrary constants such that
one of nonzero components in each of the pairs $(\delta_1,\delta_2)$ and $(\delta_3,\delta_4)$
can be set to be equal~1, 
and $\delta_5\in\{0,1\}$.
In the seventh family of subalgebras, either $a=0$ if $\delta_4\ne0$ or $\delta_1=0$ if $\delta_2\ne0$.
\end{theorem}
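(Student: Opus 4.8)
The plan is to classify two-dimensional subalgebras of $\mathfrak g$ by building on the optimal list of one-dimensional subalgebras from Theorem~\ref{thm:IDFMOneDimensionalSubalgebras} and exploiting the megaideal hierarchy together with the adjoint action described in~\eqref{eq:IDFMPushforwardActiononGeneratingElements}. A two-dimensional subalgebra~$\mathfrak h=\langle Q_1,Q_2\rangle$ must be a Lie algebra, so the first step is to write a general basis $Q_i=a_{i1}\mathcal D+a_{i2}\mathcal G+a_{i3}\mathcal P^t+a_{i4}\mathcal P^x+a_{i5}\mathcal P^v+\mathcal W(\Omega_i)$ and impose the closure condition $[Q_1,Q_2]\in\langle Q_1,Q_2\rangle$. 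Using the commutation relations recorded in Section~\ref{sec:IDFMCompletePointSymmetryGroup}, this closure condition splits into algebraic constraints on the constants $a_{ij}$ together with a first-order ODE-type constraint on the pair $(\Omega_1,\Omega_2)$ coming from the bracket $[\mathcal W(\Omega_1),\mathcal W(\Omega_2)]=\mathcal W(\Omega_1\Omega_{2w}-\Omega_2\Omega_{1w})$.

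The organizing principle will be the projection of~$\mathfrak h$ onto the quotient $\mathfrak g/\mathfrak g''$, i.e.\ onto the finite-dimensional part $\mathfrak r$ modulo the simple ideal $\mathfrak g''=\langle\mathcal W(\Omega)\rangle$. First I would stratify according to the dimension of $\mathfrak h\cap\mathfrak g''$. If $\mathfrak h\subseteq\mathfrak g''$ we get the last family $\langle\mathcal W(w),\mathcal W(1)\rangle$ directly. If $\dim(\mathfrak h\cap\mathfrak g'')=1$, then $\mathfrak h$ contains exactly one independent $\mathcal W$-generator, which by the one-dimensional classification may be normalized to $\mathcal W(1)$; the second generator projects nontrivially to $\mathfrak r$ and may first be reduced, using the adjoint action, to one of the canonical one-dimensional representatives of Theorem~\ref{thm:IDFMOneDimensionalSubalgebras}, yielding the families with $\mathcal W(1)$ as the second element. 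The generic stratum $\dim(\mathfrak h\cap\mathfrak g'')=0$ is the main body of the list: here both projections to $\mathfrak r$ are independent, so $\mathfrak h$ maps isomorphically onto a two-dimensional subalgebra of the five-dimensional solvable radical~$\mathfrak r$, and I would first classify the two-dimensional subalgebras of~$\mathfrak r$ up to the automorphism group $\mathrm{Aut}(\mathfrak r)$ (equivalently the adjoint action of~$G$ on the finite-dimensional part), then lift each representative by attaching admissible $\mathcal W(\Omega_i)$ tails constrained by closure.

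The recursion within each stratum follows the megaideal-induced order $\mathcal D\succ\mathcal G\succ\mathcal P^t\succ\mathcal P^x\succ\mathcal P^v$: I would pick the first generator so that its leading coefficient is as high as possible in this order, normalize it via scaling and the pushforwards~\eqref{eq:IDFMPushforwardActiononGeneratingElements}, and then use the residual freedom in $G$ that stabilizes the first generator to simplify the second. The closure condition fixes which combinations are admissible; for instance, when the leading term is $\mathcal D$ the bracket $[\mathcal P^t,\mathcal D]=\mathcal P^t$ and $[\mathcal P^x,\mathcal D]=\mathcal P^x$ force the second generator's $\mathcal P^t$- and $\mathcal P^x$-content into prescribed patterns, which is exactly what separates the $\mathcal D$-headed families with $\mathcal P^t$, $\mathcal P^x$, $\mathcal G$, or $\mathcal P^v$ as the second generator. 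The residual discrete ambiguities are absorbed into the $\delta$-parameters taking values in $\{0,1\}$, whose normalization uses the remaining stabilizer together with the discrete symmetries from the Corollary to Theorem~\ref{thm:IDFMSymmetryGroup}.

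The hard part will be twofold. First, on the $\mathcal W$-tails one must track the interplay between the adjoint action $\mathscr W_*(W)\mathcal W(\Omega)=\mathcal W(\tilde\Omega)$, which can normalize a single nonzero $\Omega_i$ to $1$ but generally cannot simultaneously normalize both tails on a two-dimensional subalgebra; the closure bracket $\mathcal W(\Omega_1\Omega_{2w}-\Omega_2\Omega_{1w})$ must land in $\mathfrak h$, and disentangling when this forces one tail to be a specific function (the appearance of $\mathcal W(w)$ versus $\mathcal W(\delta)$ in the list reflects precisely this) requires care. Second, the bookkeeping of residual stabilizers is delicate: after normalizing the first generator one must correctly identify which one-parameter subgroups of~$G$ still act, and verify that no two listed representatives are $G$-equivalent and that the list is exhaustive. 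I expect this exhaustiveness-and-nonredundancy verification, rather than any single computation, to be the principal obstacle, and I would discharge it by checking that the constraints stated for the seventh family (the mutual exclusions between $a$, $\delta_1$, $\delta_2$, $\delta_4$) are exactly the ones dictated by closure and by the residual adjoint action.
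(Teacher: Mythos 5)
Your overall strategy is essentially the paper's: a general pair of generators, the closure condition, and a stratification by how the subalgebra meets the infinite-dimensional ideal $\mathfrak g''=\langle\mathcal W(\Omega)\rangle$. Your strata $\dim(\mathfrak h\cap\mathfrak g'')=0,1,2$ correspond exactly to the paper's split by the rank of the $2\times5$ coefficient matrix $A$ (namely $\dim(\mathfrak h\cap\mathfrak g'')=2-\rk A$, with the paper treating $\rk A<2$ as one case by moving one basis element into $\mathfrak g''$), and the normalization tools are the same: the pushforwards~\eqref{eq:IDFMPushforwardActiononGeneratingElements}, the megaideal-induced ordering, normalization of one tail to $\mathcal W(1)$, and the reduction of the other tail to an affine function of $w$ via closure.

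There is, however, one genuine flaw, in the generic stratum. You assert that classifying the two-dimensional subalgebras of $\mathfrak r$ up to $\mathrm{Aut}(\mathfrak r)$ is ``equivalently'' classifying them up to the adjoint action of~$G$. For this algebra the equivalence fails, and the paper's remark following Theorem~\ref{thm:IDFMSymmetryGroup} records the discrepancy explicitly: $\mathrm{Aut}(\mathfrak r)$ contains automorphisms with $b_{22}\ne1$ and $b_{55}\ne1$ (independent scalings of $\mathcal G$ and $\mathcal P^v$) and with $b_{51},b_{52}\ne0$ (adding multiples of $\mathcal P^v$ to the images of $\mathcal D$ and $\mathcal G$), whereas the pushforwards of point symmetries induce only the subgroup with $b_{22}=b_{55}=1$ and $b_{51}=b_{52}=0$; indeed, in the proof of Theorem~\ref{thm:IDFMSymmetryGroup} the conditions~\eqref{eq:IDFMPushforwardAction} force $a_{21}=a_{51}=0$ and the direct method then forces $a_{22}=a_{55}=1$, $a_{52}=0$, and the proof of the present theorem states outright that $\mathcal P^v$-summands cannot be removed by the adjoint action. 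Consequently, classifying in $\mathfrak r$ up to the full group $\mathrm{Aut}(\mathfrak r)$ and then lifting would wrongly identify, for instance, $\langle\mathcal D+a\mathcal P^v,\,\mathcal G+b\mathcal P^v\rangle$ for all values of $(a,b)$ with $\langle\mathcal D,\mathcal G\rangle$, and would normalize $a$ in $\mathcal D+a\mathcal G$ to $\{0,1\}$ --- precisely the continuous parameters $a$, $b$ that appear as essential in the list~\eqref{IDFMTwoDimensionalSubalgebras} would vanish, so your list would be strictly coarser than an optimal list of $G$-inequivalent subalgebras, hence wrong for the purpose of Lie reduction. The repair is to work throughout with the adjoint action of $G$ only, i.e.\ with the group generated by~\eqref{eq:IDFMPushforwardActiononGeneratingElements}, handling the $\mathcal W$-tails simultaneously with the finite-dimensional parts (the closure condition couples them); this is exactly what the paper does. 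A secondary, smaller inaccuracy of the same kind: in the stratum $\dim(\mathfrak h\cap\mathfrak g'')=1$ the second generator cannot simply be reduced to a representative from Theorem~\ref{thm:IDFMOneDimensionalSubalgebras}, because the stabilizer of $\mathcal W(1)$ in $G$ is smaller than $G$ and closure against $\mathcal W(1)$ forces tails proportional to $\mathcal W(w)$ modulo $\mathcal W(1)$; this is why the families with second element $\mathcal W(1)$ in~\eqref{IDFMTwoDimensionalSubalgebras} carry tails $c\mathcal W(w)$ rather than $\mathcal W(\delta)$, a difference your proposal gestures at but does not resolve.
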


\begin{proof}
We start with a two-dimensional subalgebra $\mathfrak s:=\langle Q_1,Q_2\rangle$
spanned by two linearly independent vector fields from~$\mathfrak g$ of the most general form,
\begin{gather*}
Q_1=a_1\mathcal D+a_2\mathcal G+a_3\mathcal P^t +a_4\mathcal P^x+ a_5\mathcal P^v+\mathcal W(\Omega^1),\\
Q_2=b_1\mathcal D+b_2\mathcal G+b_3\mathcal P^t +b_4\mathcal P^x+ b_5\mathcal P^v+\mathcal W(\Omega^2).
\end{gather*}
Within the classification procedure, we should take into account
the condition of closedness of~$\mathfrak s$ with respect to the Lie bracket,
$[Q_1,Q_2]\in\mathfrak s$, which we will briefly call the $\mathfrak s$-condition below.
Since $\mathfrak g=\langle\mathcal D,\mathcal G,\mathcal P^t, \mathcal P^x
\rangle\oplus\langle\mathcal P^v\rangle\oplus\langle\mathcal W(\Omega)\rangle$,
summands with $\mathcal P^v$ or $\mathcal W(\Omega)$, if they are in $Q^1$ or~$Q^2$,
cannot be deleted by the adjoint action of~$G$.
Nevertheless, we can always set $\Omega^i=1$
for nonzero~$\Omega^i$ with fixed $i\in\{1,2\}$ by the action of $\mathscr W_*\big(\int 1/\Omega^i\,{\rm d}w\big)$.
If the parameter functions~$\Omega^1$ and~$\Omega^2$ are linearly independent,
linearly recombining~$Q_1$ and~$Q_2$, which is consistent with the further classification procedure,
reduces the $\mathfrak s$-condition to $[Q_1,Q_2]\in\langle Q_2\rangle$.
Setting $\Omega^2=1$, we can thus assume $\Omega^1=c_1w+c_2$, where $c_1$ and~$c_2$ are constants.
In particular, $\Omega^2=\const$ if $Q^1$ and~$Q^2$ commute.
We can consider only $\Omega$'s of the above form from the very beginning.
If $a_1\ne0$, we can set $a_3=a_4=0$
successively using $\mathscr P^t_*(a_3)$ and $\mathscr P^x_*(a_4-a_2a_3)$.

For the sake of efficient classification of two-dimensional subalgebras of the algebra~$\mathfrak g$,
it is convenient to consider the coefficient matrix
\[A=\left(\begin{matrix}a_1&a_2&a_3&a_4&a_5\\b_1&b_2&b_3&b_4&b_5\end{matrix}\right).\]
The classification splits into two cases, $\rk A=2$ and $\rk A<2$.

\medskip\par\noindent
\textbf{I.} $\boldsymbol{\rk A=2.}$
On the set of index pairs $J:=\{(i,j)\mid 1\leqslant i<j\leqslant5\}$
we introduce the lexicographical order with the alphabet ordering $1\succ2\succ3\succ4\succ5$,
which is consistent with the ordering of vector fields spanning the algebra~$\mathfrak g$.
Then we further split the case $\rk A=2$ into subcases that are labeled by index pairs in~$J$
and each of which is singled out by the additional condition
$\rk A_{i',j'}<2$, $(i',j')\succ(i,j)$, $\rk A_{i,j}=2$ with the associated index pair $(i,j)$.
Here $A_{i,j}$ denotes the $2\times2$ matrix  composed by the $i$th and $j$th columns of $A$.
Note that $a_i=b_i=0$ if both $\rk A_{i,j}<2$ for $i<j$ and $\rk A_{j,i}<2$ for $j<i$.
In the $(i,j)$th case, up to linearly recombining~$Q_1$ and~$Q_2$
we can assume that $A_{i,j}=I$ and $b_1=\dots=b_{j'}=0$, $j'<j$.
Here and in what follows $I$ is the $2\times2$ unit matrix.
As a result, we should consider the following cases.

1.\ $A_{1,2}=I$, $a_3=a_4=0$.
The $\mathfrak s$-condition yields $b_3=b_4=0$, which results in
the first subalgebra (more precisely, the first family of subalgebras)
in the list~\eqref{IDFMTwoDimensionalSubalgebras}.

2.\ $A_{1,3}=I$, $b_2=0$, $a_4=0$.
Using $\mathscr G_*(b_4)$ we set $b_4=0$.
Then the $\mathfrak s$-condition implies $a_2=b_5=0$ and
$\Omega^2\Omega^1_w-\Omega^1\Omega^2_w=\Omega^2$.
Note that here we use the $\mathfrak s$-condition before employing the pushforward~$\mathscr W_*$.
This leads to the second and third subalgebras, depending on whether~$\Omega^2$ vanishes or not;
for the latter we should additionally act on~$\mathfrak s$ with the pushforward of a shift of~$w$.

3.\ $A_{1,4}=I$, $b_2=b_3=0$, $a_3=0$.
We employ the $\mathfrak s$-condition in the same way as in the previous case,
obtaining the fourth and fifth subalgebras.

In all the other cases with $\rk A=2$, the basis elements $Q_1$ and~$Q_2$ commute in view of the $\mathfrak s$-condition.
Hence $\Omega^1\Omega^2_w-\Omega^2\Omega^1_w=0$, and we can simultaneously set
both the coefficients~$\Omega^1$ and~$\Omega^2$ to be constants.

4.\ $A_{1,5}=I$, $b_2=b_3=b_4=0$, $a_3=a_4=0$.
Thus, we derive the sixth subalgebra.

5.\ $A_{2,3}=I$, $a_1=b_1=0$.
$[Q_1,Q_2]=\mathcal P^x+\mathcal W(\Omega^1\Omega^2_w-\Omega^2\Omega^1_w)\notin\langle Q_1,Q_2\rangle$ and hence
$\langle Q_1,Q_2\rangle$ is not a subalgebra.

6.\ $A_{2,4}=I$, $a_1=b_1=b_3=0$.
One of the coefficients~$a_3$ or~$b_5$ can be scaled to 1 (if nonzero)
using~$\mathscr D_*(a_3^{-1})$ for~$a_3$
or~$\mathscr D_*(b_5)$ and rescaling of~$Q^2$ for $b_5$.
We obtain the seventh algebra.
The specific gauge for parameters of this case from the theorem is set by composing
the pushforward $\mathscr P^t_*(c)$ with the replacement $Q^1$ by $Q^1+cQ^2$,
where $c=-a/\delta_4$ or $c=-\delta_1/\delta_2$, respectively.

7.\ $A_{2,5}=I$ and $a_1=b_1=b_3=b_4=0$.
We act on~$\mathfrak s$ by $\mathcal P^t_*(a_4)$ and, if $a_3\ne0$, by $\mathscr D_*(a_3^{-1})$
to set $a_4=0$ and $a_3=1$, respectively.
This gives the eighth subalgebra.

8.\ $A_{3,4}=I$, $a_i=b_i=0$, $i=1,2$.
One of $a_5$ and $b_5$ (if nonzero) can be scaled to 1
acting on~$\mathfrak s$ with $\mathscr D_*(c)$
and replacing the basis element~$Q_i$ by $c^{-1}Q_i$,
where $c=a_5$, $i=1$ and $c=b_5$, $i=2$, respectively.
As a result, we have the ninth subalgebra.

9.\ $A_{3,5}=I$, $a_i=b_i=0$, $i=1,2$, $b_4=0$.
Setting $a_4=0$ with $\mathcal G_*(-a_4)$ gives the tenth subalgebra.

10.\ $A_{4,5}=I$, $a_i=b_i=0$, $i=1,2,3$.
This case is associated with the eleventh subalgebra.

\medskip\par\noindent
\textbf{II.} $\boldsymbol{\rk A<2.}$
Linearly combining~$Q_1$ and~$Q_2$ in this case, we can set $b_1=\dots=b_5=0$.
Since then $\Omega^2\ne0$, we can set $\Omega^2=1$
using the pushforward~$\mathscr W_*\big(\int 1/\Omega^2\,{\rm d}w\big)$.
Up to linearly recombining~$Q_1$ and~$Q_2$,
the $\mathfrak s$-condition implies~$\Omega^1=cw$ for a constant~$c$.
The further classification of inequivalent forms for~$Q_1$, which gives the rest of listed subalgebras,
is similar to the classification of one-dimensional subalgebras,
and thus we omit~it.
\end{proof}

\section{Reductions}\label{sec:IDFMReductions}

Although in Section~\ref{sec:IDFMSolutionsViaEssentialSubsystem} below
we describe the entire set of local solutions of the system~$\mathcal S$ in implicit form
in terms of solutions of the telegraph equation,
it is instructive to find invariant solutions as well.
We employ the optimal lists of one- and two-dimensional subalgebras of the algebra~$\mathfrak g$
from Theorems~\ref{thm:IDFMOneDimensionalSubalgebras} and~\ref{thm:IDFMTwoDimensionalSubalgebras},
respectively, to obtain Lie reductions associated with them.
Lie ansatzes constructed using one-dimensional subalgebras reduce
the system~$\mathcal S$ with the two independent variables~$(t,x)$
to systems of ODEs.
The last two families of subalgebras from Theorem~\ref{thm:IDFMOneDimensionalSubalgebras}
are not relevant to the framework of Lie reductions
since the transversality condition does not hold for them.

Lie ansatzes constructed using two-dimensional subalgebras reduce
the system~$\mathcal S$ to systems of algebraic equations.
Note that constraints imposed by invariance with respect to two-parameter transformation groups
are too restrictive for solutions of PDEs with two independent variables.
This is why such solutions are not of much significance.
Nevertheless, we are going to perform the Lie reduction with respect to
a two-dimensional subalgebra of the algebra~$\mathfrak g$ as an example.
It is obvious that in the list~\eqref{IDFMTwoDimensionalSubalgebras}
of families of inequivalent two-dimensional subalgebras of the algebra~$\mathfrak g$,
only subalgebras from the first to fifth, and seventh and ninth families are appropriate for Lie reduction.
Subalgebras from other families, which do not satisfy the transversality condition,
can be useful for constructing partially invariant solutions~\cite{Ovsiannikov1982},
which we also illustrate by an example.

Within this section, $c_1$, $c_2$ and~$c_3$ are arbitrary constants,
and prime denotes the derivative with respect to~$\omega$.

\medskip

$\boldsymbol{1.\ \langle\mathcal D+a\mathcal G+b\mathcal P^v+\mathcal W(\delta_1)\rangle.}$
A Lie ansatz constructed with this subalgebra has the form
$u=\phi(\omega)+x/t+a$, $v=\chi(\omega)+b\ln|t|$, $w=\psi(\omega)+\delta_1\ln|t|$,
where $\omega=x/t-a\ln|t|$. The corresponding reduced system of ODEs is
\begin{gather*}
\phi'\phi+\chi'+\phi+a=0,\quad
\phi\chi'+\phi'+b+1=0,\quad
\phi\psi'+\delta_1=0.
\end{gather*}
For integrating this system, it is convenient to separately consider the singular and general cases.

A.\ $\phi^2+a\phi-b-1=0$,
i.e., $\phi=\mu$, where $\mu$ is a root of this algebraic equation.
Then $\chi=-(a+\mu)\omega+c_2$. For~$\psi$ we additionally have two cases.
If $\mu=0$, then  $\psi$ is an arbitrary function of $\omega$,
as well as $b=-1$ and $\delta_1=0$ are the only possible values of these parameters.
Otherwise, $\psi=-\delta_1\omega/\mu+c_3$.

B.\ Otherwise, $\phi^2+a\phi-b-1\ne0$. We also have $\phi^2\ne1$,
and the reduced system is equivalent to the system
$\phi'=(\phi^2+a\phi-b-1)/(1-\phi^2)$, $\chi'=(b\phi-a)/(1-\phi^2)$, $\psi'=-\delta_1/\phi$.
As a result, we construct the general solution of the reduced system in parametric form,
\begin{gather}\label{eq:IDFMExpressionsForOmegaChiPsi}
\omega=\int\frac{(1-\phi^2)\,{\rm d}\phi}{\phi^2+a\phi-b-1},\quad
\chi=\int\frac{(b\phi-a)\,{\rm d}\phi}{\phi^2+a\phi-b-1},\quad
\hat\psi=\int\frac{(1-\phi^2)\,{\rm d}\phi}{\phi(\phi^2+a\phi-b-1)},
\end{gather}
and $\psi=-\delta_1\hat\psi$.
All the three integrals are integrals of rational functions, which can be easily computed.

\medskip

$\boldsymbol{2.\ \langle\mathcal G+\delta_2\mathcal P^t+b\mathcal P^v+\mathcal W(\delta_1)\rangle.}$
Let us consider separately the cases $\delta_2=1$ and $\delta_2=0$.

A.\ $\delta_2=1$.
A Lie ansatz constructed for this value of~$\delta_2$ and the associated reduced system of ODEs are
$u=\phi(\omega)+t$, $v=\chi(\omega)+bt$, $w=\psi(\omega)+\delta_1t$, where $\omega=x-t^2/2$,
and
\begin{gather*}
\phi\phi'+\chi'+1=0,\quad
\phi\chi'+\phi'+b=0,\quad
\phi\psi'+\delta_1=0.
\end{gather*}
For $b=\varepsilon:=\pm1$, the reduced system admits the singular solution
$\phi=\varepsilon$,  $\chi=-\omega+c_2$ and $\psi=-\delta_1\varepsilon\omega+c_3$.
In the general case, we exclude $\chi'$ from the first two equations of the system
and derive the equation $\phi'=(\phi-b)/(1-\phi^2)$.
Integrating this equation and then using the obtained solution for integrating
the equations for~$\chi$ and~$\psi$,
we find the general solution of the reduced system in parametric form,
\begin{gather*}
\omega=-\frac{\phi^2}2-b\phi-(b^2-1)\ln|\phi-b|+c_1,\quad
\chi=b\phi+(b^2-1)\ln|\phi-b|+c_2,\\
\psi=\delta_1\phi+\frac{\delta_1}{b}\ln|\phi|+\frac{\delta_1}{b}(b^2-1)\ln|\phi-b|+c_3.
\end{gather*}

B.\ $\delta_2=0$.
A Lie ansatz constructed for this value of~$\delta_2$ and the associated reduced system of ODEs are
$u=\phi(\omega)+x/t$, $v=\chi(\omega)+bx/t$, $w=\psi(\omega)+\delta_1x/t$, where $\omega=t$, and
\[
\omega\phi'+\phi+b=0,\quad
\omega\chi'+b\phi+1=0,\quad
\omega\psi'+\delta_1\phi=0.
\]
This yields $\phi=c_1/\omega-b$, $\chi=(b^2-1)\ln|\omega|+c_1b/\omega+c_2$ and
$\psi=\delta_1b\ln|\omega|+c_1/\omega+c_3$.

\medskip

$\boldsymbol{3.\ \langle\mathcal P^t+\delta_2\mathcal P^v+\mathcal W(\delta_1)\rangle.}$
A Lie ansatz $u=\phi(\omega)$, $v=\chi(\omega)+\delta_2t$, $w=\psi(\omega)+\delta_1t$, where $\omega=x$,
constructed with this subalgebra reduces the system~\eqref{eq:IDFMModelForGroupAnalysis}
to the system of ODEs
\begin{gather*}
\phi\phi'+\chi'=0,\quad
\phi\chi'+\phi'+\delta_2=0,\quad
\phi\psi'+\delta_1=0.
\end{gather*}
Combining the first two equations to exclude $\chi'$,
we derive the equations $\delta_2+\phi'=\phi^2\phi'$,
which integrates to $\phi^3/3-\phi=\delta_2\omega+c_1$.

If $\delta_2=0$, then $\phi$ is just a constant satisfying the algebraic equation.
Hence $\chi$ is another constant.
If $\phi\ne0$, then $\psi$ is linear function with respect to $\omega$.
Otherwise, $\delta_1$ vanishes and $\psi(\omega)$ is an arbitrary smooth function of its argument.

If $\delta_2\ne0$, we construct the general solution of the reduced system in parametric form
\[
\omega=\frac{\phi^3}{3\delta_2}-\frac\phi{\delta_2}-\frac{c_1}{\delta_2},\quad
\chi=\frac{\phi^2}2+c_2,\quad
\psi=\frac{\delta_1}2\phi^2-\delta_1\ln|\phi|+c_3.
\]

\noprint{
\medskip

$\boldsymbol{4.\ \langle\mathcal P^x+\delta_2\mathcal P^v+\mathcal W(\delta_1)}\rangle$.
A Lie ansatz constructed using this algebra and the associated reduced system of ODEs are
$u=\phi(\omega)$, $v=\chi(\omega)+\delta_2x$, $w=\psi(\omega)+\delta_1x$, where $\omega=t$, and
\begin{gather*}
\phi'+\delta_2=0,\quad\chi'+\delta_2\phi=0,\quad
\psi'+\delta_1\phi=0.
\end{gather*}
This immediately gives $\phi=-\delta_2\omega+c_1$, $\chi=\delta_2^2\omega^2/2-c_1\delta_2\omega+c_2$
and $\psi=\delta_1\delta_2\omega^2/2-c_1\delta_1\omega+c_3$.
}

\medskip

$\boldsymbol{4.\ \langle\mathcal D+a\mathcal G+b\mathcal P^v+\mathcal W(\delta_1),\mathcal P^x\rangle.}$
The ansatz $u=a\ln|t|+c_1$, $v=b\ln|t|+c_2$, $w=\delta_1\ln|t|+c_3$
constructed with this subalgebra gives only trivial constant solutions
since the corresponding reduced system is $a=b=\delta_1=0$,
which is rather the condition for consistency of this ansatz with the system~$\mathcal S$.

\medskip

$\boldsymbol{5.\ \langle\mathcal D+a\mathcal G+b\mathcal P^v,\mathcal W(1)\rangle.}$
This subalgebra does not satisfy the transversality condition
in view of the form of the second basis vector field~$\mathcal W(1)$.
Therefore, this subalgebra cannot be used within the framework of Lie reductions.
At the same time, it can be used for finding partially invariant solutions
of the system~\eqref{eq:IDFMModelForGroupAnalysis}.
The existence of partially invariant solutions for the system~\eqref{eq:IDFMModelForGroupAnalysis}
is guaranteed by the fact that this system is only partially coupled
(see Section~\ref{sec:IDFMEssentialSubsystem} below for details).
We can reduce the essential subsystem~\eqref{eq:IDFMEquation1}--\eqref{eq:IDFMEquation2}
with a Lie ansatz constructed using the one-dimensional subalgebra of~$\mathfrak g$
spanned by the first basis vector field of the two-dimensional algebra under study.
Then we find solutions of the reduced system of ODEs,
merge them with the corresponding ansatz to construct solutions
of~\eqref{eq:IDFMEquation1}--\eqref{eq:IDFMEquation2}
and substitute the later solutions to the equation~\eqref{eq:IDFMEquation3}.
The resulting equation is a first-order linear partial differential equation
with respect to~$w$, which is integrated separately.
For constructing the required solutions
of the essential subsystem~\eqref{eq:IDFMEquation1}--\eqref{eq:IDFMEquation2},
it in fact suffices to use the expressions for~$(u,v)$ obtained
in the course of the first reduction of this section.

A.\ $u=x/t+a+\mu$, $v=-(a+\mu)x/t+(b+a^2+a\mu)\ln|t|+c_2$.
Then the function~$w$ satisfies the equation $w_t+(x/t+a+\mu)w_x=0$,
the general solution of which is $w=\psi(\varpi)$,
where~$\psi$ runs through the set of smooth functions of $\varpi=x/t-(a+\mu)\ln|t|$.

B.\ $u=\phi(\omega)+x/t+a$, $v=\chi(\omega)+b\ln|t|$, where $\omega=x/t-a\ln|t|$,
and $\phi$ and~$\chi$ are defined by~\eqref{eq:IDFMExpressionsForOmegaChiPsi}.
After changing the independent variables $(t,x)$ to $(\tau,\omega)$, where $\tau=\ln|t|$,
the equation~\eqref{eq:IDFMEquation3} with the above value of~$u$
takes the form $w_\tau+\phi(\omega)w_\omega=0$,
and thus its general solution is $w=\psi(\varpi)$,
where~$\psi$ runs through the set of smooth functions of
$\varpi=\tau-\int{\rm d}\omega/\phi=\ln|t|-\hat\psi$,
and $\hat\psi$ is defined by~\eqref{eq:IDFMExpressionsForOmegaChiPsi} as well.

\section{Group analysis of the essential subsystem}\label{sec:IDFMEssentialSubsystem}

\looseness=-1
The system of equations~\eqref{eq:IDFMEquation1}--\eqref{eq:IDFMEquation2},
which we denote for brevity by~$\mathcal S_0$, does not involve the unknown function~$w$,
i.e., the system~$\mathcal S$ is only partially coupled.
This is why we can solve the system~$\mathcal S_0$ in the first place
and substitute the obtained $u$ into the equation~\eqref{eq:IDFMEquation3},
which then becomes a linear first-order PDE with respect to~$w$.
Finding a partial solution of the equation~\eqref{eq:IDFMEquation3}
and acting on it by the transformation~$\mathscr W(W)$,
we obtain a family of solutions parameterized by an arbitrary function.
Moreover, the system~$\mathcal S_0$ is simpler for finding exact solutions than~$\mathcal S$ because it has fewer dependent variables
and admits a larger Lie symmetry algebra.

To compute Lie symmetries of the system~$\mathcal S_0$,
we again apply the Lie infinitesimal method, cf.\ Section~\ref{sec:IDFMLieSymmetries}.
We look for the generator $Q=\tau\p_t+\xi\p_x+\eta\p_u+\theta\p_v$ of a one-parameter point symmetry group,
where the components~$\tau$, $\xi$, $\eta$ and~$\theta$ depend on $t$, $x$, $u$ and $v$.
For the system~$\mathcal S_0$ and the generator~$Q$, the infinitesimal invariance criterion reads
\[
 Q^{(1)}\left(u_t+uu_x+v_x\right)|_{\mathcal S_0}=0,\qquad
 Q^{(1)}\left(v_t+uv_x+u_x\right)|_{\mathcal S_0}=0.
\]
The first prolongation $Q^{(1)}$ of the vector field~$Q$ is given by
\[
 Q^{(1)}=Q+\eta^{(1,0)}\p_{u_t}+\eta^{(0,1)}\p_{u_x}+\theta^{(1,0)}\p_{v_t}+\theta^{(0,1)}\p_{v_x},
\]
where the components~$\eta^{(1,0)}$, $\eta^{(0,1)}$, $\theta^{(1,0)}$ and~$\theta^{(0,1)}$
of the prolonged vector field~$Q^{(1)}$ are obtained from the general prolongation formula~\cite{Olver1993,Ovsiannikov1982}.
Then the infinitesimal invariance criterion implies
\begin{gather*}
\eta^{(1,0)}  +u\eta^{(0,1)}  +\eta u_x+\theta^{(0,1)}=0,\\
\theta^{(1,0)}+u\theta^{(0,1)}+\eta v_x+\eta^{(0,1)}=0,
\end{gather*}
when substituting $u_t=-uu_x-v_x$ and~$v_t=-uv_x-u_x$.
Splitting these equations with respect to the parametric derivatives~$u_x$ and~$v_x$
yields the following system of determining equations
\begin{gather}\label{eq:IDFMDeterminingSystem2}\arraycolsep=0ex
\begin{array}{ll}
 R^1:=\eta_t+u\eta_x+\theta_x=0,\quad
&R^2:=\theta_t+u\theta_x+\eta_x=0,\\[1ex]
 R^3:=\eta_u-\theta_v=0,\quad
&R^4:=\theta_u-\eta_v=0,\\[1ex]
 R^5:=\tau_t+2u\tau_x-\xi_x=0,\quad
&R^6:=\xi_t+(u^2-1)\tau_x-\eta=0,\\[1ex]
 R^7:=u\tau_u-\tau_v-\xi_u=0,\quad
&R^8:=u\tau_v-\tau_u-\xi_v=0.
\end{array}
\end{gather}
We derive differential consequences of the system~\eqref{eq:IDFMDeterminingSystem2},
which are then (mainly, implicitly) employed for simplifying successive differential consequences of this system,
\begin{gather*}
R^1_u-R^2_v-R^3_t-uR^3_x-R^4_x=\eta_x=0,\quad
R^2_u-R^1 _v-R^4_t-uR^4_x-R^3_x=\theta_x=0,\\
R^9:=R^7_v-R^8_u=\tau_{uu}-\tau_{vv}-\tau_v=0,\ \
R^{10}:=R^8_v-R^7_u+uR^9=\xi_{uu}-\xi_{vv}-\tau_u-u\tau_v=0,\\
R^5_{uu}-R^5_{vv}-R^5_v-R^9_t-2uR^9_x+R^8_x=2\tau_{xu}=0,\quad
R^5_u-R^7_x=\tau_{tu}+2\tau_x+\tau_{xv}\noprint{+u\tau_{xu}}=0,\\
R^5_v-R^8_x=\tau_{tv}+u\tau_{xv}\noprint{+\tau_{xu}}=0,\quad
R^3_u+R^4_v+R^6_{uu}-R^6_{vv}-R^{10}_t-u^2R^9_x=\tau_{tu}+2\tau_x-\tau_{xv}\noprint{+4u\tau_{xu}+u^2\tau_{xuu}+u\tau_{tv}+u^2\tau_{xv}}=0,\\
R^6_u+R^7_t=u(\tau_{tu}+2\tau_x)-\tau_{tv}-\eta_u\noprint{+u^2\tau_{xu}}=0,\quad
R^6_v+R^8_t=-\eta_v-\tau_{tu}\noprint{+u\tau_{tv}+u^2\tau_{xv}}=0,
\end{gather*}
and hence $R^1=\eta_t=0$, $R^2=\theta_t=0$,
whereas the recombination of the last five differential consequences of the above ones gives
$\tau_{tv}=0$, $\tau_{xv}=0$, $\tau_{tu}+2\tau_x=0$, $\eta_u=0$ and $\eta_v=2\tau_x$.
Differentiating the last equation with respect to~$t$ and~$x$, we get $\tau_{xx}=\tau_{tx}=0$, i.e., $\tau_x=\const$.
Finally, we obtain the consequences
\begin{gather*}
R^3=-\theta_v=0,\quad
R^4=\theta_u-2\tau_x=0,\quad
R^6_t=\xi_{tt}=0,\quad
R^6_x=\xi_{tx}=0,\quad
R^5_t=\tau_{tt}=0.
\end{gather*}
Hence, the general solution of the system~\eqref{eq:IDFMDeterminingSystem2} is
\begin{gather*}
\tau=A_4(x-2ut)+A_1t+\tau^0(u,v),\quad
\xi=A_4t(2v-u^2+1)+A_2t+A_1x+\xi^0(u,v),\\
\eta=2A_4v+A_2,\quad
\theta=2A_4u+A_3,
\end{gather*}
where $A_1$, \dots, $A_4$ are arbitrary constants
and $(\tau^0,\xi^0)$ runs through the solution set of the system $u\tau^0_u-\tau^0_v=\xi^0_u$, $u\tau^0_v-\tau^0_u=\xi^0_v$.
This proves the following assertion.

\begin{theorem}\label{thm:IDFMLieSymsOfEssSubsystem}
The maximal Lie invariance algebra~$\mathfrak g_0$ of the system~$\mathcal S_0$ is spanned by the vector fields
\begin{gather}
\begin{split}\label{eq:IDFMSymmetryOperatorsSystem2}
&\breve{\mathcal D}=t\p_t+x\p_x,\quad \breve{\mathcal G}=t\p_x+\p_u,\quad \breve{\mathcal P}(\tau^0,\xi^0)=\tau^0(u,v)\p_t+\xi^0(u,v)\p_x,\\
&\breve{\mathcal P}^v=\p_v,\quad \breve{\mathcal J}=\left(\tfrac12x-tu\right)\p_t+t\left(v-\tfrac12u^2+\tfrac12\right)\p_x+v\p_u+u\p_v,
\end{split}
\end{gather}
where $(\tau^0,\xi^0)$ runs through the solution set of the system $u\tau^0_u-\tau^0_v=\xi^0_u$, $u\tau^0_v-\tau^0_u=\xi^0_v$.
\end{theorem}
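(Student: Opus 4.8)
The plan is to apply the Lie infinitesimal method to~$\mathcal S_0$ exactly as in Section~\ref{sec:IDFMLieSymmetries}, but now with the reduced generator $Q=\tau\p_t+\xi\p_x+\eta\p_u+\theta\p_v$ whose components depend only on $(t,x,u,v)$. First I would prolong $Q$ to first order, impose invariance on the manifold $\mathcal S_0$ by substituting $u_t=-uu_x-v_x$ and $v_t=-uv_x-u_x$, and split the resulting identities with respect to the parametric first-order derivatives $u_x$ and $v_x$. This step is routine and yields the overdetermined linear determining system~\eqref{eq:IDFMDeterminingSystem2} of eight equations $R^1,\dots,R^8$ for the four unknown components.

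The substance of the proof lies in integrating this system, and I would organize it around the coupling structure. The pairs $R^3,R^4$ and $R^7,R^8$ are two structurally identical first-order linear systems: the former couples $\eta,\theta$ in $(u,v)$, the latter couples $\tau,\xi$ in $(u,v)$; the equations $R^1,R^2,R^5,R^6$ then tie the $(u,v)$-dependence to the $(t,x)$-dependence. The tactic is to manufacture differential consequences that decouple the second derivatives of~$\tau$. Cross-differentiating $R^7,R^8$ produces the wave-type relations $R^9=\tau_{uu}-\tau_{vv}-\tau_v=0$ and $R^{10}=\xi_{uu}-\xi_{vv}-\tau_u-u\tau_v=0$, and combining these with suitable $(u,v)$- and $(t,x)$-derivatives of $R^5,R^6$ forces, step by step, $\tau_{xu}=0$, $\tau_{tv}=0$, $\tau_{xv}=0$, $\tau_{tu}+2\tau_x=0$, and finally $\tau_{xx}=\tau_{tx}=0$, so that $\tau_x$ is constant. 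Feeding these back through $R^1$–$R^6$ then pins $\eta$ and $\theta$ to be affine in $(u,v)$ and fixes the $(t,x)$-dependence of $\tau$ and $\xi$ entirely.

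The result of this integration splits into two pieces. A four-parameter family, carried by the constants $A_1,\dots,A_4$, assembles into the finite-dimensional part spanned by $\breve{\mathcal D}$, $\breve{\mathcal G}$, $\breve{\mathcal P}^v$ and the special generator $\breve{\mathcal J}$ (the latter being the $A_4$-contribution). The residual freedom is an additive pair $(\tau^0,\xi^0)=(\tau^0(u,v),\xi^0(u,v))$ which, once the inhomogeneous $(t,x)$-contributions are removed, must satisfy precisely $u\tau^0_u-\tau^0_v=\xi^0_u$ and $u\tau^0_v-\tau^0_u=\xi^0_v$, i.e.\ the surviving form of $R^7,R^8$. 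Its compatibility condition $\p_v\xi^0_u=\p_u\xi^0_v$ reduces to the single linear second-order equation $\tau^0_{uu}-\tau^0_{vv}-\tau^0_v=0$ (the restriction of $R^9$ to $\tau^0$), which is of telegraph/Klein--Gordon type; it evidently has function-parameterized solutions, with $\xi^0$ then determined up to a constant. This is the source of the infinite-dimensional generator $\breve{\mathcal P}(\tau^0,\xi^0)$ and hence of the infinite-dimensionality of~$\mathfrak g_0$, consistent with the linearizability of $\mathcal S_0$ discussed later in the paper.

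The main obstacle I anticipate is the bookkeeping in the second paragraph: the determining system is strongly coupled, and it is easy to drop a consequence or to smuggle in a spurious constraint when forming the recombinations that annihilate the mixed second derivatives of~$\tau$. I would therefore record each consequence as an explicit linear combination of derivatives of the $R^i$, as in the displayed chain of identities, so that no relation is double-counted, and at the end I would verify that the span obtained closes under the Lie bracket and that the system for $(\tau^0,\xi^0)$ is genuinely underdetermined, thereby confirming both the completeness of the list and the infinite-dimensionality of~$\mathfrak g_0$.
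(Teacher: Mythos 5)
Your proposal is correct and follows essentially the same route as the paper's proof: the same determining system $R^1,\dots,R^8$, the same chain of differential consequences (including the key combinations $R^9=\tau_{uu}-\tau_{vv}-\tau_v=0$ and $R^{10}=\xi_{uu}-\xi_{vv}-\tau_u-u\tau_v=0$ forcing $\tau_{xu}=\tau_{tv}=\tau_{xv}=0$, $\tau_{tu}+2\tau_x=0$ and $\tau_x=\const$), and the same final integration into the four-parameter part $\breve{\mathcal D},\breve{\mathcal G},\breve{\mathcal P}^v,\breve{\mathcal J}$ plus the residual family $\breve{\mathcal P}(\tau^0,\xi^0)$ governed by $u\tau^0_u-\tau^0_v=\xi^0_u$, $u\tau^0_v-\tau^0_u=\xi^0_v$. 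Your added observation that the compatibility condition for this system is the telegraph-type equation $\tau^0_{uu}-\tau^0_{vv}-\tau^0_v=0$ is also consistent with the paper's subsequent linearization discussion.
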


Note that in terms of Riemann invariants the system for $(\tau^0,\xi^0)$ reads
$\xi^0_1=V^2\tau^0_1$, $\xi^0_2=V^1\tau^0_2$.

\begin{remark}
It is obvious that for any~$\Omega$ the system~$\mathcal S_0$ cannot admit
a counterpart of the Lie symmetry vector field~$\mathcal W(\Omega)$ of the system~$\mathcal S$.
The algebra~$\mathfrak g/\langle\mathcal W(\Omega)\rangle$ is isomorphic to the proper
subalgebra~$\langle\breve{\mathcal D},\breve{\mathcal G},\breve{\mathcal P}(1,0),\breve{\mathcal P}(0,1),\breve{\mathcal P}^v\rangle$ of~$\mathfrak g_0$,
where the spanning vector fields correspond to $\mathcal D$, $\mathcal G$, $\mathcal P^t$, $\mathcal P^x$, $\mathcal P^v$, respectively.
\end{remark}

By virtue of the structure of the algebra~$\mathfrak g_0$, the system~$\mathcal S_0$
can be linearized by a hodograph transformation, cf.~\cite{KumeiBluman1982}.
In general, every (1+1)-dimensional system of hydrodynamic type with two
dependent variables can be linearized by a two-dimensional hodograph transformation~\cite{RozhdestvenskiiJanenko1983}.
A hodograph transformation exchanges the roles of dependent and independent variables.
For the system~$\mathcal S_0$, the pairs of independent and dependent variables are to be interchanged,
i.e., we set $y=u$, $z=v$, $p=t$, $q=x$, where
$y$ and $z$ are the new independent variables and
$p$ and $q$ are the new dependent variables.
Then differentiating the equality $p(u,v)=t$, $q(u,v)=x$ with respect to~$t$ and~$x$ using the chain rule, we obtain
the system of linear algebraic equations for the first derivatives of~$(u,v)$
\begin{gather*}\arraycolsep=0ex
\begin{array}{ll}
p_yu_t+p_zv_t=1,\qquad & p_yu_x+p_zv_x=0,\\[1ex]
q_yu_t+q_zv_t=0,\qquad & q_yu_x+q_zv_x=1.
\end{array}
\end{gather*}
When assuming the nondegeneracy condition $\Delta:=p_yq_z-p_z q_y\ne0$, which is equivalent to $u_tv_x-u_xv_t\ne0$, the solution of this system~is
\begin{gather}\label{eq:IDFMHodographSubstitution}
u_t=\frac{q_z}{\Delta},\quad
u_x=-\frac{p_z}{\Delta},\quad
v_t=-\frac{q_y}{\Delta},\quad
v_x=\frac{p_y}{\Delta}.
\end{gather}
In the new variables, the system~$\mathcal S_0$ takes the form
\begin{gather}\label{eq:IDFMPotentialTelegraphEquation}
q_z-yp_z+p_y=0,\quad -q_y+yp_y-p_z=0.
\end{gather}
As expected, the system~\eqref{eq:IDFMPotentialTelegraphEquation} coincides with
the system for $(\tau^0,\xi^0)$ from Theorem~\ref{thm:IDFMLieSymsOfEssSubsystem}.
The cross-differentiation with respect to~$y$ and~$z$
leads to a differential consequence of the system~\eqref{eq:IDFMPotentialTelegraphEquation},
which is the telegraph equation for $p$ alone,
\begin{gather}\label{eq:IDFMTelegraphEquation}
p_{zz}+p_z=p_{yy}.
\end{gather}
In other words, the system~\eqref{eq:IDFMPotentialTelegraphEquation}
is a potential system for the equation~\eqref{eq:IDFMTelegraphEquation}.
The substitution $p=e^{-z/2}\tilde p$ reduces~\eqref{eq:IDFMTelegraphEquation} to
the Klein--Gordon equation
\begin{gather}\label{eq:IDFMKleinGordonEquation}
\tilde p_{yy}=\tilde p_{zz}-\frac{\tilde p}4.
\end{gather}

The maximal Lie invariance algebra~$\mathfrak g_{\rm KG}$
of the Klein--Gordon equation~\eqref{eq:IDFMKleinGordonEquation} is  well known~\cite{FushchichNikitin1994}.
It is spanned by the vector fields
\begin{gather*}
\hat{\mathcal P}^y=\p_y,\quad \hat{\mathcal P}^z=\p_z,\quad \hat{\mathcal J}=y\p_z+z\p_y,\quad
\hat{\mathcal D}=\tilde p\p_{\tilde p},\quad \hat{\mathcal P}(\psi)=\tilde\psi(y,z)\p_{\tilde p},
\end{gather*}
where $\tilde\psi$ runs through the solution set of~\eqref{eq:IDFMKleinGordonEquation}.
The maximal Lie invariance algebra of the telegraph equation~\eqref{eq:IDFMTelegraphEquation}
is spanned by the pushforwards of these vector fields under the transformation $p=e^{-z/2}\tilde p$,
where the variables~$y$ and~$z$ are not changed,
\begin{gather*}
\p_y,\quad \p_z-\frac12p\p_p,\quad y\p_z+z\p_y-\frac12yp\p_p,\quad p\p_p,\quad \psi(y,z)\p_p,
\end{gather*}
where $\psi=e^{-z/2}\tilde\psi(y,z)$ runs through the solution set of~\eqref{eq:IDFMTelegraphEquation}.
Although the maximal Lie invariance algebras~$\mathfrak g_0$ and~$\mathfrak g_{\rm KG}$
of the system~$\mathcal S_0$ and the Klein--Gordon equation~\eqref{eq:IDFMKleinGordonEquation}
look similar and there seems to be an obvious relation among the generating elements,
$\breve{\mathcal D}\sim\hat{\mathcal D}$,
$\breve{\mathcal G}\sim\hat{\mathcal P}^y$,
$\breve{\mathcal P}(\tau^0,\xi^0)\sim\hat{\mathcal P}(e^{z/2}\tau^0)$,
$\breve{\mathcal P}^v\sim\hat{\mathcal P}^z+\frac12\hat{\mathcal D}$,
$\breve{\mathcal J}\sim\hat{\mathcal J}$,
these algebras are in fact not isomorphic.
This can be explained using the following arguments.
Since the systems~$\mathcal S_0$ and~\eqref{eq:IDFMPotentialTelegraphEquation} are related by a hodograph transformation,
the maximal Lie invariance algebra~$\tilde{\mathfrak g}_0$ of the system~\eqref{eq:IDFMPotentialTelegraphEquation}
is the pushforward of~$\mathfrak g_0$ under this transformation.
Hence the algebras~$\mathfrak g_0$ and~$\tilde{\mathfrak g}_0$ coincide up to re-denoting variables.
The transition from the system~\eqref{eq:IDFMPotentialTelegraphEquation} to the equation~\eqref{eq:IDFMTelegraphEquation}
requires the projection $(y,z,p,q)\mapsto(y,z,p)$.
At the same time, the vector field~$\breve{\mathcal P}(0,1)$ is projected to the zero vector field.
Moreover, the vector field~$\breve{\mathcal J}$ is not projectable,
and thus the commutators
$[\breve{\mathcal J},\breve{\mathcal P}(\tau^0,\xi^0)]$ and
$[\hat{\mathcal J},\hat{\mathcal P}(e^{z/2}\tau^0)]$
are not related to each other in the above way.
This is why there is no direct relation between the set of Lie solutions of the systems~$\mathcal S_0$ and
that of the equation~\eqref{eq:IDFMKleinGordonEquation}
(resp.\ of the equation~\eqref{eq:IDFMTelegraphEquation}).

\section{Solution through linearization of the essential subsystem}\label{sec:IDFMSolutionsViaEssentialSubsystem}

Since the system~$\mathcal S$ is partially coupled and the subsystem~$\mathcal S_0$ can be linearized,
we can construct an implicit representation for the general solution of the system~$\mathcal S$
in terms of the general solution of the telegraph equation~\eqref{eq:IDFMTelegraphEquation}
or, equivalently, of the Klein--Gordon equation~\eqref{eq:IDFMKleinGordonEquation}.
Consider the potential system $p_y=\Upsilon_z+\Upsilon$, $p_z=\Upsilon_y$ of the equation~\eqref{eq:IDFMTelegraphEquation}.
The ``pseudopotential'' $\Upsilon=\Upsilon(y,z)$ is in fact related
to a usual potential of the equation~\eqref{eq:IDFMTelegraphEquation}:
the function $e^z\Upsilon$ is the potential
associated with the conservation-law characteristic $e^z$ of the equation~\eqref{eq:IDFMTelegraphEquation}.
Since the equation $p_z=\Upsilon_y$ is in conserved form,
we can introduce the second-level potential $\Phi=\Phi(y,z)$
defined by the system $\Phi_y=p$, $\Phi_z=\Upsilon$.
The equation $p_y=\Upsilon_z+\Upsilon$ implies
that the potential $\Phi$ satisfies the same telegraph equation~\eqref{eq:IDFMTelegraphEquation} as~$p$,
$\Phi_{zz}+\Phi_z=\Phi_{yy}$.
Substituting the expression $p=\Phi_y$ into the system~\eqref{eq:IDFMPotentialTelegraphEquation}
and using the equation for~$\Phi$,
we obtain a compatible system $(q-y\Phi_y+\Phi_z+\Phi)_y=0$, $(q-y\Phi_y+\Phi_z+\Phi)_z=0$ for~$q$.
The potential $\Phi$ is defined up to a constant summand.
Hence we can assume that $q=y\Phi_y-\Phi_z-\Phi$.
Then $\Delta:=p_yq_z-p_z q_y=(\Phi_{yy})^2-(\Phi_{yz})^2$,
and the nondegeneracy condition $\Delta\ne0$ reduces, on the solution set of the equation for~$\Phi$,
to the condition $\Phi_{yy}\ne\Phi_{yz}$ or, equivalently, $\Phi_{yy}\ne-\Phi_{yz}$.%
\footnote{%
The overdetermined system
$\Phi_{zz}+\Phi_z=\Phi_{yy}$,
$\Phi_{yy}=\varepsilon\Phi_{yz}$ with $\varepsilon\in\{-1,1\}$ implies that
$\Phi_{yyy}=\Phi_{yzz}+\Phi_{yz}=\varepsilon\Phi_{yyz}+\varepsilon\Phi_{yy}=\Phi_{yyy}+\varepsilon\Phi_{yy}$
and thus $\Phi_{yy}=\Phi_{yz}=0$ resulting also to $\Phi_{yy}=-\varepsilon\Phi_{yz}$.
}
To find the component of the general solution for~$w$,
we write the equation~\eqref{eq:IDFMEquation3} in the new variables,
$(q_z-yp_z)w_y-(q_y-yp_y)w_z=0$, or $p_yw_y-p_zw_z=0$
when taking into account the system~\eqref{eq:IDFMPotentialTelegraphEquation}.
We substitute the expression $p=\Phi_y$ into the last equation multiplied by~$e^z$
and take into account the equation for~$\Phi$.
As a result, we derive the equation $(e^z\Phi_z)_zw_y-(e^z\Phi_z)_yw_z=0$
meaning, in view of $e^z\Phi_z\ne\const$ that $w$ is a function of $e^z\Phi_z$ only.
Re-writing the expressions obtained for $p$, $q$ and~$w$ in terms of the initial variables,
we construct the following implicit representation of all solutions of the system~$\mathcal S$ with $u_tv_x-u_xv_t\ne0$:
$t=\Phi_u$, $x=u\Phi_u-\Phi_v-\Phi$, $w=W(e^v\Phi_v)$,
where the function $\Phi=\Phi(u,v)$ runs through the solution set of the equation $\Phi_{vv}+\Phi_v=\Phi_{uu}$,
and $W$ is an arbitrary function of its argument~$e^v\Phi_v$.
This representation can also be interpreted as parametric.

The condition $u_tv_x-u_xv_t=0$ for singular solutions of the system~$\mathcal S$
is easily seen to be equivalent to the constraint $v_x^{\,2}=u_x^{\,2}$,
i.e.,\ $v_x=\varepsilon u_x$ with $\varepsilon=\pm1$.
Then the subsystem~$\mathcal S_0$ also implies $v_t=\varepsilon u_t$.
Therefore, $v=\varepsilon u+c$, where $c$ is an arbitrary constant,
and the subsystem~$\mathcal S_0$ reduces to the single equation $u_t+uu_x+\varepsilon u_x=0$.
Suppose that $u\ne\const$, which is equivalent to the condition $u_x\ne0$.
We perform the one-dimensional hodograph transformation
with $s=t$ and $y=u$ being the new independent variables and
$q=x$ and $w$ being the new dependent variables.
Deriving the expressions for first derivatives of~$u$ and~$w$ in the hodograph variables,
\[
u_x=\frac1{q_y},\quad
u_t=-\frac{q_s}{q_y},\quad
w_x=\frac{w_y}{q_y},\quad
w_t=w_s-\frac{q_s}{q_y}w_y,
\]
we represent the equations of the reduced system,
$u_t+uu_x+\varepsilon u_x=0$ and $w_t+uw_x=0$, in these variables,
which read $q_s=y+\varepsilon$ and $q_yw_s=\varepsilon w_y$.
The equation $q_s=y+\varepsilon$ integrates to $q=(y+\varepsilon)s+\varphi(y)$,
where $\varphi$ is an arbitrary function of~$y$.
The general solution of the equation $q_yw_s=\varepsilon w_y$ with respect to~$w$
is an arbitrary function of a first integral~$I$ of the ODE ${\rm d}s/{\rm d}y=-\varepsilon(s+\varphi_y)$.
To derive a nice expression for this integral,
instead of~$\varphi$ we introduce a new arbitrary function $\Theta=\Theta(y)$ such that $\varphi=e^{-\varepsilon y}\Theta_y$,
which gives $I=e^{\varepsilon y}s+\varepsilon\Theta_y-\Theta$.
Returning to the initial variables, we obtain
$x-(u+\varepsilon)t=e^{-\varepsilon u}\Theta_u$, $v=\varepsilon u+c$ and $w=W(I)$,
where $c$ is an arbitrary constant, $\varepsilon=\pm1$, $\Theta=\Theta(u)$ is an arbitrary function of~$u$
and $W$ is an arbitrary function of $I=e^{\varepsilon u}t+\varepsilon\Theta_u-\Theta$.

If $u=\const$, then also $v=\const$,
and the system~$\mathcal S$ reduces to the equation~\eqref{eq:IDFMEquation3}
whose general solution in this case takes the form $w=W(x-ut)$
with $W$ being an arbitrary function of its argument~$x-ut$.
We refer to this case as ultra-singular.

We collect all the three cases for solutions of the system~$\mathcal S$ into a single assertion.

\begin{theorem}\label{thm:IDFMCompleteSolution}
Any solution of the system~$\mathcal S$ (locally) belongs to one of the following families;
below $W$~is an arbitrary function of its argument.

\medskip\par\noindent
1. The regular family, where $u_tv_x-u_xv_t\ne0$ (the general solution):
\begin{gather}\label{eq:IDFMGenSolution}
t=\Phi_u,\quad
x=u\Phi_u-\Phi_v-\Phi,\quad
w=W(e^v\Phi_v).
\end{gather}
Here the function $\Phi=\Phi(u,v)$ runs through the solutions of the telegraph equation $\Phi_{vv}+\Phi_v=\Phi_{uu}$
with $\Phi_{uu}\ne\Phi_{uv}$.

\medskip\par\noindent
2. The singular family, where $u_tv_x-u_xv_t=0$ but $u$ and~$v$ are not constants:
\[
x-(u+\varepsilon)t=e^{-\varepsilon u}\Theta_u,\quad
v=\varepsilon u+c,\quad
w=W(e^{\varepsilon u}t+\varepsilon\Theta_u-\Theta).
\]
Here $c$ is an arbitrary constant, $\varepsilon=\pm1$,
and $\Theta=\Theta(u)$ is an arbitrary function of~$u$.

\medskip\par\noindent
3. The ultra-singular family, where $u$ and~$v$ are arbitrary constants and $w=W(x-ut)$.
\end{theorem}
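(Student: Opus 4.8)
The plan is to split the analysis according to the value of the Jacobian $J:=u_tv_x-u_xv_t$, which is exactly the quantity governing applicability of the hodograph transformation, and to exploit throughout the partial coupling of $\mathcal S$: since the subsystem $\mathcal S_0$ governing $(u,v)$ does not involve $w$, I would first describe all admissible $(u,v)$ and only afterwards integrate the resulting linear first-order equation~\eqref{eq:IDFMEquation3} for~$w$. The three families of the statement correspond respectively to $J\ne0$, to $J=0$ with $(u,v)$ nonconstant, and to $(u,v)$ constant, so the first step is to check that these three possibilities are exhaustive and mutually exclusive, noting in particular that on solutions of~$\mathcal S_0$ one has $J=u_x^2-v_x^2$.

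For the regular family I would assume $J\ne0$ and perform the two-dimensional hodograph transformation of Section~\ref{sec:IDFMEssentialSubsystem}, under which $\mathcal S_0$ becomes the linear system~\eqref{eq:IDFMPotentialTelegraphEquation}, a potential system for the telegraph equation~\eqref{eq:IDFMTelegraphEquation}. The idea is then to integrate it by successive potentials. Cross-differentiating~\eqref{eq:IDFMPotentialTelegraphEquation} shows that $p$ solves~\eqref{eq:IDFMTelegraphEquation}, which is precisely the integrability condition guaranteeing a pseudopotential $\Upsilon$ with $p_y=\Upsilon_z+\Upsilon$, $p_z=\Upsilon_y$; since $p_z=\Upsilon_y$ is in conserved form, a second-level potential $\Phi$ exists with $\Phi_y=p$, $\Phi_z=\Upsilon$, and a short computation shows that $\Phi$ again satisfies the telegraph equation. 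Substituting $p=\Phi_y$ into~\eqref{eq:IDFMPotentialTelegraphEquation} forces $q=y\Phi_y-\Phi_z-\Phi$ up to the additive constant absorbed into~$\Phi$. Translating back through $y=u$, $z=v$, $p=t$, $q=x$ yields the parametric forms $t=\Phi_u$, $x=u\Phi_u-\Phi_v-\Phi$. For~$w$ I would rewrite~\eqref{eq:IDFMEquation3} in hodograph variables, reduce it by means of~\eqref{eq:IDFMPotentialTelegraphEquation} to $p_yw_y-p_zw_z=0$, multiply by $e^z$, and invoke the equation for~$\Phi$ to obtain $(e^z\Phi_z)_zw_y-(e^z\Phi_z)_yw_z=0$, whence $w$ is an arbitrary function $W$ of $e^z\Phi_z=e^v\Phi_v$.

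For the singular family I would observe that $J=0$ together with~$\mathcal S_0$ forces $v_x=\varepsilon u_x$ and $v_t=\varepsilon u_t$ with $\varepsilon=\pm1$, so $v=\varepsilon u+c$ and $\mathcal S_0$ collapses to the scalar conservation law $u_t+(u+\varepsilon)u_x=0$. Assuming $u_x\ne0$, a one-dimensional hodograph transformation with independent variables $s=t$, $y=u$ integrates this equation to $q=(y+\varepsilon)s+\varphi(y)$, and the transported equation $q_yw_s=\varepsilon w_y$ has an associated characteristic ODE supplying the first integral $I=e^{\varepsilon u}t+\varepsilon\Theta_u-\Theta$ after the convenient reparametrization $\varphi=e^{-\varepsilon y}\Theta_y$; hence $w=W(I)$. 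The remaining ultra-singular case $u\equiv\const$ forces $v\equiv\const$, reduces $\mathcal S$ to $w_t+uw_x=0$ with constant~$u$, and is solved at once by $w=W(x-ut)$.

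The step I expect to be the main obstacle is establishing \emph{completeness} of the regular family rather than merely exhibiting it: one must verify that the two potentials $\Upsilon$ and $\Phi$ can always be introduced (that is, that the relevant one-forms are closed on solutions of~\eqref{eq:IDFMPotentialTelegraphEquation}) and that the resulting parametrization is locally invertible, so that \emph{every} solution with $J\ne0$ arises in this form. This is where the nondegeneracy condition must be tracked carefully: I would compute $\Delta=p_yq_z-p_zq_y=(\Phi_{yy})^2-(\Phi_{yz})^2$ and then check that, on the solution set of the telegraph equation for~$\Phi$, the requirement $\Delta\ne0$ is equivalent to the single inequality $\Phi_{uu}\ne\Phi_{uv}$, the equivalence of the two sign choices $\Phi_{uu}\ne\pm\Phi_{uv}$ being exactly the content of the overdetermined-system argument recorded in the footnote.
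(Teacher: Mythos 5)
Your proposal is correct and follows essentially the same route as the paper's own proof: the same case split by the Jacobian $u_tv_x-u_xv_t$ (using $J=u_x^{\,2}-v_x^{\,2}$ on solutions of $\mathcal S_0$), the same two-dimensional hodograph transformation followed by the double-potential construction ($\Upsilon$, then $\Phi$ with $\Phi_y=p$, $\Phi_z=\Upsilon$) forcing $q=y\Phi_y-\Phi_z-\Phi$ and $w=W(e^v\Phi_v)$ in the regular case, the same one-dimensional hodograph transformation with the reparametrization $\varphi=e^{-\varepsilon y}\Theta_y$ in the singular case, and the same reduction of the nondegeneracy condition to $\Phi_{uu}\ne\Phi_{uv}$ via the overdetermined-system argument of the footnote. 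Nothing substantive differs from the paper's argument.
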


In other words, the regular, singular and ultra-singular families of solutions of the system~$\mathcal S$
are associated with solutions of the subsystem~$\mathcal S_0$ with rank 2, 1 and 0, respectively;
cf.\ \cite{GrundlandHuard2006}.

\section{Solution using generalized hodograph transformation}\label{sec:IDFMGeneralizedHodographMethod}

Another approach to finding the general solution of the system~\eqref{eq:IDFModel}
is to employ the generalized hodograph method (see~\cite{Tsarev1985} and Section~\ref{sec:IDFMIntroduction})
for the diagonalized form~\eqref{eq:IDFMDiagonalizedSystem} of \eqref{eq:IDFModel}.
In this way, one employs the semi-Hamiltonian property related to the diagonalized form~\eqref{eq:IDFMDiagonalizedSystem}
instead of the partial coupling,
which emerges in both the simplified forms~\eqref{eq:IDFMModelForGroupAnalysis} and~\eqref{eq:IDFMDiagonalizedSystem}.
For the system~\eqref{eq:IDFMDiagonalizedSystem},
the tuple of the parameter functions~$W=(W^1,W^2,W^3)$ in the ansatz~\eqref{eq:GeneralizedHodographAnsatz} runs through
the solution set of the overdetermined linear systems of first-order partial differential equations
\begin{subequations}\label{eq:IDFMGeneralizedHodographSystem}
\begin{gather}
W^1_2=W^2_1=\frac12(W^1-W^2),\quad W^1_3=W^2_3=0,\label{eq:IDFMGeneralizedHodographSystem1} \\
W^3_1=W^1-W^3, \quad W^3_2=W^3-W^2. \label{eq:IDFMGeneralizedHodographSystem2}
\end{gather}
\end{subequations}
(Recall that a subscript of a function in $\{1,2,3\}$ denotes
the derivative with respect to the corresponding Riemann invariant.)
Using the equation $W^1_2=W^2_1$, which is in conserved form, and the equations $W^1_3=W^2_3=0$,
we introduce the potential~$\Lambda=\Lambda(\ri^1, \ri^2)$ defined by $\Lambda_1=W^1$ and $\Lambda_2=W^2$
and reduce the subsystem~\eqref{eq:IDFMGeneralizedHodographSystem1} to the single equation
\begin{gather}\label{eq:IDFMHyperbolicKleinGordon}
2\Lambda_{12}=\Lambda_1-\Lambda_2.
\end{gather}
Then we consider the subsystem~\eqref{eq:IDFMGeneralizedHodographSystem2}
as an overdetermined inhomogeneous linear system of first-order partial differential equations
with respect to the function~$W^3$.
The general solution of this system is represented in the form
\[
W^3=F(\ri^3)e^{\ri^2-\ri^1}+\Phi(\ri^1,\ri^2),
\]
where $F$ is an arbitrary smooth function of~$\ri^3$
and $\Phi=\Phi(\ri^1,\ri^2)$ is a particular solution of the subsystem~\eqref{eq:IDFMGeneralizedHodographSystem2},
i.e.,
\begin{gather}\label{eq:IDFMGeneralizedHodographFinalReducedSystem}
\Phi_1+\Phi=\Lambda_1, \quad \Phi_2-\Phi=-\Lambda_2.
\end{gather}
Therefore, the function~$\Phi$ satisfies the same equation as the function~$\Lambda$, $2\Phi_{12}=\Phi_1-\Phi_2$.
It is convenient to assume for~$\Phi$ to be the principal parameter function and to express~$\Lambda$ in terms of~$\Phi$.
Then the determinant $\det(V^i_jt+W^i_j)$, where the indices~$i$ and~$j$ run from~1 to~3, is equal~to
\[
(\Phi_{11}-\Phi_{22})t+2(\Phi_{11}+\Phi_{12})(2\Phi_{12}+\Phi_2)-2(\Phi_{22}+\Phi_{12})(2\Phi_{11}+\Phi_1).
\]
It vanishes on solutions of equation for~$\Phi$ if and only if $\Phi_{11}=\Phi_{22}$ and $\Phi_{11}+\Phi_{12}=0$,
which also gives $\Phi_{22}+\Phi_{12}=0$.
Nevertheless, the equation $\Phi_{11}=\Phi_{22}$ is a differential consequence of
the overdetermined system $2\Phi_{12}=\Phi_1-\Phi_2$, $\Phi_{11}+\Phi_{12}=0$.
The same result holds true if we replace the second equation of this system by $\Phi_{22}+\Phi_{12}=0$.
This is why the nondegeneracy condition $\det(V^i_jt+W^i_j)\ne0$
is equivalent to the single inequality $\Phi_{11}+\Phi_{12}\ne0$ (resp.\ $\Phi_{22}+\Phi_{12}\ne0$).

As a result, we construct the following implicit representation for the general solution of the system~\eqref{eq:IDFMDiagonalizedSystem}:
\begin{gather}\label{eq:IDFMSolutionbyGeneralizedHodograph}
\begin{split}
&x-(\ri^1+\ri^2+1)t=\Phi+\Phi_1,\\
&x-(\ri^1+\ri^2-1)t=\Phi-\Phi_2,\\
&x-(\ri^1+\ri^2)t=\Phi+Fe^{\ri^2-\ri^1},
\end{split}
\end{gather}
where $F$ is an arbitrary smooth function of~$\ri^3$ with $F_{\ri^3}\ne0$
and the function $\Phi=\Phi(\ri^1,\ri^2)$ runs through the set of solutions of the equation $2\Phi_{12}=\Phi_1-\Phi_2$
with $\Phi_{11}\ne-\Phi_{12}$ or, equivalently, $\Phi_{22}\ne-\Phi_{12}$.
The equation for~$\Phi$ is reduced by the transformation
$\tilde\Phi=e^{(\ri^1-\ri^2)/2}\Phi$
to the Klein--Gordon equation $\tilde\Phi_{12}=-\tilde\Phi/4$.

The representation~\eqref{eq:IDFMSolutionbyGeneralizedHodograph} can be derived from
the particular case of the representation~\eqref{eq:IDFMGenSolution},
where the parameter function~$W$ assumed to be nonconstant.
It is just necessary to replace dependent variables in~\eqref{eq:IDFMGenSolution}
by their expressions in terms of the Riemann invariants
and re-denote $\Psi$ and the inverse of~$W$ by~$-\Phi$ and~$-F$, respectively.

Families of solutions from Theorem~\ref{thm:IDFMCompleteSolution} are nicely characterized in terms of conditions for Riemann invariants:
none of (resp.\ exactly one of, resp.\ both) $\ri^1$ and~$\ri^2$ are constants for the regular (resp.\ singular, resp.\ ultra-singular) family.
To complete the set of solutions of the form~\eqref{eq:IDFMSolutionbyGeneralizedHodograph}
to the entire solution set of the system~\eqref{eq:IDFMDiagonalizedSystem},
we should add solutions, where $(\ri^1,\ri^2)$ are defined by the first two equations of~\eqref{eq:IDFMSolutionbyGeneralizedHodograph}
and $\ri^3$ is a constant,
and solutions obtained from the singular and ultra-singular solutions of Theorem~\ref{thm:IDFMCompleteSolution}
by the transition to Riemann invariants.
The corresponding representations in terms of Riemann invariants are obvious.

\section{First-order generalized symmetries and beyond}\label{sec:IDFM1stOrderGeneralizedSymmetries}

To study first-order generalized symmetries%
\footnote{For short, we say that a generalized symmetry is of the first order
if the order of its characteristic as a differential function is not greater than one.
In particular, we consider Lie symmetries as first-order generalized symmetries.
}
of the system~\eqref{eq:IDFModel},
it is more convenient to take advantage of its diagonalized  form~\eqref{eq:IDFMDiagonalizedSystem}
in terms of the Riemann invariants $\ri=(\ri^1,\ri^2,\ri^3)$.
From now on the repeated indices~$i$ and~$j$ mean summation over~$i,j\in\{1,2,3\}$,
while the index~$k$ is fixed and takes the values~$1,2$ or~$3$.
Up to equivalence of generalized symmetries, it suffices to consider only
evolutionary generalized vector fields~\cite{Vinogradov1999,Olver1993} with characteristics of the form
$\eta=(\eta^1(t,x,\ri,\ri_x),\eta^2(t,x,\ri,\ri_x),\eta^3(t,x,\ri,\ri_x))$,
where~$\ri_x$ denotes the first-order $x$-derivative of~$\ri$.

The first-order evolutionary vector field~$Q$ is a generalized symmetry of the system~\eqref{eq:IDFMDiagonalizedSystem}
if it satisfies the condition
\[Q^{(1)}(\ri^k_t+V^k\ri^k_x)|_\eqref{eq:IDFMDiagonalizedSystem}=0,\]
cf.~\eqref{eq:IDFMInfinitesimalInvCriterion} and \cite[Definition~5.2]{Olver1993},
which implies the system $R^k=0$ for the characteristic~$\eta$, where
\begin{gather*}
R^k:=\eta^k_t-\eta^k_{\ri^j}V^j\ri^j_x-\eta^k_{\ri^j_x}\left((\ri^1_x+\ri^2_x)\ri^j_x+V^j\ri^j_{xx}\right)
+(\eta^1+\eta^2)\ri^k_x+V^k(\eta^k_x+\eta^k_{\ri^j}\ri^j_x+\eta^k_{\ri^j_x}\ri^j_{xx}).
\end{gather*}
Splitting the equation~$R^k=0$ with respect to~$\ri^j_{xx}$ readily gives $\eta^k_{\ri^j_x}=0$ for~$j\ne k$, and thus
\[
R^k:=\eta^k_t+V^k\eta^k_x+(V^k-V^j)\eta^k_{\ri^j}\ri^j_x-\eta^k_{\ri^k_x}(\ri^1_x+\ri^2_x)\ri^k_x
+(\eta^1+\eta^2)\ri^k_x.
\]

Then the equations $R^3_{\ri^1_x\ri^1_x}=0$ and $R^3_{\ri^2_x\ri^2_x}=0$
are equivalent to the equations $\eta^1_{\ri^1_x\ri^1_x}=0$ and $\eta^2_{\ri^2_x\ri^2_x}=0$, respectively,
and hence the characteristic components $\eta^1$ and $\eta^2$ can be represented in the form$\mathstrut$
\begin{gather*}
\eta^k=\theta^k(t,x,\ri)\ri^k_x+\zeta^k(t,x,\ri),\quad k=1,2.
\end{gather*}
Using this representation, we further split the entire system $R^k=0$ with respect to $(\ri^1_x,\ri^2_x)$
and, additionally, the equations $R^1=0$ and $R^2=0$ with respect to~$\ri^3_x$.
As a result, we obtain the system
\begin{gather}\label{eq:IDFMqstOrderSymsReducedSystemOfDetEqs}
\begin{split}&
2\theta^1_{\ri^2}=2\theta^2_{\ri^1}=\theta^1-\theta^2,\quad
\zeta^1_{\ri^2}=\zeta^2_{\ri^1}=0,
\\&
\theta^k_{\ri^3}=\zeta^k_{\ri^3}=0,\quad
H^k:=\theta^k_t+V^k\theta^k_x+\zeta^1+\zeta^2=0, \quad
\zeta^k_t+V^k\zeta^k_x=0,\quad k=1,2,
\\&
\ri^3_x\eta^3_{\ri^3_x}+\eta^3_{\ri^1}=\theta^1\ri^3_x,\quad
\ri^3_x\eta^3_{\ri^3_x}-\eta^3_{\ri^2}=\theta^2\ri^3_x,\quad
\eta^3_t+V^3\eta^3_x+(\zeta^1+\zeta^2)\ri^3_x=0.
\end{split}
\end{gather}
In view of $\zeta^1_{\ri^2}=\zeta^2_{\ri^1}=0$,
the equation $\zeta^1_t+V^k\zeta^1_x=0$ (resp.\ $\zeta^2_t+V^k\zeta^2_x=0$)
splits with respect to~$\ri^2$ (resp.~$\ri^1$)
into the equations $\zeta^1_t=0$, $\zeta^1_x=0$ (resp.\ $\zeta^2_t=0$, $\zeta^2_x=0$).
We denote
$K^1:=\ri^3_x\p_{\ri^3_x}+\p_{\ri^1}$,
$K^2:=\ri^3_x\p_{\ri^3_x}-\p_{\ri^2}$,
$B:=\p_t+V^3\p_x$
and derive the following differential consequences of the system~\eqref{eq:IDFMqstOrderSymsReducedSystemOfDetEqs}:
\begin{gather*}
\p_{\ri^2}(\theta^1_t+V^1\theta^1_x+\zeta^1+\zeta^2)=\theta^1_x-\theta^2_x+\zeta^2_{\ri^2}=0,\quad
(K^1B-BK^1)\eta^3=\eta^3_x=(\theta^1_x-\zeta^1_{\ri^1})\ri^3_x,\\
\p_{\ri^1}(\theta^2_t+V^2\theta^2_x+\zeta^1+\zeta^2)=\theta^2_x-\theta^1_x+\zeta^1_{\ri^1}=0,\quad
(BK^2-K^2B)\eta^3=\eta^3_x=(\theta^2_x-\zeta^2_{\ri^2})\ri^3_x.
\end{gather*}
Therefore, $\theta^1_x-\theta^2_x=\zeta^1_{\ri^1}=-\zeta^2_{\ri^2}=\zeta^1_{\ri^1}-\zeta^2_{\ri^2}$,
i.e., $\zeta^1_{\ri^1}=\zeta^2_{\ri^2}=0$, $\theta^1_x=\theta^2_x$, and thus $\eta^3_x=\theta^1_x\ri^3_x$.
We can conclude that $\zeta^1$ and~$\zeta^2$ are constants.
Differentiating the equation $2\theta^1_{\ri^2}=2\theta^2_{\ri^1}=\theta^1-\theta^2$
with respect to~$x$ gives $2\theta^1_{x\ri^2}=2\theta^2_{x\ri^1}=\theta^1_x-\theta^2_x=0$,
and hence $\theta^1_{x\ri^1}=\theta^2_{x\ri^1}=0$ and $\theta^2_{x\ri^2}=\theta^1_{x\ri^2}=0$.
The differential consequence $\p_x(H^1-H^2)=0$ of~\eqref{eq:IDFMqstOrderSymsReducedSystemOfDetEqs}
is equivalent to $\theta^1_{xx}=\theta^2_{xx}=0$.
Then the equations $\p_xH^1=0$ and~$\p_xH^2=0$ reduce to $\theta^1_{tx}=\theta^2_{tx}=0$.
As a result, we have
\begin{gather*}
\theta^k_x=\gamma=\const,\quad
\theta^k_t=-\gamma V^k-(\zeta^1+\zeta^2),\quad
2\theta^1_{\ri^2}=2\theta^2_{\ri^1}=\theta^1-\theta^2,
\\
\eta^3_x=\gamma\ri^3_x,\quad
\eta^3_t=-\gamma V^3\ri^3_x-(\zeta^1+\zeta^2)\ri^3_x,\quad
\ri^3_x\eta^3_{\ri^3_x}+\eta^3_{\ri^1}=\theta^1\ri^3_x,\quad
\ri^3_x\eta^3_{\ri^3_x}-\eta^3_{\ri^2}=\theta^2\ri^3_x.
\end{gather*}
Integrating this system, we derive explicit expressions for~$\theta^1$, $\theta^2$ and~$\eta^3$
and, therefore, for the entire characteristic~$\eta$ of~$Q$,
\noprint{
\begin{gather*}
\theta^1=\gamma x-\big(\gamma V^1+(\zeta^1+\zeta^2)\big)t+\psi,\\
\theta^2=\gamma x-\big(\gamma V^2+(\zeta^1+\zeta^2)\big)t+\psi-2\psi_{\ri^2},
\end{gather*}
where $\psi(\ri^1,\ri^2)$ runs through the set of solutions of the equation $2\psi_{\ri^1\ri^2}=\psi_{\ri^1}-\psi_{\ri^2}$,
and $\gamma$ is a constant.
}
\begin{gather*}
\eta^1=\big(\gamma x-\gamma tV^1-(\zeta^1+\zeta^2)t+\Phi+\Phi_{\ri^1}\big)\ri^1_x+\zeta^1,\\
\eta^2=\big(\gamma x-\gamma tV^2-(\zeta^1+\zeta^2)t+\Phi-\Phi_{\ri^2}\big)\ri^2_x+\zeta^2,\\
\eta^3=\big(\gamma x-\gamma tV^3-(\zeta^1+\zeta^2)t+\Phi\big)\ri^3_x+\Omega,
\end{gather*}
where $\gamma$, $\zeta^1$ and $\zeta^2$ are arbitrary constants,
the function $\Omega=\Omega\big(\ri^3,e^{\ri^2-\ri^1}\ri^3_x\big)$ runs through the set of smooth functions of $\big(\ri^3,e^{\ri^2-\ri^1}\ri^3_x\big)$,
the function $\Phi=\Phi(\ri^1,\ri^2)$ runs through the solution set of the equation $2\Phi_{\ri^1\ri^2}=\Phi_{\ri^1}-\Phi_{\ri^2}$.
This proves the following theorem.

\begin{theorem}\label{thm:IDFMFirstOrderSymmetries}
The algebra~$\Sigma^1$
of first-order reduced generalized symmetries of the system~\eqref{eq:IDFMDiagonalizedSystem} is
spanned by evolutionary vector fields of the form
\begin{gather*}
\check{\mathcal D}=\big(x-(\ri^1+\ri^2+1)t\big)\ri^1_x\p_{\ri^1}+\big(x-(\ri^1+\ri^2-1)t\big)\ri^2_x\p_{\ri^2}+\big(x-(\ri^1+\ri^2)t\big)\ri^3_x\p_{\ri^3},\\
\check{\mathcal G_1}=(t \ri^1_x-1)\p_{\ri^1}+t\ri^2_x\p_{\ri^2}+t\ri^3_x\p_{\ri^3},\quad
\check{\mathcal G_2}=\p_{\ri^1}-\p_{\ri^2},\\
\check{\mathcal P}(\Phi)=(\Phi+\Phi_{\ri^1})\ri^1_x\p_{\ri^1}+(\Phi-\Phi_{\ri^2})\ri^2_x\p_{\ri^2}+\Phi \ri^3_x\p_{\ri^3},\quad
\check{\mathcal W}(\Omega)=\Omega\p_{\ri^3},
\end{gather*}
where the parameter function $\Omega=\Omega\big(\omega^0,\omega^1\big)$
runs through the set of smooth functions of $\omega^0:=\ri^3$ and $\omega^1:=e^{\ri^2-\ri^1}\ri^3_x$,
and the parameter function $\Phi=\Phi(\ri^1,\ri^2)$ runs through
the set of solutions of the equation $2\Phi_{\ri^1\ri^2}=\Phi_{\ri^1}-\Phi_{\ri^2}$,
which is reduced by the substitution $\tilde\Phi=e^{(\ri^1-\ri^2)/2}\Phi$
to the Klein--Gordon equation $\tilde\Phi_{\ri^1\ri^2}=-\tilde\Phi/4$.
\end{theorem}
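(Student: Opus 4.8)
The plan is to solve the determining equations for first-order generalized symmetries of~\eqref{eq:IDFMDiagonalizedSystem} directly, in the evolutionary formalism, and then to organize the resulting general characteristic into a spanning set. Starting from a characteristic $\eta=(\eta^1,\eta^2,\eta^3)$ whose components are differential functions of order at most one, I would impose the invariance criterion $Q^{(1)}(\ri^k_t+V^k\ri^k_x)|_{\eqref{eq:IDFMDiagonalizedSystem}}=0$ and substitute $\ri^k_t=-V^k\ri^k_x$ on the solution manifold, obtaining the equations $R^k=0$. Since each $R^k$ is polynomial in the highest-order jet variables, I would split with respect to them: the split in $\ri^j_{xx}$ gives $\eta^k_{\ri^j_x}=0$ for $j\ne k$, while the second-derivative conditions $R^3_{\ri^1_x\ri^1_x}=0$ and $R^3_{\ri^2_x\ri^2_x}=0$ force $\eta^1,\eta^2$ to be affine in $\ri^1_x,\ri^2_x$, whence $\eta^k=\theta^k(t,x,\ri)\ri^k_x+\zeta^k(t,x,\ri)$ for $k=1,2$. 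Substituting this representation and splitting the whole system with respect to $(\ri^1_x,\ri^2_x,\ri^3_x)$ yields the reduced determining system~\eqref{eq:IDFMqstOrderSymsReducedSystemOfDetEqs}.

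The heart of the argument is to exhaust the consequences of~\eqref{eq:IDFMqstOrderSymsReducedSystemOfDetEqs}. The key device is the pair of operators $K^1=\ri^3_x\p_{\ri^3_x}+\p_{\ri^1}$, $K^2=\ri^3_x\p_{\ri^3_x}-\p_{\ri^2}$ together with $B=\p_t+V^3\p_x$: evaluating their commutators on $\eta^3$ gives $(K^1B-BK^1)\eta^3=(BK^2-K^2B)\eta^3=\eta^3_x$, and matching these against the source terms $\theta^k\ri^3_x$ forces $\zeta^1_{\ri^1}=\zeta^2_{\ri^2}=0$ and $\theta^1_x=\theta^2_x$. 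Combined with $\zeta^1_{\ri^2}=\zeta^2_{\ri^1}=0$, this shows that $\zeta^1,\zeta^2$ are constants. Differentiating the equations $H^k=0$ in $x$ then collapses the $(t,x)$-dependence of $\theta^1,\theta^2$ to the affine form $\theta^k_x=\gamma=\const$, $\theta^k_t=-\gamma V^k-(\zeta^1+\zeta^2)$, leaving only the homogeneous relation $2\theta^1_{\ri^2}=2\theta^2_{\ri^1}=\theta^1-\theta^2$ to govern the $\ri$-dependence. Introducing a potential $\Phi(\ri^1,\ri^2)$ so that the $\ri$-parts of $\theta^1,\theta^2$ are $\Phi+\Phi_{\ri^1}$ and $\Phi-\Phi_{\ri^2}$ reduces this relation to the single equation $2\Phi_{\ri^1\ri^2}=\Phi_{\ri^1}-\Phi_{\ri^2}$.

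It remains to integrate the first-order system for $\eta^3$, namely $\eta^3_x=\gamma\ri^3_x$, $\eta^3_t=-(\gamma V^3+\zeta^1+\zeta^2)\ri^3_x$, $K^1\eta^3=\theta^1\ri^3_x$ and $K^2\eta^3=\theta^2\ri^3_x$. A particular solution is assembled from the already-determined constants and $\Phi$; the homogeneous part is annihilated by $B$, $K^1$ and $K^2$, whose common invariants are $\omega^0=\ri^3$ and $\omega^1=e^{\ri^2-\ri^1}\ri^3_x$, so it is an arbitrary smooth function $\Omega(\omega^0,\omega^1)$. Collecting the explicit forms of $(\eta^1,\eta^2,\eta^3)$ and reading off the coefficients of the independent constants $\gamma,\zeta^1,\zeta^2$ and of the arbitrary functions $\Phi,\Omega$ identifies them with $\check{\mathcal D},\check{\mathcal G}_1,\check{\mathcal G}_2,\check{\mathcal P}(\Phi),\check{\mathcal W}(\Omega)$; I would finish by noting linear independence of these families and closedness under the evolutionary Lie bracket, so that they indeed span the algebra $\Sigma^1$.

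The main obstacle I expect is the disciplined bookkeeping in extracting and recombining the differential consequences of~\eqref{eq:IDFMqstOrderSymsReducedSystemOfDetEqs}, particularly for $\eta^3$. Because $K^1$, $K^2$ and $B$ do not commute and the equations for $\eta^3$ carry the sources $\theta^k\ri^3_x$ that couple back to the already-solved functions $\theta^1,\theta^2$, one must track the commutators carefully to avoid spurious or missing constraints; identifying $e^{\ri^2-\ri^1}\ri^3_x$ as the correct second argument of the free function $\Omega$ is the delicate endpoint of this computation.
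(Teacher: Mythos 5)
Your proposal is correct and takes essentially the same route as the paper's own proof: the same splitting of the invariance criterion into the reduced determining system, the same key device of the non-commuting operators $K^1$, $K^2$, $B$ evaluated on $\eta^3$ and matched against the sources $\theta^k\ri^3_x$, the same collapse of $\theta^k$ to affine $(t,x)$-dependence, the same potentialization of the relation $2\theta^1_{\ri^2}=2\theta^2_{\ri^1}=\theta^1-\theta^2$ by $\Phi$, and the same identification of $\omega^0=\ri^3$, $\omega^1=e^{\ri^2-\ri^1}\ri^3_x$ as the joint invariants parameterizing the homogeneous part. The only compression worth noting is that concluding $\zeta^1,\zeta^2$ are constants needs one more ingredient you already have in hand, namely splitting the equations $\zeta^k_t+V^k\zeta^k_x=0$ with respect to the Riemann invariants entering $V^k$ to kill the $(t,x)$-dependence.
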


\begin{remark}\label{rem:IDFMAlgebraOf1stOrderSyms}
The space of first-order reduced generalized symmetries of the system~\eqref{eq:IDFMDiagonalizedSystem} is
closed with respect to the Lie bracket of generalized vector fields,
and hence we can call it an algebra.
This property is shared by all strictly hyperbolic diagonalizable hydrodynamic-type systems. 
Up to skew-symmetry of Lie bracket, nonzero commutation relations among the above vector fields
are exhausted by the following ones:
\begin{gather*}
[\check{\mathcal D},\check{\mathcal P}(\Phi)]=\check{\mathcal P}(\Phi),\quad
[\check{\mathcal G_1},\check{\mathcal P}(\Phi)]=\check{\mathcal P}(-\Phi_{\ri^1}),\quad
[\check{\mathcal G_2},\check{\mathcal P}(\Phi)]=\check{\mathcal P}(\Phi_{\ri^1}-\Phi_{\ri^2}),\quad
\\
[\check{\mathcal D},\check{\mathcal W}(\Omega)]=\check{\mathcal W}(\omega^1\Omega_{\omega^1}),\quad
[\check{\mathcal G_1},\check{\mathcal W}(\Omega)]=\check{\mathcal W}(\omega^1\Omega_{\omega^1}),\quad
[\check{\mathcal G_2},\check{\mathcal W}(\Omega)]=-2\check{\mathcal W}(\omega^1\Omega_{\omega^1}),
\\
[\check{\mathcal W}(\Omega^1),\check{\mathcal W}(\Omega^2)]=\check{\mathcal W}\big(
\Omega^1_{\omega^0}(\omega^1\Omega^2_{\omega^1}-\Omega^2)-
\Omega^2_{\omega^0}(\omega^1\Omega^1_{\omega^1}-\Omega^1)
\big).
\end{gather*}
Therefore, the subspaces~$\mathcal I^1$ and~$\mathcal I^2$
that consist of all generalized vector fields of the forms
$\check{\mathcal P}(\Phi)$ and $\check{\mathcal W}(\Omega)$
from the algebra~$\Sigma^1$, respectively,
are (infinite-dimensional) ideals of~$\Sigma^1$.
Moreover, the ideal~$\mathcal I^1$ is commutative.
Since $\check{\mathcal P}(e^{\ri^2-\ri^1})=\check{\mathcal W}(\omega^1)=e^{\ri^2-\ri^1}\ri^3_x\p_{\ri^3}$,
the above ideals are not disjoint,
$\mathcal I^1\cap\mathcal I^2=\langle e^{\ri^2-\ri^1}\ri^3_x\p_{\ri^3}\rangle$.
\end{remark}

\begin{remark}\label{rem:IDFMGeneralizedSymmetries}
Amongst the generalized vector fields presented in Theorem~\ref{thm:IDFMFirstOrderSymmetries},
there are evolutionary forms of Lie symmetries of the system~\eqref{eq:IDFMDiagonalizedSystem}
(cf. Remark~\ref{rem:IDFMDiagonalization}) as well as genuinely generalized symmetries.
The generalized vector fields~$\check{\mathcal D}$, $\check{\mathcal G_1}$, $\check{\mathcal G_2}$,
$\check{\mathcal P}(\ri^1{+}\ri^2)$, $\check{\mathcal P}(1)$ and $\check{\mathcal W}(\Omega)$
with $\Omega$ depending on~$\ri^3$ alone are evolutionary forms of the vector fields
$-\hat{\mathcal D}$, \smash{$-\frac12\hat{\mathcal G}-\frac12\hat{\mathcal P}^v$},
$\hat{\mathcal P}^v$, $\hat{\mathcal P}^t$, $-\hat{\mathcal P}^x$ and~$\hat{\mathcal W}(\Omega)$, respectively.
The vector fields~$\check{\mathcal W}(\Omega)$
with $\Omega$ not being a function of~$\ri^3$ only are genuine generalized symmetries.
The existence of generalized symmetries~$\check{\mathcal P}(\Phi)$ with~$\Phi\notin\mathop{\rm span}(1,\ri^1{+}\ri^2)$
can be explained by observing that the pushforward of the vector field~$\check{\mathcal P}(\Phi)$
by the projection to $(t,x,\ri^1,\ri^2)$
is the evolutionary form of the Lie symmetry vector field~$\breve{\mathcal P}(\tau^0,\xi^0)$ of the essential subsystem~$\mathcal S_0$
(re-written in terms of~$(\ri^1,\ri^2)$)
with $\tau^0=\frac12(\Phi_{\ri^1}+\Phi_{\ri^2})$ and $\xi^0=\frac12(\Phi_{\ri^1}+\Phi_{\ri^2})(\ri^1+\ri^2-1)-\Phi+\Phi_{\ri^2}$.
Therefore, Lie symmetries of the essential subsystem that are lost upon transition to the entire system
are recovered as generalized first-order symmetries of the entire system.
In contrast to the family $\{\breve{\mathcal P}(\tau^0,\xi^0)\}$, the other distinguished Lie symmetry of the essential subsystem~$\mathcal S_0$,
$\breve{\mathcal J}$, is related to $\zeta^1_{\ri^1}$, which vanishes in the proof of Theorem~\ref{thm:IDFMFirstOrderSymmetries},
and therefore this symmetry has no counterpart amongst first-order symmetries of the system~$\mathcal S$.
\end{remark}

First-order symmetries from the subspace $\{\check{\mathcal W}(\Omega)\}$
can be directly generalized to an arbitrary order.

\begin{proposition}\label{pro:IDFMDiagonalizedSystemGenSymsWithWk}
The system~\eqref{eq:IDFMDiagonalizedSystem} admits generalized symmetries of arbitrarily high order of the form
$\check{\mathcal W}(\Omega)=\Omega\p_{\ri^3}$,
where $\Omega$ runs through the set of smooth functions
of a finite, but unspecified number of $\omega^\iota=(e^{\ri^2-\ri^1}\mathrm D_x)^\iota\ri^3$, $\iota\in\mathbb N_0$,
i.e., $\Omega=\Omega(\omega^0,\dots,\omega^\kappa)$ with $\kappa\in \mathbb N_0$.
\end{proposition}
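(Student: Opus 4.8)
The plan is to check directly that the evolutionary vector field $\check{\mathcal W}(\Omega)=\Omega\p_{\ri^3}$, i.e.\ the characteristic $\eta=(0,0,\Omega)$, satisfies the generalized symmetry condition $R^k=0$ on solutions of~\eqref{eq:IDFMDiagonalizedSystem} for $k=1,2,3$, where, exactly as in Section~\ref{sec:IDFM1stOrderGeneralizedSymmetries} but now for a characteristic of arbitrary order, $R^k=\mathrm D_t\eta^k+V^k\mathrm D_x\eta^k+(\eta^1+\eta^2)\ri^k_x$ is evaluated on the solution manifold (this expression is valid for characteristics of any order, since $F^k:=\ri^k_t+V^k\ri^k_x$ is first order). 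Because $\eta^1=\eta^2=0$, the conditions $R^1=R^2=0$ hold identically, and the whole statement collapses to the single scalar equation $B\Omega=0$ on solutions, where $B:=\mathrm D_t+V^3\mathrm D_x$ is the total-derivative operator along the third characteristic.

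First I would record the action of $B$ on the Riemann invariants modulo the system. Using $\ri^k_t=-V^k\ri^k_x$ one immediately gets $B\ri^k=(V^3-V^k)\ri^k_x$, that is $B\ri^1=-\ri^1_x$, $B\ri^2=\ri^2_x$ and $B\ri^3=0$. The last equality is precisely $B\omega^0=0$ on solutions and will anchor an induction on~$\iota$.

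The key step is a commutation relation between $B$ and the operator $L:=e^{\ri^2-\ri^1}\mathrm D_x$, for which $\omega^{\iota+1}=L\omega^\iota$. Writing $h:=e^{\ri^2-\ri^1}$, applying the Leibniz rule, and using the operator identity $[B,\mathrm D_x]=-(\mathrm D_xV^3)\mathrm D_x=-(\ri^1_x+\ri^2_x)\mathrm D_x$, a short calculation gives $B(L\omega)-L(B\omega)=\bigl(Bh-h\,\mathrm D_xV^3\bigr)\mathrm D_x\omega$ for every differential function~$\omega$. Here $Bh=h\,B(\ri^2-\ri^1)$, and on solutions $B(\ri^2-\ri^1)=\ri^1_x+\ri^2_x=\mathrm D_xV^3$; hence the prefactor $Bh-h\,\mathrm D_xV^3$ vanishes on solutions, so $B$ and $L$ commute modulo the differential ideal~$\mathcal I$ generated by~\eqref{eq:IDFMDiagonalizedSystem}.

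Granting this, the proof finishes by induction: if $B\omega^\iota\in\mathcal I$, then $B\omega^{\iota+1}=B(L\omega^\iota)\equiv L(B\omega^\iota)\pmod{\mathcal I}$, and $L(B\omega^\iota)\in\mathcal I$ because $L$ preserves~$\mathcal I$ (it is multiplication by $h$ followed by $\mathrm D_x$, and both operations map~$\mathcal I$ into itself). Thus $B\omega^\iota=0$ on solutions for all $\iota\in\mathbb N_0$, and by the chain rule $B\Omega=\sum_{\iota=0}^\kappa\Omega_{\omega^\iota}\,B\omega^\iota=0$ on solutions, which is exactly the required symmetry condition; since $\omega^\kappa$ involves the $x$-derivative $\mathrm D_x^\kappa\ri^3$, the order of $\check{\mathcal W}(\Omega)$ is unbounded as $\kappa$ grows. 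The main obstacle is the bookkeeping at the level of the differential ideal: one must verify that the commutator $[B,L]$ really lands in~$\mathcal I$ (and not merely that its naive leading part cancels) and that applying $L$ cannot carry an element of~$\mathcal I$ out of~$\mathcal I$; once the commutation relation is established on solutions, the induction is routine.
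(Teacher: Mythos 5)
Your proof is correct and follows essentially the same route as the paper: the same commutation relation $[\mathrm B,\mathrm A]=(\mathrm B\ri^2-\ri^2_x-\mathrm B\ri^1-\ri^1_x)\mathrm A$ (your prefactor $Bh-h\,\mathrm D_xV^3$ is exactly this coefficient, a combination of equations \eqref{eq:IDFMDEquation1} and~\eqref{eq:IDFMDEquation2}, hence literally in the ideal), the same induction giving $\mathrm B\omega^\iota=0$ as a differential consequence, and the same chain-rule conclusion. Your write-up is in fact more explicit than the paper's terse argument about why $R^1=R^2=0$ and about the ideal bookkeeping, but the underlying idea is identical.
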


\begin{proof}
The differential operator~$\mathrm A=e^{\ri^2-\ri^1}\mathrm D_x$ commutes with the operator~$\mathrm B=\mathrm D_t+(\ri^1+\ri^2)\mathrm D_x$
on solutions of the system~\eqref{eq:IDFMDiagonalizedSystem}
since \[[\mathrm B,\mathrm A]=(\mathrm B\ri^2-\ri^2_x-\mathrm B\ri^1-\ri^1_x)\mathrm A.\]
Therefore, the operator~$\mathrm A$ maps solutions of the equation~\eqref{eq:IDFMDEquation3}
to solutions of the same equation.
In particular, the equations $\mathrm B\omega^\kappa=0$ are differential consequences of the system~\eqref{eq:IDFMDiagonalizedSystem}.
This observation as well as the fact that the equation~\eqref{eq:IDFMDEquation3}
is both linear and of first order with respect to~$\ri^3$
hints that $\check{\mathcal W}(\Omega)$ is a generalized symmetry of the system~\eqref{eq:IDFMDiagonalizedSystem}.
Indeed, it satisfies the invariance criterion.
\end{proof}

In fact, the system~\eqref{eq:IDFMDiagonalizedSystem} admits
another family of generalized symmetries of arbitrarily high order,
which essentially differs from the family $\{\check{\mathcal W}(\Omega)\}$
and is obtained by the prolongation of generalized symmetries
of the essential subsystem to~$\ri^3$.
In the next paper on the system~\eqref{eq:IDFMDiagonalizedSystem},
we shall exhaustively describe all generalized symmetries of this system.

\section{Hydrodynamic conservation laws and their generalizations}\label{sec:IDFMHydrodynamicCLs}

Here we find the complete space of zeroth-order conservation laws of the system~\eqref{eq:IDFMDiagonalizedSystem},
which is in fact exhausted by linear combinations of a single non-translation-invariant conservation law with hydrodynamic conservation laws.
Recall that a conservation law is called \textit{hydrodynamic} if its density~$\rho$ is a function of dependent variables only.
Following~\cite{Doyle1994}, we also present the complete set of first-order conservation laws
whose densities do not involve independent variables and a family of conservation laws of arbitrarily high order.

\begin{theorem}\label{thm:IDFMConservationLaws}
The space of zeroth-order conservation laws of the system~\eqref{eq:IDFMDiagonalizedSystem}
is spanned by the single non-translation-invariant conservation law with the conserved current
\[e^{\ri^1-\ri^2}\big(x-V^3t,V^3(x-V^3t)-t\big)\]
and the space of hydrodynamic conservation laws of this system,
which is naturally isomorphic to the space of conserved currents of the general form
\[
\big(e^{\ri^1-\ri^2}\Omega+\Psi_{\ri^1}-\Psi_{\ri^2},\,(\ri^1+\ri^2)e^{\ri^1-\ri^2}\Omega+V^1\Psi_{\ri^1}-V^2\Psi_{\ri^2}\big).
\]
Here~$\Omega$ is an arbitrary smooth function of~$\ri^3$
and the parameter function~$\Psi=\Psi(\ri^1,\ri^2)$ runs through the solution set of the equation
$2\Psi_{\ri^1\ri^2}=\Psi_{\ri^2}-\Psi_{\ri^1}$, which is reduced to the Klein--Gordon equation $\tilde \Psi_{\ri^1\ri^2}=-\tilde \Psi/4$
by the transformation $\tilde\Psi=e^{(\ri^2-\ri^1)/2}\Psi$.
\end{theorem}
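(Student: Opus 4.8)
The plan is to treat a zeroth-order conservation law as a pair $(\rho,\sigma)$ of functions of $(t,x,\ri)$ with $\mathrm D_t\rho+\mathrm D_x\sigma=0$ on solutions of~\eqref{eq:IDFMDiagonalizedSystem}. First I would substitute $\ri^k_t=-V^k\ri^k_x$ and split the resulting identity with respect to the parametric first-order derivatives $\ri^k_x$, exactly as in the Lie-symmetry computations of Sections~\ref{sec:IDFMLieSymmetries} and~\ref{sec:IDFMEssentialSubsystem}. This yields the determining equations $\sigma_{\ri^k}=V^k\rho_{\ri^k}$, $k=1,2,3$, together with $\rho_t+\sigma_x=0$.

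Next I would extract the compatibility conditions for the overdetermined subsystem $\sigma_{\ri^k}=V^k\rho_{\ri^k}$ by cross-differentiation. Using the explicit velocities~\eqref{eq:IDFMCharacteristicSpeeds}, the index pairs $(1,2)$, $(1,3)$, $(2,3)$ give, respectively, $2\rho_{\ri^1\ri^2}=\rho_{\ri^2}-\rho_{\ri^1}$, $\rho_{\ri^1\ri^3}=\rho_{\ri^3}$ and $\rho_{\ri^2\ri^3}=-\rho_{\ri^3}$. Integrating the last two shows that $\rho_{\ri^3}$ equals $e^{\ri^1-\ri^2}$ times a function of $(t,x,\ri^3)$, so that $\rho=e^{\ri^1-\ri^2}H(t,x,\ri^3)+P(t,x,\ri^1,\ri^2)$, where $P$ solves the telegraph-type equation $2P_{\ri^1\ri^2}=P_{\ri^2}-P_{\ri^1}$ for each fixed $(t,x)$; the flux $\sigma$ is then recovered from $\sigma_{\ri^k}=V^k\rho_{\ri^k}$ up to an additive term $c(t,x)$. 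Since the equations are linear, I would treat the $H$- and $P$-contributions separately. For the $H$-part, imposing $\rho_t+\sigma_x=0$ and splitting off the $\ri^3$-dependence forces $H_{\ri^3}$ to be independent of $(t,x)$, i.e.\ $H=\Omega(\ri^3)$ modulo an $e^{\ri^1-\ri^2}\lambda(t,x)$ summand that merges into $P$, while the leftover $c$ is seen to be a null (trivial) current.

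The heart of the argument, and the step I expect to be the main obstacle, is bounding the $(t,x)$-dependence of the telegraph part $P$. Here the plan is to differentiate the remaining equation $\rho_t+\sigma_x=0$ with respect to $\ri^1$ and $\ri^2$, which, after using $\sigma_{\ri^k}=V^k\rho_{\ri^k}$, produces the transport equations $(\p_t+V^1\p_x)P_{\ri^1}=0$ and $(\p_t+V^2\p_x)P_{\ri^2}=0$. The subtlety is that the characteristic velocities depend on $\ri^1,\ri^2$, so these operators do not commute with $\p_{\ri^1}$ and $\p_{\ri^2}$; exploiting the commutator $[\p_{\ri^2},\p_t+V^1\p_x]=\p_x$ and applying $\p_t+V^1\p_x$ to the telegraph equation, I would derive $\p_x(P_{\ri^1}+P_{\ri^2})=0$. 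Feeding this back, together with $V^1-V^2=2$, into the two transport equations yields $P_{\ri^1xx}=0$ (and likewise $P_{\ri^2xx}=0$), whence both $P_{\ri^1}$ and $P_{\ri^2}$ are at most affine in $(t,x)$. Consequently $P$ itself is affine in $(t,x)$ modulo an additive function of $(t,x)$, which is a trivial current.

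Finally I would resolve this affine structure. The $(t,x)$-independent part of $P$ is an arbitrary solution of $2P_{\ri^1\ri^2}=P_{\ri^2}-P_{\ri^1}$, i.e.\ the hydrodynamic family, which I would write in potential form $P=\Psi_{\ri^1}-\Psi_{\ri^2}$ with $\Psi$ obeying the same equation (augmented by the $e^{\ri^1-\ri^2}\Omega$ term from the $H$-part), the Klein--Gordon reduction $\tilde\Psi=e^{(\ri^2-\ri^1)/2}\Psi$ being as stated. Writing the genuinely linear-in-$(t,x)$ part as $x\phi(\ri^1,\ri^2)+t\psi(\ri^1,\ri^2)$ and imposing both the telegraph equation and the transport conditions $\psi_{\ri^k}=-V^k\phi_{\ri^k}$, I would show that the coefficients must satisfy $\phi_{\ri^1}+\phi_{\ri^2}=0$ together with $2\phi_{\ri^1\ri^2}=\phi_{\ri^2}-\phi_{\ri^1}$, which forces $\phi=Ce^{\ri^1-\ri^2}$ (an additive constant in $\phi$ yielding only a trivial current) and correspondingly $\psi=-C(\ri^1+\ri^2)e^{\ri^1-\ri^2}$. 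This singles out the one-dimensional space generated by the density $e^{\ri^1-\ri^2}\big(x-V^3t\big)$, and a direct computation recovers its flux $e^{\ri^1-\ri^2}\big(V^3(x-V^3t)-t\big)$, completing the identification of the conserved currents and hence the proof.
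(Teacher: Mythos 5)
Your proposal is correct, and it reaches the same classification through the same overall skeleton as the paper (same six determining equations, same splitting off of the $e^{\ri^1-\ri^2}\Omega(\ri^3)$ part, same identification of the affine-in-$(t,x)$ piece), but the two technically hardest steps are handled by genuinely different means. First, you obtain the determining equations from the pair $(\rho,\sigma)$ by splitting $\mathrm D_t\rho+\mathrm D_x\sigma=0$ and cross-differentiating the flux conditions $\sigma_{\ri^k}=V^k\rho_{\ri^k}$, whereas the paper eliminates the flux from the outset via the Euler-operator condition $\bar{\mathsf E}\bar{\mathrm D}_t\rho=0$; these are equivalent and yield the identical system $\rho_{\ri^1\ri^3}=\rho_{\ri^3}$, $\rho_{\ri^2\ri^3}=-\rho_{\ri^3}$, $2\rho_{\ri^1\ri^2}=\rho_{\ri^2}-\rho_{\ri^1}$, $\rho_{t\ri^j}+V^j\rho_{x\ri^j}=0$. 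Second, and more substantively, the control of the $(t,x)$-dependence of the telegraph part $P$ differs: the paper integrates the transport equations along characteristics, writing $\tilde\rho_{\ri^j}=f^j(y_j,\ri^1,\ri^2)$ with $y_j=x-V^jt$, and extracts $f^j_{y_jy_j}=0$ from the compatibility conditions before splitting in $y_1,y_2$; you instead derive the differential consequence $\p_x(P_{\ri^1}+P_{\ri^2})=0$ (which indeed follows, e.g.\ by applying $\p_t+V^1\p_x$ to the telegraph equation and using $[\p_{\ri^2},\p_t+V^1\p_x]=\p_x$, giving $-2P_{x\ri^1}=2P_{x\ri^2}$), and then get $P_{xx\ri^1}=0$ by subtracting the $x$-differentiated transport equations, since $(V^1-V^2)P_{xx\ri^1}=0$ with $V^1-V^2=2$; affineness in $(t,x)$ then follows from the transport equations themselves. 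Your route buys a derivation that never introduces the characteristic variables and stays entirely within straightforward differential consequences of the determining system, while the paper's characteristic-variable ansatz packages the $(t,x)$-structure in one stroke; both correctly isolate the one-dimensional space spanned by the density $e^{\ri^1-\ri^2}(x-V^3t)$ (your resolution of the affine part, with the additive constants absorbed into trivial currents, matches the paper's constant $C$ and null-divergence term $\rho^0$) and the hydrodynamic family written in the potential form $\Psi_{\ri^1}-\Psi_{\ri^2}$, whose local existence you assert at the same level of detail as the paper does for its functions $h^1,h^2$.
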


\begin{proof}
If~$\rho=\rho(t,x,\ri)$ is a density of a conservation law of the system~\eqref{eq:IDFMDiagonalizedSystem},
then $\bar{\mathsf E}\bar{\mathrm D}_t\rho=0$,
where \smash{$\bar{\mathrm D}_t=\p_t-\sum_{i=1}^3V^i\ri^i_x\p_{\ri^i}$}
is the restricted reduced operator of total derivative with respect to~$t$ in view of the system~\eqref{eq:IDFMDiagonalizedSystem},
$\bar{\mathsf E}=\big(\p_{\ri^i}-\mathrm D_x\p_{\ri^i_x},\,j=1,2,3\big)^{\mathsf T}$
is the restricted Euler operator.
The above condition is equivalent to the system
\[
\sum_{i=1}^3\big((V^j-V^i)\rho_{\ri^i\ri^j}+V^j_{\ri^i}\rho_{\ri^j}-V^i_{\ri^j}\rho_{\ri^i}\big)\ri^i_x
+\rho_{\ri^jt}+V^j\rho_{\ri^jx}=0,\quad j=1,2,3.
\]
Splitting this system with respect to~$\ri^i_x$ gives three equations corresponding to summands without derivatives of~$\ri$
and a system associated with coefficients of $\ri^i_x$, $i=1,2,3$,
whose left-hand side is antisymmetric with respect to the permutation of~$i$ and~$j$.
Hence the latter system contains only three independent differential equations on~$\rho$.
As a result, we derive a system of six determining equations for~$\rho$,
\begin{gather}\label{eq:IDFMDDetEqsFor0th-orderCLs}
\rho_{\ri^1\ri^3}=\rho_{\ri^3},\quad
\rho_{\ri^2\ri^3}=-\rho_{\ri^3},\quad
2\rho_{\ri^1\ri^2}=\rho_{\ri^2}-\rho_{\ri^1},\quad
\rho_{t\ri^j}+V^j\rho_{x\ri^j}=0,\quad j=1,2,3.
\end{gather}
Denoting $R^0:=\rho_{\ri^1\ri^3}-\rho_{\ri^3}$ and $R^j:=\rho_{t\ri^j}+V^j\rho_{x\ri^j}$, $j=1,2,3$,
We derive the differential consequence
$\p_{\ri^3}R^1-(\p_t+V^1\p_x)R^0-R^3=\rho_{\ri^3x}=0$,
and thus also $\rho_{\ri^3t}=0$.
The first two equations of the system~\eqref{eq:IDFMDDetEqsFor0th-orderCLs}
can be rewritten as $(e^{\ri^2-\ri^1}\rho_{\ri^3})_{\ri^1}=(e^{\ri^2-\ri^1}\rho_{\ri^3})_{\ri^2}=0$.
Therefore, the general solution of~\eqref{eq:IDFMDDetEqsFor0th-orderCLs} admits the representation
\[\rho=e^{\ri^1-\ri^2}\Omega+\tilde\rho(t,x,\ri^1,\ri^2),\]
where $\Omega=\Omega(\ri^3)$ is an arbitrary smooth function of~$\ri^3$,
and $\tilde\rho=\tilde\rho(t,x,\ri^1,\ri^2)$ is the general solution of the reduced system
\begin{gather}\label{eq:IDFMDReducedDetEqsFor0th-orderCLs}
2\tilde\rho_{\ri^1\ri^2}=\tilde\rho_{\ri^2}-\tilde\rho_{\ri^1},\quad
\tilde\rho_{t\ri^j}+V^j\tilde\rho_{x\ri^j}=0,\quad j=1,2.
\end{gather}
The last two equations of~\eqref{eq:IDFMDReducedDetEqsFor0th-orderCLs} integrate to
$\tilde\rho_{\ri^j}=f^j(y_j,\ri^1,\ri^2)$, $j=1,2$,
where $f^1$ and~$f^2$ are smooth functions of their arguments, and $y_j=x-V^jt$.
The consistency of these representations for the derivatives~$\tilde\rho_{\ri^1}$ and~$\tilde\rho_{\ri^2}$
with each other and with the first equations of~\eqref{eq:IDFMDReducedDetEqsFor0th-orderCLs}
leads to equations for~$f^1$ and~$f^2$,
\begin{gather}\label{eq:IDFMDDoubleReducedDetEqsFor0th-orderCLs}
 (y_1-y_2)f^1_{y_1}+2f^1_{\ri^2}
=(y_1-y_2)f^2_{y_2}+2f^2_{\ri^1}
=f^2-f^1.
\end{gather}
Successively differentiating these equations with respect to~$y_1$ and~$y_2$,
we get $f^1_{y_1y_1}=f^2_{y_2y_2}=0$,
i.e., $f^j=g^j(\ri^1,\ri^2)y_j+h^j(\ri^1,\ri^2)$
for some smooth functions~$g^j$ and~$h^j$ of $(\ri^1,\ri^2)$, $j=1,2$.
Then the splitting of the equations~\eqref{eq:IDFMDDoubleReducedDetEqsFor0th-orderCLs}
with respect to~$y_1$ and~$y_2$ gives a system
$g^1_{\ri^2}=-g^1$, $g^2_{\ri^1}=g^2$, $g^1=-g^2$, \smash{$2h^1_{\ri^2}=2h^2_{\ri^1}=h^2-h^1$}
for the coefficients~$g^j$ and~$h^j$.
The general solution of this system can be represented in the form
$g^1=-g^2=Ce^{\ri^1-\ri^2}$, $h^1=\Psi_{\ri^1\ri^2}-\Psi_{\ri^1\ri^1}$, $h^2=\Psi_{\ri^2\ri^2}-\Psi_{\ri^1\ri^2}$,
where $C$ is an arbitrary constant
and $\Psi=\Psi(\ri^1,\ri^2)$ is an arbitrary solution of the equation $2\Psi_{\ri^1\ri^2}=\Psi_{\ri^2}-\Psi_{\ri^1}$.
Simultaneously integrating the equations $\tilde\rho_{\ri^j}=f^j$ in view of the derived expressions for~$f_j$, $j=1,2$,
we finally obtain
\[\tilde\rho=Ce^{\ri^1-\ri^2}(x-V^3t)+\Psi_{\ri^1}-\Psi_{\ri^2}+\rho^0\]
with $\rho^0$ being an arbitrary smooth function of~$(t,x)$,
which is the density of a null divergence and should thus be neglected.
To find the flux~$\sigma$ associated with~$\rho=e^{\ri^1-\ri^2}\Omega+\tilde\rho$, we notice that
$
\mathrm D_x\sigma=-\bar{\mathrm D}_t\rho=\mathrm D_x\big((\ri^1+\ri^2)e^{\ri^1-\ri^2}\Omega+C(V^3(x-V^3t)-t)+V^1\Psi_{\ri^1}-V^2\Psi_{\ri^2}\big).
$
\end{proof}

The conservation-law characteristic associated with the above conserved current is
\[
(e^{\ri^1-\ri^2}\Omega+\Psi_{\ri^1\ri^1}-\Psi_{\ri^1\ri^2},\,
-e^{\ri^1-\ri^2}\Omega+\Psi_{\ri^1\ri^2}-\Psi_{\ri^2\ri^2},\,
e^{\ri^1-\ri^2}\Omega_{\ri^3}).
\]

In addition to the standard equivalence of conservation laws one can consider their equivalence
up to the action of a point symmetry group of the system of differential equations under study,
see~\cite{Khamitova1982,PopovychKunzingerIvanova2008} for details.
A set of conservation laws that generate a space of conservation laws
via linear combining and acting with point symmetries is called a \emph{generating set}
for this space~\cite{Khamitova1982}.

There are only two inequivalent values of the parameter function~$\Omega(\ri^3)$
up to transformations in the point symmetry group~$G$ of the system~\eqref{eq:IDFMDiagonalizedSystem}
(see Theorem~\ref{thm:IDFMSymmetryGroup}),
$\Omega=1$ and $\Omega=\ri^3$.
Moreover, it is easily seen that
the conserved current with $(\Omega,\Psi)=(1,0)$ coincides with
the conserved current with $(\Omega,\Psi)=(0,e^{\ri^1-\ri^2})$.

\begin{corollary}\label{eq:IDFMDiagonalizedSystemHydrodynamicCLs}
A generating set for the space of hydrodynamic conservation laws of the system~\eqref{eq:IDFMDiagonalizedSystem}
up to the point symmetry group~$G$ of this system
consists of the conservation laws with conserved currents
\begin{gather*}
\mathrm{DHC}=\left(e^{\ri^1-\ri^2}\ri^3,\ (\ri^1+\ri^2)e^{\ri^1-\ri^2}\ri^3\right),\\
\mathrm{EHC}(\Psi)=\big(\Psi_{\ri^1}-\Psi_{\ri^2},\ (\ri^1+\ri^2+1)\Psi_{\ri^1}-(\ri^1+\ri^2-1)\Psi_{\ri^2}\big),
\end{gather*}
where~$\Psi=\Psi(\ri^1,\ri^2)$ runs through the solution set of the equation $2\Psi_{\ri^1\ri^2}=\Psi_{\ri^2}-\Psi_{\ri^1}$,
which is reduced by the transformation $\tilde\Psi=e^{(\ri^2-\ri^1)/2}\Psi$
to the Klein--Gordon equation $\tilde\Psi_{\ri^1\ri^2}=-\tilde\Psi/4$.
\end{corollary}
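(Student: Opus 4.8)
The plan is to combine the complete classification of hydrodynamic conservation laws from Theorem~\ref{thm:IDFMConservationLaws} with the action of the point symmetry group~$G$ on conserved currents, using the latter only to collapse the arbitrariness in the parameter function~$\Omega(\ri^3)$. Writing the general hydrodynamic conserved current of Theorem~\ref{thm:IDFMConservationLaws} as $\mathrm{DHC}(\Omega)+\mathrm{EHC}(\Psi)$ with $\mathrm{DHC}(\Omega):=\big(e^{\ri^1-\ri^2}\Omega,\,(\ri^1+\ri^2)e^{\ri^1-\ri^2}\Omega\big)$ and $\mathrm{EHC}(\Psi)$ as in the statement (here I use $V^1\Psi_{\ri^1}-V^2\Psi_{\ri^2}=(\ri^1+\ri^2+1)\Psi_{\ri^1}-(\ri^1+\ri^2-1)\Psi_{\ri^2}$), one sees that the whole space of hydrodynamic conservation laws is spanned by the two families $\{\mathrm{DHC}(\Omega)\}$ and $\{\mathrm{EHC}(\Psi)\}$. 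Since the second family is retained in full, it suffices to show that every $\mathrm{DHC}(\Omega)$ is obtained from $\mathrm{DHC}=\mathrm{DHC}(\ri^3)$ and the $\mathrm{EHC}$-family by linear combining and the action of~$G$.

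First I would determine the action on $\mathrm{DHC}(\Omega)$ of the elementary symmetry $\ri^3\mapsto W(\ri^3)$ in~$G$ (cf.\ Remark~\ref{rem:IDFMDiagonalization}), which leaves $t$, $x$, $\ri^1$, $\ri^2$ and hence the factor $e^{\ri^1-\ri^2}$ unchanged. Tracking the induced pullback on conserved currents shows that this transformation maps $\mathrm{DHC}(\Omega)$ to $\mathrm{DHC}(\tilde\Omega)$, where $\tilde\Omega$ is obtained from $\Omega$ by the corresponding reparametrization of the $\ri^3$-line. As $W$ runs through all local diffeomorphisms of this line, the $G$-orbit of $\mathrm{DHC}=\mathrm{DHC}(\ri^3)$ therefore contains $\mathrm{DHC}(\tilde\Omega)$ for every $\tilde\Omega$ with $\tilde\Omega_{\ri^3}\ne0$.

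It then remains to pass to linear combinations. Locally any smooth~$\Omega$ decomposes as a sum of a function with nonvanishing derivative and a constant; the former summand yields, by the previous step, a current in the $G$-orbit of $\mathrm{DHC}$, while the constant summand is absorbed through the identity $\mathrm{DHC}(1)=\tfrac12\,\mathrm{EHC}(e^{\ri^1-\ri^2})$, which I would verify directly after checking that $e^{\ri^1-\ri^2}$ solves $2\Psi_{\ri^1\ri^2}=\Psi_{\ri^2}-\Psi_{\ri^1}$. Hence $\mathrm{DHC}(\Omega)$ lies in the span of the $G$-orbit of $\mathrm{DHC}$ together with the $\mathrm{EHC}$-family, which proves that $\{\mathrm{DHC},\mathrm{EHC}(\Psi)\}$ is a generating set. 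The main obstacle is the bookkeeping in the second paragraph: one must pin down the exact induced action of~$G$ on conserved currents (in particular any multiplier coming from the transformation of the total derivative operators) in order to be certain that $\mathrm{DHC}(\Omega)$ is sent precisely into the family $\{\mathrm{DHC}(\tilde\Omega)\}$, and one must notice that a constant value of~$\Omega$ — being outside the range of any diffeomorphism~$W$ — escapes the orbit argument and is recovered only via the coincidence $\mathrm{DHC}(1)=\tfrac12\,\mathrm{EHC}(e^{\ri^1-\ri^2})$.
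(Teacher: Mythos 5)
Your route is the same as the paper's: keep the family $\mathrm{EHC}(\Psi)$ whole, use the transformations $\tilde\ri^3=W(\ri^3)$ from $G$ to reparametrize the argument of~$\Omega$, and dispose of constant~$\Omega$ through the coincidence $\mathrm{DHC}(1)=\tfrac12\,\mathrm{EHC}\bigl(e^{\ri^1-\ri^2}\bigr)$; the paper compresses exactly this into the remark that up to~$G$ only $\Omega=1$ and $\Omega=\ri^3$ remain, plus that same coincidence. Your bookkeeping of the induced action is correct: since these transformations leave $t$ and~$x$ fixed, no Jacobian multiplier appears and the current is simply rewritten in the new variable, so $\mathrm{DHC}(\Omega)\mapsto\mathrm{DHC}(\Omega\circ W^{-1})$, whence the $G$-orbit of $\mathrm{DHC}=\mathrm{DHC}(\ri^3)$ consists of all $\mathrm{DHC}(\tilde\Omega)$ with $\tilde\Omega_{\ri^3}\ne0$; and $e^{\ri^1-\ri^2}$ indeed solves $2\Psi_{\ri^1\ri^2}=\Psi_{\ri^2}-\Psi_{\ri^1}$ with $\mathrm{EHC}\bigl(e^{\ri^1-\ri^2}\bigr)=2\,\mathrm{DHC}(1)$.

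The genuine gap is your final decomposition step: the claim that locally any smooth~$\Omega$ is a sum of a function with nonvanishing derivative and a constant is false. If $\Omega=f+c$ with $c$ constant, then $f_{\ri^3}=\Omega_{\ri^3}$, so such a decomposition exists only where $\Omega_{\ri^3}\ne0$ and fails at every critical point of~$\Omega$; for instance $\Omega=(\ri^3)^2$ near $\ri^3=0$ is a legitimate parameter value in Theorem~\ref{thm:IDFMConservationLaws}, yet it is neither of your claimed form nor in the $G$-orbit of~$\mathrm{DHC}$. The repair is cheap: on a neighborhood where $|\Omega_{\ri^3}|\le\lambda$ for some constant $\lambda>0$, write
\[
\Omega=\bigl(\Omega+\lambda\,\ri^3\bigr)+\bigl(-\lambda\,\ri^3\bigr),
\]
so that both summands have nonvanishing derivative; then $\mathrm{DHC}(\Omega+\lambda\ri^3)$ and $\mathrm{DHC}(-\lambda\ri^3)$ both lie in the $G$-orbit of~$\mathrm{DHC}$ by your own orbit argument, and $\mathrm{DHC}(\Omega)$ is their sum because $\mathrm{DHC}$ is linear in~$\Omega$. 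Note that this fixed decomposition also covers constant~$\Omega$, so the identity $\mathrm{DHC}(1)=\tfrac12\,\mathrm{EHC}\bigl(e^{\ri^1-\ri^2}\bigr)$ is then no longer strictly needed, although it remains the cleanest explanation of why no separate constant-$\Omega$ generator appears in the corollary.
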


\begin{remark}
Functions~$\Phi$ and~$\Psi$ parameterizing first-order generalized symmetries
and hydrodynamic conservation laws of the system~\eqref{eq:IDFMDiagonalizedSystem}
satisfy adjoint differential equations. This can be explained by the fact that
characteristics of conservations laws of the system~\eqref{eq:IDFMDiagonalizedSystem}
are cosymmetries of this system, cf.~\cite{Vinogradov1984}.
\end{remark}

For the system~\eqref{eq:IDFMModelForGroupAnalysis},
the space of hydrodynamic conservation laws
is naturally isomorphic to the space of conserved currents of the general form
$(e^v\Omega+\Psi_v,\,ue^v\Omega+u\Psi_v+\Psi_u)$,
where~$\Omega=\Omega(w)$ is an arbitrary smooth function of~$w$
and the parameter function~$\Psi=\Psi(u,v)$ runs through the solution set of the telegraph equation
$\Psi_{uu}=\Psi_{vv}-\Psi_v$.
The conservation-law characteristic associated with the above conserved current is
$(\Psi_{uv},e^v\Omega+\Psi_{vv},e^v\Omega_w)$.
A generating set of the space of hydrodynamic conservation laws of the system~$\mathcal S$
consists of the conservation laws with conserved currents
$(e^vw,ue^vw)$, $(\Psi_v,u\Psi_v+\Psi_u)$,
where the parameter function~$\Psi=\Psi(u,v)$ runs through the solution set of the telegraph equation
$\Psi_{uu}=\Psi_{vv}-\Psi_v$.

Many semi-Hamiltonian systems, including the system~\eqref{eq:IDFMDiagonalizedSystem},
also possess first-order conservation laws~\cite{Doyle1994,Sheftel1994b}
(see also~\cite{Serre1989,Serre1991}).
To construct such conservation laws for the system~\eqref{eq:IDFMDiagonalizedSystem} following~\cite[Theorem~5.1]{Doyle1994},
we should first find some smooth nonvanishing functions~$G^i(\ri)$, $i=1,2,3$, that satisfy the equations
\[
\frac{G^i_{\ri^j}}{G^i}=-\frac{V^i_{\ri^j}}{V^i-V^j},\quad i\neq j.
\]
We can take $G^1=e^{-\ri^2/2}$, $G^2=e^{\ri^1/2}$, $G^3=e^{\ri^1-\ri^2}$.
Every first-order conserved current of the system~\eqref{eq:IDFMDiagonalizedSystem}
with density not involving $(t,x)$
is equivalent to the sum of a hydrodynamic conserved current and a conserved current of the form
\[
\left(\sum\limits_{i=1}^2\frac{(G^i)^2f^i}{\ri^i_x}+G^3\Omega,\,
\sum\limits_{i=1}^2V^i\frac{(G^i)^2f^i}{\ri^i_x}+V^3G^3\Omega-2x\sum\limits_{i=1}^2V^i_{\ri^i}\frac{(G^i)^2f^i}{\ri^i_x}\right),
\]
where~$\Omega=\Omega(\ri^3,\ri^3_x/G^3)$ is an arbitrary smooth function of its arguments,
and $f^1=f^1(\ri^1)$ and $f^2=f^2(\ri^2)$ are arbitrary smooth functions of~$\ri^1$ and~$\ri^2$,
respectively, that satisfy the condition
\[
\sum\limits_{i=1}^2(G^i)^2f^i=\const.
\]
(The above summations are in the range $\{1,2\}$ since $V^3_{\ri^3}=0$.)
Therefore, $f^1(\ri^1)=ce^{\ri^1}$ and $f^2(\ri^2)=-ce^{-\ri^2}$,
where $c$ is an arbitrary constant.

\begin{proposition}\label{pro:IDFMFirstOrderCurrent}
The space of conserved currents of conservation laws of the system~\eqref{eq:IDFMDiagonalizedSystem} of order not greater than one
with $(t,x)$-independent densities is spanned by a family of conserved currents $\mathrm C_1(\Omega)$, $\mathrm C_0$ and the
conserved currents $\mathrm{EHC}(\Psi)$ presented in Corollary~\ref{eq:IDFMDiagonalizedSystemHydrodynamicCLs}. Here
\begin{gather}\label{eq:IDFMFirstOrderCurrent}
\begin{split}
&\mathrm C_0=
\left(\left(\frac1{\ri^1_x}-\frac1{\ri^2_x}\right)e^{\ri^1-\ri^2},
\left(\frac{V^1}{\ri^1_x}-\frac{V^2}{\ri^2_x}\right)e^{\ri^1-\ri^2}\right),
\\[1ex]
&\mathrm C_1(\Omega)=\left(e^{\ri^1-\ri^2}\Omega, (\ri^1+\ri^2)e^{\ri^1-\ri^2}\Omega\right),
\end{split}
\end{gather}
and~$\Omega$ runs through the set of smooth functions of $\omega^0=\ri^3$ and $\omega^1=\ri^3_xe^{\ri^2-\ri^1}$.
\end{proposition}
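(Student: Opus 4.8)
The plan is to assemble the ingredients prepared in the preceding discussion into the claimed spanning set. By Doyle's theorem \cite[Theorem~5.1]{Doyle1994}, applied to the strictly hyperbolic semi-Hamiltonian system~\eqref{eq:IDFMDiagonalizedSystem}, every conserved current of order at most one whose density does not involve $(t,x)$ is equivalent to the sum of a hydrodynamic conserved current and a current of the general form displayed just before~\eqref{eq:IDFMFirstOrderCurrent}, built from the functions $G^1=e^{-\ri^2/2}$, $G^2=e^{\ri^1/2}$, $G^3=e^{\ri^1-\ri^2}$, the profiles $f^1(\ri^1)$, $f^2(\ri^2)$ constrained by $\sum_{i=1}^2(G^i)^2f^i=\const$, and an arbitrary $\Omega=\Omega(\ri^3,\ri^3_x/G^3)$. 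Thus it remains only to make this form explicit and to recognize its constituents.

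First I would substitute the solved profiles $f^1=ce^{\ri^1}$ and $f^2=-ce^{-\ri^2}$ into the general current. Using $(G^1)^2f^1=ce^{\ri^1-\ri^2}$, $(G^2)^2f^2=-ce^{\ri^1-\ri^2}$ and $G^3=e^{\ri^1-\ri^2}$, the density collapses to
\[
c\,e^{\ri^1-\ri^2}\!\left(\frac1{\ri^1_x}-\frac1{\ri^2_x}\right)+e^{\ri^1-\ri^2}\Omega,
\]
which is exactly $c$ times the density of~$\mathrm C_0$ plus the density of~$\mathrm C_1(\Omega)$; note that $\ri^3_x/G^3=\ri^3_xe^{\ri^2-\ri^1}=\omega^1$, so the argument of~$\Omega$ matches the stated one. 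Inserting $V^1_{\ri^1}=V^2_{\ri^2}=1$, the accompanying flux likewise separates into $c$ times the flux of~$\mathrm C_0$, the flux of~$\mathrm C_1(\Omega)$ (the summand $V^3G^3\Omega=(\ri^1+\ri^2)e^{\ri^1-\ri^2}\Omega$), and a residual term proportional to $x\,e^{\ri^1-\ri^2}(1/\ri^1_x-1/\ri^2_x)$.

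The delicate point, which I expect to be the main obstacle, is disposing of this residual $x$-dependent contribution to the flux. Since a conserved current is determined by its density only up to a trivial (null-divergence) current, I would verify that the difference between Doyle's flux and the representative flux recorded for~$\mathrm C_0$ in~\eqref{eq:IDFMFirstOrderCurrent} is such a trivial current, equivalently that the stated pair satisfies $\mathrm D_t\rho+\mathrm D_x\sigma=0$ on solutions of~\eqref{eq:IDFMDiagonalizedSystem}. This is the only step that requires checking the conservation identity directly rather than merely quoting the earlier reduction, and it is where the computation genuinely lives.

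Finally, I would restore the hydrodynamic part. By Corollary~\ref{eq:IDFMDiagonalizedSystemHydrodynamicCLs}, the space of hydrodynamic conserved currents is generated by $\mathrm{DHC}$ and $\mathrm{EHC}(\Psi)$; since $\mathrm{DHC}=\mathrm C_1(\ri^3)$ already belongs to the family $\{\mathrm C_1(\Omega)\}$ (take $\Omega$ depending on $\omega^0=\ri^3$ alone), only the currents $\mathrm{EHC}(\Psi)$ contribute new elements. Collecting the pieces, every conserved current of order at most one with $(t,x)$-independent density is a linear combination of $\mathrm C_1(\Omega)$, $\mathrm C_0$ and $\mathrm{EHC}(\Psi)$, which establishes the proposition.
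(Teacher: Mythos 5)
Your proposal is correct and is essentially the paper's own argument: the proposition is proved there precisely by the preceding application of Doyle's Theorem~5.1 with $G^1=e^{-\ri^2/2}$, $G^2=e^{\ri^1/2}$, $G^3=e^{\ri^1-\ri^2}$ and the constraint-forced profiles $f^1=ce^{\ri^1}$, $f^2=-ce^{-\ri^2}$, the proposition itself merely collecting the resulting currents. The two points you make explicit --- that the residual $x$-dependent flux term must be disposed of (indeed, a direct check shows the stated pair $\mathrm C_0$ satisfies $\mathrm D_t\rho+\mathrm D_x\sigma=0$ on solutions of~\eqref{eq:IDFMDiagonalizedSystem}, so it represents the same conservation law as the Doyle current with that density) and that $\mathrm{DHC}=\mathrm C_1(\ri^3)$ is absorbed into the family $\{\mathrm C_1(\Omega)\}$, leaving only $\mathrm{EHC}(\Psi)$ from the hydrodynamic generating set --- are left implicit in the paper but are correct.
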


Since the system~$\mathcal S$ is not genuinely nonlinear, Theorem~5.2 of~\cite{Doyle1994}
implies the following assertion.

\begin{proposition}
The system~$\mathcal S$ possesses a family of nontrivial conservation laws of arbitrarily high order
with conserved currents~$\mathrm C_1(\Omega)$
parameterized by an arbitrary function~$\Omega$ of
a finite, but unspecified number of $\omega^\iota=(e^{\ri^2-\ri^1}\mathrm D_x)^\iota\ri^3$, $\iota\in\mathbb N_0$.
\end{proposition}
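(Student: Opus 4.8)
The plan is to verify the conservation-law condition for $\mathrm C_1(\Omega)$ directly, reusing the operator machinery from the proof of Proposition~\ref{pro:IDFMDiagonalizedSystemGenSymsWithWk}, and to invoke Theorem~5.2 of~\cite{Doyle1994} for the structural nontriviality part. Writing $\rho=e^{\ri^1-\ri^2}\Omega$ and $\sigma=(\ri^1+\ri^2)e^{\ri^1-\ri^2}\Omega=V^3\rho$, I would first recast the total-divergence condition $\mathrm D_t\rho+\mathrm D_x\sigma=0$ in terms of the operator $\mathrm B=\mathrm D_t+V^3\mathrm D_x$ used in that earlier proof. Since $\mathrm D_xV^3=\ri^1_x+\ri^2_x$, a short computation reduces the divergence to $\mathrm D_t\rho+\mathrm D_x(V^3\rho)=\mathrm B\rho+(\ri^1_x+\ri^2_x)\rho$, so that everything reduces to evaluating $\mathrm B\rho$ on solutions of~\eqref{eq:IDFMDiagonalizedSystem}.

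The heart of the argument is this evaluation. From the system one reads off the differential consequences $\mathrm B\ri^1=-\ri^1_x$ and $\mathrm B\ri^2=\ri^2_x$, whence $\mathrm B(e^{\ri^1-\ri^2})=e^{\ri^1-\ri^2}(\mathrm B\ri^1-\mathrm B\ri^2)=-(\ri^1_x+\ri^2_x)e^{\ri^1-\ri^2}$ on solutions. The decisive point is that $\mathrm B\Omega=\sum_\iota\Omega_{\omega^\iota}\mathrm B\omega^\iota=0$ on solutions, because each equation $\mathrm B\omega^\iota=0$ is a differential consequence of~\eqref{eq:IDFMDiagonalizedSystem}; this is precisely what the commutator identity $[\mathrm B,\mathrm A]=(\mathrm B\ri^2-\ri^2_x-\mathrm B\ri^1-\ri^1_x)\mathrm A$ together with the vanishing of its coefficient on solutions establishes in Proposition~\ref{pro:IDFMDiagonalizedSystemGenSymsWithWk}. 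Combining the two identities yields $\mathrm B\rho=-(\ri^1_x+\ri^2_x)\rho$ on solutions, so that $\mathrm D_t\rho+\mathrm D_x\sigma=0$, confirming that $\mathrm C_1(\Omega)$ is a conserved current for every admissible~$\Omega$. Letting $\Omega$ depend nontrivially on $\omega^\kappa=(e^{\ri^2-\ri^1}\mathrm D_x)^\kappa\ri^3$, whose top-order contribution is $\mathrm D_x^\kappa\ri^3$, produces densities of order exactly~$\kappa$, and hence conservation laws of arbitrarily high order.

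The main obstacle is not this divergence identity, which is essentially algebraic, but the claim of \emph{nontriviality}: one must rule out that the high-order currents reduce, modulo total $x$-derivatives and the system, to lower-order or null ones. This is exactly where the hypothesis that $\mathcal S$ is not genuinely nonlinear, i.e.\ $V^3_{\ri^3}=0$, is indispensable, and where I would appeal to Theorem~5.2 of~\cite{Doyle1994}, which for a semi-Hamiltonian system with a linearly degenerate characteristic field guarantees precisely such an infinite hierarchy of independent nontrivial conservation laws; alternatively, nontriviality can be checked by verifying that the conservation-law characteristic obtained from~$\rho$ via the Euler operator does not vanish when $\Omega_{\omega^\kappa}\not\equiv0$. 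Finally, I would restate the result in the variables of~$\mathcal S$ through $\ri^1+\ri^2=u$, $\ri^1-\ri^2=v$ and $\ri^3=w$, under which $\mathrm A=e^{-v}\mathrm D_x$ and $\mathrm C_1(\Omega)=(e^v\Omega,\,ue^v\Omega)$.
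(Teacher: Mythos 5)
Your proposal is correct, but it takes a genuinely different route from the paper. The paper's entire ``proof'' of this proposition is the sentence immediately preceding it: since the system is not genuinely nonlinear ($V^3_{\ri^3}=0$, i.e.\ the third characteristic field is linearly degenerate), the assertion is obtained as a direct corollary of Theorem~5.2 of~\cite{Doyle1994}, with no computation performed at all. You instead verify the divergence identity by hand: writing $\rho=e^{\ri^1-\ri^2}\Omega$ and $\sigma=V^3\rho$, using $\mathrm B\ri^1=-\ri^1_x$, $\mathrm B\ri^2=\ri^2_x$ on solutions together with the differential consequences $\mathrm B\omega^\iota=0$ established in Proposition~\ref{pro:IDFMDiagonalizedSystemGenSymsWithWk}, you obtain $\mathrm B\rho=-(\ri^1_x+\ri^2_x)\rho$ and hence $\mathrm D_t\rho+\mathrm D_x\sigma=0$ on solutions; this computation is sound. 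Your route is in fact closer in spirit to Remark~\ref{rem:IDFMFamilyOfCLsWithOmegas}, where the authors observe that existence can be explained ``in a fashion different from~\cite{Doyle1994}'' through the action of the generalized symmetries $\check{\mathcal W}(\Omega)$ on the hydrodynamic conserved current $\mathrm{DHC}$ --- the same operator machinery you reuse, packaged structurally rather than as a direct check. What your approach buys is self-containedness and transparency: the role of linear degeneracy is made explicit (it is exactly what makes $[\mathrm B,\mathrm A]$ vanish on solutions), and existence requires nothing beyond results already proved in the paper. What it does not buy is nontriviality, which you correctly flag as the real content still owed; there you, like the paper, ultimately fall back on Doyle's Theorem~5.2, though your alternative suggestion --- checking that the conservation-law characteristic $\mathsf E(\rho)$ is nonzero when $\Omega_{\omega^\kappa}\not\equiv0$ --- would, if carried out, make the whole proof independent of~\cite{Doyle1994}. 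One small imprecision: the top-order term of $\omega^\kappa$ is $e^{\kappa(\ri^2-\ri^1)}\mathrm D_x^\kappa\ri^3$ rather than $\mathrm D_x^\kappa\ri^3$; this does not affect your conclusion that the density has order exactly~$\kappa$.
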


\begin{remark}\label{rem:IDFMFamilyOfCLsWithOmegas}
In a fashion different from~\cite{Doyle1994},
existence of higher-order conservation laws for the system~\eqref{eq:IDFMDiagonalizedSystem}
can be explained by actions of generalized symmetries $\check{\mathcal W}(\Omega)=\Omega\p_{\ri^3}$
on a hydrodynamic conserved current~$\mathrm{DHC}$,
see Proposition~\ref{pro:IDFMDiagonalizedSystemGenSymsWithWk}
and Corollary~\ref{eq:IDFMDiagonalizedSystemHydrodynamicCLs}.
This claim agrees with Proposition~\ref{pro:IDFMFirstOrderCurrent}.
\end{remark}

In fact, the system~\eqref{eq:IDFMDiagonalizedSystem} admits
another family of conservation laws of arbitrarily high order,
which essentially differs from the family of conserved currents $\mathrm C_1(\Omega)$
and is obtained by pulling back conservation laws of the essential subsystem
with the projection to~$(t,x,\ri^1,\ri^2)$.
In the next paper on the system~\eqref{eq:IDFMDiagonalizedSystem},
we shall exhaustively describe all local conservation laws of this system,
including $(t,x)$-dependent ones.

\section{Conclusions}\label{sec:IDFMConclusions}

In the present paper we have performed an extended symmetry analysis of the isothermal no-slip drift
flux model given by the system~\eqref{eq:IDFModel}. It turned out that the form~\eqref{eq:IDFModel} was not
convenient for the study within the symmetry framework. In particular, the maximal Lie invariance algebra
of the system~\eqref{eq:IDFModel} is difficult to compute even
using specialized computer algebra packages, e.g. DESOLVII~\cite{VuJeffersonCarminati2012}.
Therefore, transforming dependent variables we have represented the model in the form~\eqref{eq:IDFMModelForGroupAnalysis}.
This representation has allowed us to compute the maximal Lie invariance algebra~$\mathfrak g$
of the initial system. This algebra is infinite-dimensional. Note that the infinite-dimensional part of~$\mathfrak g$
spanned by~$\{\tilde{\mathcal W}(\tilde\Omega)\}$ was missed in~\cite{SekharSatapathy2016}.

Moreover, we have computed the complete point symmetry group of the system~$\mathcal S$
using the combined algebraic method. Since the algebra~$\mathfrak g$ is infinite-dimensional,
the straightforward application of the automorphism-based method is not appropriate.
For this reason we have chosen to employ its megaideal-based counterpart.
However, the fact that the finite-dimensional part of the algebra~$\mathfrak g$ coincides with its radical~$\mathfrak r$,
which is a megaideal, allowed us to partially use the automorphism-based version of the algebraic method as well.
To this end we have computed the entire automorphism group of the
radical~$\mathfrak r$ of~$\mathfrak g$.
Finally, we have employed the constraints derived from the algebraic method
to obtain the complete point symmetry group~\eqref{eq:IDFMSymmetryGroup}
of the isothermal no-slip drift flux model~$\mathcal S$ applying the direct method.

Following the standard Lie reduction procedure~\cite{Ovsiannikov1982}, we have also obtained
optimal lists of one- and two-dimensional subalgebras of the algebra~$\mathfrak g$,
which were employed for finding appropriate solution ansatzes, using which we have
found families of invariant and  partially invariant solutions of the system~$\mathcal S$.

Sections~\ref{sec:IDFMEssentialSubsystem}--\ref{sec:IDFMSolutionsViaEssentialSubsystem}
and~\ref{sec:IDFMGeneralizedHodographMethod} of the present paper provide
two alternative approaches to finding solutions of the system under study. The first of them
is based on the fact that the subsystem~$\mathcal S_0$ is partially decoupled from the rest of the system,
so we could construct its solutions first and then solve the remaining equation~\eqref{eq:IDFMEquation3} in
parametric form. This choice is justified by the fact that the system~$\mathcal S_0$ admits a larger Lie symmetry group
than the Lie symmetry group of~$\mathcal S$, and thus one can find more invariant or partially invariant
solutions. Moreover, one should keep in mind the well-known fact that every hydrodynamic-type system
in two independent variables, say, $x,t$, and two dependent variables, say, $u,v$,
can be linearized via the two-dimensional hodograph transformation
with~$(p,q)=(t,x)$, $(y,z)=(u,v)$ being the new dependent and independent variables, respectively.
It is worth noticing that upon having applied the above hodograph transformation to $\mathcal S_0$
the equation~\eqref{eq:IDFMEquation3} in the new variables remains
a first-order linear PDE in~$w$ which can be easily solved.
The above change of variables has led to the potential system~\eqref{eq:IDFMPotentialTelegraphEquation}
of the telegraph equation~\eqref{eq:IDFMTelegraphEquation}.
Using the subsequent change of variables $\tilde p=p\exp(-z/2)$ we obtained
the famous Klein--Gordon equation~\eqref{eq:IDFMKleinGordonEquation}.
As the latter equation is well studied, we could construct
large classes of exact solutions thereof, find~$p$ and solve equation~\eqref{eq:IDFMEquation3},
which allows us to find solutions of the initial system $\mathcal S$ in parametric form.

The second approach to finding solutions for~\eqref{eq:IDFMModelForGroupAnalysis} is
based on the semi-Hamil\-tonian property of~\eqref{eq:IDFMDiagonalizedSystem},
which has allowed us to employ the generalized hodograph transformation.
Just as for the previous method, the problem of finding solutions of the system~\eqref{eq:IDFMModelForGroupAnalysis}
was reduced to solving the Klein--Gordon equation.
Nevertheless, the solutions obtained via generalized hodograph transformations
are included in the list of solutions obtained via the decoupling of~\eqref{eq:IDFMModelForGroupAnalysis}.

We have computed the first-order generalized symmetries for the system~\eqref{eq:IDFMDiagonalizedSystem},
amongst which we found both evolutionary forms of point symmetries and genuinely generalized symmetries.
Interestingly, some of genuinely generalized symmetries are nothing but
evolutionary forms of point symmetries of the essential subsystem. Moreover, the system~$\mathcal S$ possesses
genuinely generalized symmetries beyond the latter, see Remark~\ref{rem:IDFMGeneralizedSymmetries}.

We have also constructed the entire space of zero-order conservation laws for the system~\eqref{eq:IDFMModelForGroupAnalysis},
which is spanned by a single non-translation-invariant conservation law
and the (infinite-dimen\-sional) space of hydrodynamic conservation laws of this system.
Note that while the existence of hydrodynamic-type systems with an infinite number of independent hydrodynamic conservation laws
was known for a long time~\cite{Nutku1987,OlverNutku1988},
hydrodynamic-type systems with non-hydrodynamic conservation laws are significantly less common,
cf.\ e.g.\ \cite{FerapontovSharipov1996} and references therein.
The space of hydrodynamic conservation laws of a diagonalizable semi-Hamiltonian system is parameterized
by~$n$ arbitrary functions of one variable~\cite{Tsarev1991}, where~$n$ is the number of dependent variables.

For the system~\eqref{eq:IDFMModelForGroupAnalysis} two of the arbitrary functions in question are determined by solutions
of the telegraph equation $\Phi_{uu}=\Phi_{vv}-\Phi_v$,
while the third function is an arbitrary function of~$w$, which can be reduced to~$1$ by actions of point symmetries
unless it is zero.
The first-order conservation laws with $(t,x)$-independent densities have also been classified.
Existence of higher-order conservation laws has been justified using the fact that system~\eqref{eq:IDFMDiagonalizedSystem}
is not genuinely nonlinear.
Such conservation laws have been found using the action of generalized symmetries
of the system~\eqref{eq:IDFMDiagonalizedSystem} of arbitrarily high order.

We found only generalized symmetries and conservation laws of the system~$\mathcal S$ of order not greater than one
as well as higher-order generalized symmetries and conservation laws related to the linearly degenerate part of the system~$\mathcal S$.

In general, it is quite a common situation when a hydrodynamic-type system does not possess
generalized symmetries of order greater than one.
The key observation facilitating the study of higher-order symmetries for the system~$\mathcal S$
is the fact that this system is only partially coupled,
and the essential subsystem~$\mathcal S_0$ of the system~$\mathcal S$ is linearized
by the hodograph transformation to the Klein--Gordon equation
for which all generalized symmetries are known~\cite{NikitinPrylypko1990a,OpanasenkoPopovych2018,ShapovalovShirokov1992}.
The main conjecture here is that the entire space of generalized symmetries of the system~$\mathcal S$ is spanned
by generalized symmetries from the family presented in Proposition~\ref{pro:IDFMDiagonalizedSystemGenSymsWithWk}
and prolongations of generalized symmetries of the essential subsystem~$\mathcal S_0$ to~$\ri^3$.
The further work is to figure out which of the generalized symmetries of~$\mathcal S_0$
are locally prolonged to generalized symmetries of the entire system~$\mathcal S$
and how this prolongation is performed.

Linearizability of the subsystem~$\mathcal S_0$
can also be applied to classification of all (local) conservation laws of the system~$\mathcal S$.
Our conjecture here is that the entire space of conservation laws of the system~$\mathcal S$ is spanned by
conservation laws from the family presented in Remark~\ref{rem:IDFMFamilyOfCLsWithOmegas}
and by those being pullbacks of conservation laws the essential subsystem~$\mathcal S_0$
by the projection to $(t,x,\ri^1,\ri^2)$.
Moreover, one does not need to prolong conservation laws of the essential subsystem~$\mathcal S_0$
to conservation laws of the entire system~$\mathcal S$,
which makes the problem easier than that of finding generalized symmetries.

It is readily verified that system~\eqref{eq:IDFMDiagonalizedSystem} admits
a one-parameter family of Hamiltonian operators of Dubrovin--Novikov type
(see e.g.\ the surveys \cite{DubrovinNovikov1989, Ferapontov1995} for more details on such operators)
with the parameter $\lambda$,
\begin{gather}
\begin{split}\label{eq:IDFMHamiltonianOperator}
\mathcal{P}_\lambda={}&e^{\ri^2-\ri^1}\mathop{\rm diag}\big(1,-1,\lambda e^{\ri^2-\ri^1}\big)\mathrm D_x
\\[1ex]
&+\frac12e^{\ri^2-\ri^1}
\left(\begin{array}{ccc}
\ri^2_x-\ri^1_x  &  \ri^1_x-\ri^2_x  &  -2\ri^3_x\\[.5ex]
\ri^2_x-\ri^1_x  &  \ri^1_x-\ri^2_x  &  -2\ri^3_x\\[.5ex]
2\ri^3_x         &  2\ri^3_x         &  -2\lambda e^{\ri^2-\ri^1}\left(\ri^1_x-\ri^2_x\right)
\end{array}\right),
\end{split}
\end{gather}
and the associated Hamiltonian representation
\[
\boldsymbol{\mathfrak{r}}_t=\mathcal{P}_\lambda (\delta H/\delta\boldsymbol{\ri}),
\]
where $\boldsymbol{\mathfrak{r}}=(\ri_1,\ri_2,\ri_3)^{\mathrm{T}}$, the Hamiltonian functional is given by
\[
H=-\frac14\int e^{\ri_1-\ri_2}\left((\ri_1+\ri_2)^2+2(\ri_1-\ri_2)\right)\,{\rm d}x,
\]
$\delta/\delta\boldsymbol{\mathfrak{r}}$ is the variational derivative,
and the integral is understood in the formal sense of calculus of variations;
cf.\ e.g.\ \cite[Chapter~4]{Olver1993} for the details on the latter.

The Hamiltonian operators \eqref{eq:IDFMHamiltonianOperator} can be formally inverted if $\lambda\neq 0$,
and in view of this and of the above the ratio $\mathcal{R}_{\mu\nu}=\mathcal{P}_\mu \circ\mathcal{P}_\nu^{-1}$ defines,
for $\nu\neq 0$, a hereditary recursion operator for~\eqref{eq:IDFMDiagonalizedSystem};
see e.g.\ \cite{Olver1993, AS17} and references therein for more details on recursion operators in general.
Alas, the leading term of $\mathcal{R}_{\mu\nu}-\mathbb{I}$, where $\mathbb{I}$ is the identity operator,
is a degenerate matrix,
which makes $\mathcal{R}_{\mu\nu}$ rather uninteresting
from the point of view of generating symmetries for~\eqref{eq:IDFMDiagonalizedSystem}.

In closing note that \eqref{eq:IDFMDiagonalizedSystem} also admits infinitely many Hamiltonian operators
that are not of Dubrovin--Novikov type, and a recursion operator which is free of the degeneracy of the above kind.
Also, it is well known that the Hamiltonian operators map cosymmetries (in particular, characteristics of conservation laws)
into symmetries, which could lead to an interesting interplay among the two for the system~\eqref{eq:IDFMDiagonalizedSystem},
and hence for $\mathcal{S}$.
All of this will be discussed in more detail in the sequel to the present paper.

\section*{Acknowledgments}

The authors thank Galyna Popovych for helpful discussions and interesting comments.
The authors are also grateful to the anonymous reviewer for valuable suggestions.
This research was undertaken, in part,
thanks to funding from the Canada Research Chairs program and the NSERC Discovery Grant program.
The research of ROP was supported by the Austrian Science Fund (FWF),
project P25064, and by ``Project for fostering collaboration in science, research and education''
funded by the Moravian-Silesian Region, Czech Republic.
ROP is also grateful to the project No.\ CZ.$02.2.69\/0.0/0.0/16\_027/0008521$
``Support of International Mobility of Researchers at SU'' which supports international cooperation.
The research of AS was supported in part by the Grant Agency of the Czech Republic (GA \v{C}R)
under grant P201/12/G028, and by the RVO funding for I\v{C}47813059.

\footnotesize
\frenchspacing

\end{document}